\documentclass[12pt]{amsart}

\usepackage{lipsum} 
\usepackage{amsfonts}
\usepackage{amsmath}
\usepackage{amssymb}
\usepackage{amsthm} 
\usepackage{mathrsfs}
\usepackage{xcolor}
\usepackage{appendix}

\usepackage[colorlinks=true,urlcolor=blue,linkcolor=blue,citecolor=black]{hyperref}

\usepackage{verbatim}

\usepackage[retainorgcmds]{IEEEtrantools}


\renewcommand{\AA}{\mathcal{A}}

\newcommand{\vei}{\left\vert}
\newcommand{\ved}{\right\vert}
\newcommand{\Vei}{\left\Vert}
\newcommand{\Ved}{\right\Vert}


\textwidth 7truein
\hoffset=-1truein
\voffset=-1truein
\textheight 9.3truein

\theoremstyle{definition}
\newtheorem{thm}{Theorem}[section] 
\newtheorem{cor}[thm]{Corollary}
\newtheorem{lem}[thm]{Lemma}
\newtheorem{prop}[thm]{Proposition}
\theoremstyle{definition}
\newtheorem{defn}[thm]{Definition}
\newtheorem{rem}[thm]{Remark}
\newtheorem{nota}[thm]{Notation}

\newtheorem{conj}[thm]{Conjecture}

\setcounter{tocdepth}{1}

\numberwithin{equation}{section}  

\title[On Operator-Valued Infinitesimal Boolean and Monotone Independence]{On Operator-Valued Infinitesimal\\ Boolean and Monotone Independence}

\author{Daniel Perales}
\email{dperales@uwaterloo.ca}
\address{Department of Mathematics\\
University of Waterloo}
\author{Pei-Lun Tseng}
\email{16plt@queensu.ca}
\address{Department of Mathematics and Statistics\\
Queen's University}

\begin{document}
\begin{abstract}

We introduce the notion of operator-valued infinitesimal (OVI) independence for the Boolean and monotone cases. Then show that OVI Boolean (resp. monotone) independence is equivalent to the operator-valued Boolean (resp. monotone) independence over an algebra of $2\times 2$ upper triangular matrices. Moreover, we derive formulas to obtain the OVI Boolean (resp. monotone) additive convolution by reducing it to the operator-valued case. 

We also define OVI Boolean and monotone cumulants and study its basic properties. Moreover, for each notion of OVI independence, we construct the corresponding OVI Central Limit Theorem. The relations among free, Boolean and monotone cumulants are extended to this setting. Besides, in the Boolean case we deduce that the vanishing of mixed cumulants is still equivalent to independence, and use this to connect scalar-valued with matrix-valued infinitesimal Boolean independence.  Finally we study two random matrix models that are asymptotically Boolean independent but turn out to not be infinitesimally Boolean independent.
\end{abstract}

\maketitle
\tableofcontents

\section{Introduction}

In non-commutative probability theory, the classical notion of independence is replaced by other analogue notions that are better suited for the non-commutative framework. If we ask these notions to satisfy certain canonical properties, we end up with only five types of independence (see \cite{Mur3}), which are tensor (or classical), free \cite{Voi}, Boolean \cite{SW}, monotone \cite{Mur1}, and anti-monotone.  Non-commutative probability spaces consist of a unital algebra $\mathcal{A}$ and a functional $\varphi:\mathcal{A}\to\mathbb{C}$ that maps the unit $1_\mathcal{A}$ to $1$. The functional $\varphi$ replaces the classical notion of expectation. Freeness was introduced by Voiculescu in the 1980s and since then many extensions and variations (such as Boolean and monotone) have been studied. Out of the many possible extensions, we want to focus on two, infinitesimal freenees and operator valued freeness.

For infinitesimal freeness, besides our probability space $(\mathcal{A},\varphi)$ we consider another linear functional $\varphi':\mathcal{A}\to\mathbb{C}$ satisfying $\varphi'(1_\mathcal{A})=0$. Such triple $(\mathcal{A},\varphi,\varphi')$ is called an infinitesimal non-commutative probability space. $\varphi'$ can be interpreted as the derivative of a continuous family of linear functionals $\{\varphi_t\}_{t\in T}$ for some set $T$ with $0$ as an accumulation point and $\varphi_0=\varphi$.  This notion was developed in \cite{BS}, (see also \cite{BGN}), and was used by Shlyakhtenko \cite{Shl} to understand the finite rank perturbations in some random matrix models. Infinitesimal freeness can be studied applying free probability tools in $2\times 2$ upper triangular matrices, see for instance \cite{BGN} and \cite {BS}, or equivalently using the two-dimensional Grassman algebra, see \cite{FN}. The infinitesimal analogue of Boolean and monotone independences was studied in \cite{Has}.

On the other hand, operator-valued freeness (or freeness with amalgamation) parallels the classical concept of conditional independence. Here we first consider a unital subalgebra $\mathcal{B}$ of $\mathcal{A}$, and then replace $\varphi$ with a $\mathcal{B}$-bimodule linear map $E:\mathcal{A}\to \mathcal{B}$. This structure was first introduced by D. Voiculescu in \cite{voi3}, and this theory was successfully applied by Belinschi, Mai, and Speicher \cite{BMS} to describe the limit behavior of rational functions of some random matrix models via linearization trick. Boolean and monotone independences in the operator-valued framework have been also studied, see for instance \cite{popa2008combinatorial}, \cite{popa2009new}, \cite{HS14} and references therein. 

A natural question is what happens when we study infinitesimal freeness in an operator-valued framework. This was addressed by Curran and Speicher \cite{cur1}, where a random matrix model of asymptotic infinitesimal freeness with amalgamation is also provided. In \cite{tseng2019operator}, the second author established a connection between the notion of operator-valued infinitesimal freeness and freeness over an algebra of $2\times 2$ upper triangular matrices. Using this characterization, the author gave a explicit formula to find the operator-valued infinitesimal free additive convolution using a infinitesimal version of the Cauchy transform.

The main purpose of this paper is to investigate the notions of infinitesimal Boolean and monotone independence in the operator-valued framework. This is mainly done by following the development for the free case already done by the second author in \cite{tseng2019operator}. Since we are in a operator-valued infinitesimal (OVI) framework we will consider a probability space of the form $(\mathcal{A},\mathcal{B},E,E')$ where $\mathcal{B}$ is a unital subalgebra of $\mathcal{A}$ and $E,E':\mathcal{A}\to\mathcal{B}$ are linear and satisfy some other basic relations that are made precise in Definition \ref{def.ovips}. In this setting it is easy to extend the notion of Boolean and monotone independence. Then, we consider the upper triangular probability space $(\widetilde{\mathcal{A}},\widetilde{\mathcal{B}},\widetilde{E})$ associated to $(\mathcal{A},\mathcal{B},E,E')$ and show that operator-valued Boolean (respectively monotone) independence in the former is equivalent to operator-valued infinitesimal Boolean (respectively monotone) independence in the latter (see Proposition \ref{p1} and Proposition \ref{p2}). Moreover these results allow us to compute the OVI Boolean  and monotone additive convolution, by reducing the problem to the operator-valued framework. Specifically, given two infinitesimal Boolean independent selfadjoint random variables $x,y$ in a $C^*$-operator-valued infinitesimal probability space, the infinitesimal Cauchy transform $g_{x+y}$ can be written in terms of the inifinitesimal Cauchy transform ($g_x$, $g_y$) and the $F$-transform ($F_x$, $F_y$) of each $x$ and $y$ (we refer the reader to Section \ref{sec:preliminary} for the precise definitions of all these notions).

\begin{thm}
\label{Thm.Boolean.add.conv}
Suppose that $(\mathcal{A},\mathcal{B},E,E')$ is a $C^*$-operator-valued infinitesimal probability space, and let $x,y\in\mathcal{A}$ be self-adjoint and infinitesimally Boolean independent over $\mathcal{B}$. Then for all $b\in H^+(\mathcal{B})$,   
\begin{equation*}
g_{x+y}(b) = ( F_x(b)+F_y(b)-b)^{-1} \Big( F_x(b)g_x(b)F_x(b) + F_y(b)g_y(b)F_y(b) \Big) (F_x(b)+F_y(b)-b)^{-1}. 
\end{equation*}
\end{thm}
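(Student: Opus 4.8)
The plan is to deduce the identity entirely from the operator-valued Boolean additive convolution formula applied \emph{inside} the associated upper triangular probability space, paralleling the treatment of the free case in \cite{tseng2019operator}. Recall first that in the genuinely $C^*$ operator-valued Boolean setting, if $u,v$ are self-adjoint and Boolean independent over a subalgebra $\mathcal{B}_0$, then the operator-valued $F$-transforms satisfy $F_{u+v}(b)=F_u(b)+F_v(b)-b$ for all $b\in H^+(\mathcal{B}_0)$; equivalently, the Boolean energy transform $b\mapsto b-F_u(b)$ is additive under Boolean convolution. I will take this identity as known (see Section~\ref{sec:preliminary}).

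Since $x$ and $y$ are infinitesimally Boolean independent over $\mathcal{B}$, Proposition~\ref{p1} tells us that, viewed as elements of $\widetilde{\mathcal{A}}$, they are Boolean independent over $\widetilde{\mathcal{B}}$ in $(\widetilde{\mathcal{A}},\widetilde{\mathcal{B}},\widetilde{E})$. Applying the operator-valued Boolean convolution formula in that space gives $\widetilde{F}_{x+y}(\widetilde b)=\widetilde{F}_x(\widetilde b)+\widetilde{F}_y(\widetilde b)-\widetilde b$. We then specialize $\widetilde b$ to the image of $b\in H^+(\mathcal{B})$ under the diagonal embedding $\mathcal{B}\hookrightarrow\widetilde{\mathcal{B}}$ and expand in the nilpotent $\varepsilon$. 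Writing a generic element of $\widetilde{\mathcal{A}}$ as $a_0+\varepsilon a_1$ with $\varepsilon^2=0$ and $\widetilde{E}(a_0+\varepsilon a_1)=E(a_0)+\varepsilon\big(E'(a_0)+E(a_1)\big)$, a first-order resolvent expansion yields
\[
\widetilde{G}_x(b)\;=\;\widetilde{E}\big((b-x)^{-1}\big)\;=\;G_x(b)+\varepsilon\, g_x(b),
\]
and, using that $G_x(b)$ is invertible together with $(c_0+\varepsilon c_1)^{-1}=c_0^{-1}-\varepsilon\, c_0^{-1}c_1 c_0^{-1}$,
\[
\widetilde{F}_x(b)\;=\;F_x(b)-\varepsilon\, F_x(b)g_x(b)F_x(b),
\]
and similarly for $y$ and for $x+y$ (with $G_{x+y}(b)$ and its infinitesimal counterpart $g_{x+y}(b)$).

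Substituting these three expansions into $\widetilde{F}_{x+y}(b)=\widetilde{F}_x(b)+\widetilde{F}_y(b)-b$ and comparing coefficients: the $\varepsilon^0$ part reproduces the scalar-level identity $F_{x+y}(b)=F_x(b)+F_y(b)-b$, while the $\varepsilon^1$ part gives
\[
F_{x+y}(b)\, g_{x+y}(b)\, F_{x+y}(b)\;=\;F_x(b)g_x(b)F_x(b)+F_y(b)g_y(b)F_y(b).
\]
Since $b\in H^+(\mathcal{B})$, the operator $F_{x+y}(b)=G_{x+y}(b)^{-1}$ is invertible and, by the $\varepsilon^0$ identity, equals $F_x(b)+F_y(b)-b$; multiplying the last display on both sides by its inverse yields exactly the claimed formula.

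The point requiring care is the meaning of the analytic transforms over $\widetilde{\mathcal{B}}$: the upper triangular algebra is not a $C^*$-algebra, so both ``$\widetilde b\in H^+(\widetilde{\mathcal{B}})$'' and the $F$-transform there have to be interpreted appropriately — either through the identification of $\widetilde{E}$ with the (formal) derivative of a family of conditional expectations, or by first establishing the $\varepsilon$-expansions and the Boolean convolution identity as formal power series in the resolvent for $\|b^{-1}\|$ small and then extending to $H^+(\mathcal{B})$ by analyticity. This bookkeeping is precisely what Proposition~\ref{p1} together with the preliminary material on the infinitesimal Cauchy and $F$-transforms supplies; granting it, the computation above is routine.
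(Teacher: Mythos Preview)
Your proof is correct and follows essentially the same approach as the paper: pass to the upper triangular space via Proposition~\ref{p1}, apply the operator-valued Boolean $F$-transform identity there, and read off the $\varepsilon$-coefficient (equivalently, the $(1,2)$-entry) after inverting $\widetilde G_x$. The only cosmetic differences are that you use the dual-number notation $a_0+\varepsilon a_1$ and specialize immediately to the diagonal embedding $c=0$, whereas the paper carries a general off-diagonal entry $c$ in the $2\times 2$ matrix and then observes that the $c$-dependent terms cancel via the derivative of the $\varepsilon^0$ identity.
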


For the monotone convolution we have a similar result. In this case we write $g_{x+y}$ in terms of $g_x,g_y,F_x$ and $G'_x$. The latter being the derivative of the Cauchy transform of $x$.

\begin{thm}
\label{Thm.monotone.add.conv}
Suppose that $(\mathcal{A},\mathcal{B},E,E')$ is a $C^*$-operator-valued infinitesimal probability space, and let $x,y\in\mathcal{A}$ be self-adjoint and infinitesimally monotone independent over $\mathcal{B}$. Then for all $b\in H^+(\mathcal{B})$, 
$$
g_{x+y}(b) = G_x'(F_y(b))(-F_y(b)g_y(b)F_y(b))+g_x(F_y(b)).
$$
\end{thm}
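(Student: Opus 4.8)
The plan is to transfer the statement to the associated upper-triangular operator-valued probability space $(\widetilde{\mathcal{A}},\widetilde{\mathcal{B}},\widetilde{E})$ and deduce it from the ordinary (non-infinitesimal) operator-valued monotone additive convolution identity, which in the monotone case reads $\widetilde{G}_{\widetilde{x}+\widetilde{y}}=\widetilde{G}_{\widetilde{x}}\circ\widetilde{F}_{\widetilde{y}}$, where $\widetilde{F}_{\widetilde{y}}(b)=\widetilde{G}_{\widetilde{y}}(b)^{-1}$ is the reciprocal Cauchy transform (recalled in Section~\ref{sec:preliminary}). I will write an element of $\widetilde{\mathcal{B}}$ in terms of its scalar part $b\in\mathcal{B}$ and its infinitesimal part $b'\in\mathcal{B}$, abbreviated $b+\varepsilon b'$ with $\varepsilon^{2}=0$; recall that $\widetilde{E}$ sends the element of $\widetilde{\mathcal{A}}$ with parts $(a,a')$ to the element of $\widetilde{\mathcal{B}}$ with parts $(E(a),\,E(a')+E'(a))$, and that $x,y$ embed into $\widetilde{\mathcal{A}}$ with zero infinitesimal part, so that $\widetilde{x}+\widetilde{y}$ is the lift of $x+y$. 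By Proposition~\ref{p2}, $\widetilde{x}$ and $\widetilde{y}$ are operator-valued monotone independent over $\widetilde{\mathcal{B}}$, so the convolution identity applies to them.

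The crucial point is a ``dictionary'' between $\widetilde{G}$ and the pair $(G_z,g_z)$. For self-adjoint $z\in\mathcal{A}$ and $b\in H^+(\mathcal{B})$, expanding the resolvent in $\widetilde{\mathcal{A}}$ as $(\widetilde{b}-\widetilde{z})^{-1}=(b-z)^{-1}-\varepsilon\,(b-z)^{-1}b'(b-z)^{-1}$ and applying $\widetilde{E}$ gives
\[
\widetilde{G}_{\widetilde{z}}(\widetilde{b})=G_{z}(b)+\varepsilon\big(G_{z}'(b)[b']+g_{z}(b)\big),
\]
where $G_z'(b)[b']$ is the directional derivative of $G_z$ at $b$ in the direction $b'$ (obtained by differentiating $E[(b-z)^{-1}]$) and $g_z(b)=E'[(b-z)^{-1}]$. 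Taking $b'=0$ yields $\widetilde{G}_{\widetilde{z}}(b)=G_z(b)+\varepsilon\,g_z(b)$; inverting in $\widetilde{\mathcal{B}}$,
\[
\widetilde{F}_{\widetilde{y}}(b)=\big(G_y(b)+\varepsilon\,g_y(b)\big)^{-1}=F_y(b)-\varepsilon\,F_y(b)\,g_y(b)\,F_y(b),
\]
and since $F_y(b)\in H^+(\mathcal{B})$ (a standard property of the reciprocal Cauchy transform) this element lies in the domain of $\widetilde{G}_{\widetilde{x}}$.

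Then I would substitute: evaluating $\widetilde{G}_{\widetilde{x}+\widetilde{y}}=\widetilde{G}_{\widetilde{x}}\circ\widetilde{F}_{\widetilde{y}}$ at $b$ and expanding both sides via the dictionary (using on the left that $\widetilde{x}+\widetilde{y}$ is the lift of $x+y$),
\[
G_{x+y}(b)+\varepsilon\,g_{x+y}(b)=G_{x}\!\big(F_y(b)\big)+\varepsilon\Big(G_{x}'\!\big(F_y(b)\big)\big[-F_y(b)g_y(b)F_y(b)\big]+g_{x}\!\big(F_y(b)\big)\Big).
\]
Equating scalar parts recovers the ordinary operator-valued monotone formula $G_{x+y}=G_x\circ F_y$, and equating infinitesimal parts yields exactly
\[
g_{x+y}(b)=G_x'\!\big(F_y(b)\big)\big(-F_y(b)g_y(b)F_y(b)\big)+g_x\!\big(F_y(b)\big),
\]
which is the claim.

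The step that requires the most care is the dictionary of the second paragraph: one must justify that $G_x$ is Fr\'echet differentiable on $H^+(\mathcal{B})$ with derivative computed by differentiating the resolvent under $E$, verify the resolvent expansion and the action of $\widetilde{E}$ so that precisely the combination $G_x'(b)[b']+g_x(b)$ appears in the infinitesimal slot, and check the relevant domain statements---in particular that $\widetilde{F}_{\widetilde{y}}(b)$ lies in $H^+(\widetilde{\mathcal{B}})$ so that the composition $\widetilde{G}_{\widetilde{x}}\circ\widetilde{F}_{\widetilde{y}}$ is legitimate. Once this transfer is established, the remainder is bookkeeping with the nilpotent $\varepsilon$, together with Proposition~\ref{p2} and the known operator-valued monotone convolution formula.
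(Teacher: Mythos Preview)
Your proposal is correct and follows essentially the same strategy as the paper: both invoke Proposition~\ref{p2} to pass to the upper triangular space $(\widetilde{\mathcal{A}},\widetilde{\mathcal{B}},\widetilde{E})$, apply the known operator-valued monotone convolution formula there, and read off the infinitesimal part. The only cosmetic differences are that the paper uses explicit $2\times 2$ matrices and the $F$-transform identity $F_{X+Y}=F_X\circ F_Y$ (inverting afterwards), whereas you use the equivalent $G$-form $\widetilde{G}_{\widetilde{x}+\widetilde{y}}=\widetilde{G}_{\widetilde{x}}\circ\widetilde{F}_{\widetilde{y}}$ and the $\varepsilon$-notation, which spares one layer of inversion.
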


Recall that given $(\mathcal{A},\varphi,\varphi')$, a infinitesimal non-commutative probability space, $\varphi$ and $\varphi'$ can be seen as limits of a family of linear functionals $\{\varphi_t\}_{t\in T}$ and its derivatives. This prompted the use of differentiable paths $\mu(t)$ to study the infinitesimal Cauchy transform and infinitesimal free additive convolution (see details in \cite{BS}). This idea was generalized to study the OVI free additive convolution in \cite{tseng2019operator}. We devote Section \ref{sec:differentiable.paths} to study the OVI Boolean and monotone additive convolution using the differential paths approach, the main result using this method can be found in Theorem \ref{thm.diff.paths.main}. 

Regarding the combinatorial approach to freeness, the fundamental tool to study the additive convolution are the free cumulants, introduced by Speicher in \cite{Spe94}. The corresponding notions of Boolean and monotone cumulants were studied long ago (see \cite{SW} and \cite{HS1}). Explicit relations among Boolean, free and monotone cumulants were studied in detail in \cite{AHLV}. These relations have proven to be important in the study of non-commutative probability theory, (see for instance \cite{BN2}). The appropriate notions of cumulants for each of free, Boolean and monotone cases have been already defined in the operator-valued probability setting (see \cite{Spe98}, \cite{popa2009new} \cite{HS14}), as well as in the infinitesimal probability case (see \cite{FN}, \cite{Has}). The relations among them in the infinitesimal case were studied in \cite{celestino2019relations}. And the relation between Boolean and free cumulants in the operator valued setting can be found in \cite{ABFN}.

In this paper, we define the notion of Boolean and monotone cumulants in the OVI framework and study its basic properties. For the Boolean case, the fundamental property of vanishing of mixed Boolean cumulants still holds (see Theorem \ref{thm3}) and this allows to carry over scalar-valued infinitesimal Boolean independence into operator-valued infinitesimal Boolean independence. 

\begin{thm}
\label{thm.relation.inf.boo.matrices}
Suppose that $(\mathcal{M},\varphi,\varphi')$ is an infinitesimal probability space. If the sets 

$\{a^{(1)}_{i,j}\colon1\le i,j\le N\},\dots, \{a^{(k)}_{i,j}\colon1\le i,j\le N\}$ are infinitesimally Boolean in $\mathcal{M}$, then the elements $[a^{(1)}_{i,j}]_{i,j=1}^N,\dots, [a^{(k)}_{i,j}]_{i,j=1}^N$ are infinitesimally Boolean in $M_N(\mathcal{M})$ over $M_N(\mathbb{C})$.
\end{thm}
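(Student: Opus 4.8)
The plan is to reduce the assertion to the characterization of operator-valued infinitesimal (OVI) Boolean independence by the vanishing of mixed cumulants, namely Theorem~\ref{thm3}. Equip $M_N(\mathcal{M})$ with $E := \mathrm{id}_N \otimes \varphi$ and $E' := \mathrm{id}_N \otimes \varphi'$, so that $(M_N(\mathcal{M}), M_N(\mathbb{C}), E, E')$ is an OVI probability space in the sense of Definition~\ref{def.ovips}. Writing $A^{(\ell)} := [a^{(\ell)}_{i,j}]_{i,j=1}^N$, Theorem~\ref{thm3} says that $A^{(1)},\dots,A^{(k)}$ are infinitesimally Boolean over $M_N(\mathbb{C})$ if and only if
\begin{equation*}
\beta^{M_N(\mathbb{C})}_n\big(A^{(m_1)} D_1, \dots, A^{(m_{n-1})} D_{n-1}, A^{(m_n)}\big) = 0 = (\beta')^{M_N(\mathbb{C})}_n\big(A^{(m_1)} D_1, \dots, A^{(m_{n-1})} D_{n-1}, A^{(m_n)}\big)
\end{equation*}
for every $n\ge 1$, all $D_1,\dots,D_{n-1}\in M_N(\mathbb{C})$, and all $m_1,\dots,m_n\in\{1,\dots,k\}$ that are not all equal. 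So the proof reduces to computing these $M_N(\mathbb{C})$-valued cumulants.

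The heart of the argument is a \emph{matrix cumulant formula} expressing the $M_N(\mathbb{C})$-valued (infinitesimal) Boolean cumulants of the $A^{(\ell)}$ entrywise through the scalar (infinitesimal) Boolean cumulants of the entries:
\begin{equation*}
\Big[\beta^{M_N(\mathbb{C})}_n\big(A^{(m_1)}, \dots, A^{(m_n)}\big)\Big]_{i,j} = \sum_{p_1,\dots,p_{n-1}=1}^{N} \beta_n\big(a^{(m_1)}_{i,p_1}, a^{(m_2)}_{p_1,p_2}, \dots, a^{(m_n)}_{p_{n-1},j}\big),
\end{equation*}
together with the analogous identity for $(\beta')^{M_N(\mathbb{C})}_n$, whose right-hand side is a fixed integer-linear combination of the scalar cumulants $\beta_n$ and $\beta'_n$ of the entries, dictated by the Leibniz rule built into the OVI Boolean moment--cumulant relation. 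The general form with scalar matrices $D_\ell$ inserted then follows immediately by $\mathbb{C}$-multilinearity of all cumulants involved, since $(AD)_{i,j} = \sum_\ell (A)_{i,\ell}(D)_{\ell,j}$. I would prove the matrix cumulant formula by induction on $n$: the case $n=1$ is the definition of $E$ and $E'$; for the inductive step one notes that $[E(A^{(m_1)}\cdots A^{(m_n)})]_{i,j} = \sum_{p_1,\dots,p_{n-1}} \varphi\big(a^{(m_1)}_{i,p_1}\cdots a^{(m_n)}_{p_{n-1},j}\big)$ (and similarly for $E'$ after applying Leibniz), substitutes this into the operator-valued Boolean moment--cumulant relation over interval partitions, and applies M\"obius inversion. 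Because an interval partition factors as a product over its blocks and matrix multiplication chains adjacent indices, the internal-index sums are compatible across blocks, and the recursion defining $\beta^{M_N(\mathbb{C})}_n$ (resp.\ $(\beta')^{M_N(\mathbb{C})}_n$) matches, summand by summand, the corresponding scalar recursion.

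With the formula in hand the conclusion is quick. Viewing $(\mathcal{M},\varphi,\varphi')$ as the OVI probability space $(\mathcal{M}, \mathbb{C}1_{\mathcal{M}}, \varphi, \varphi')$ --- whose notion of Boolean independence over $\mathbb{C}1_{\mathcal{M}}$ coincides with the scalar notion of infinitesimal Boolean independence used in the statement --- Theorem~\ref{thm3} applied with $\mathcal{B} = \mathbb{C}1_{\mathcal{M}}$ tells us that all mixed scalar cumulants $\beta_n$ and $\beta'_n$ of the sets $\{a^{(1)}_{i,j}\}_{i,j},\dots,\{a^{(k)}_{i,j}\}_{i,j}$ vanish. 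Hence, when $m_1,\dots,m_n$ are not all equal, each scalar cumulant on the right-hand side of the matrix cumulant formula (and of its $\beta'$-analogue) is built from entries of at least two distinct sets, so it is a mixed cumulant and equals $0$; summing over $p_1,\dots,p_{n-1}$ shows that the matrix-valued mixed cumulant $\beta^{M_N(\mathbb{C})}_n(A^{(m_1)}D_1,\dots,A^{(m_n)})$ and its infinitesimal counterpart vanish. By Theorem~\ref{thm3} this is exactly the infinitesimal Boolean independence of $[a^{(1)}_{i,j}]_{i,j=1}^N,\dots,[a^{(k)}_{i,j}]_{i,j=1}^N$ over $M_N(\mathbb{C})$ in $M_N(\mathcal{M})$.

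The main obstacle is establishing the matrix cumulant formula cleanly, particularly its infinitesimal part: one has to keep the $M_N(\mathbb{C})$-bimodule structure straight while passing to entries and to know the precise Leibniz-type shape of the OVI Boolean moment--cumulant relation. Everything after that --- the entrywise vanishing and the appeal to Theorem~\ref{thm3} --- is routine.
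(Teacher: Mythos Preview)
Your approach is essentially the paper's: both reduce to Theorem~\ref{thm3} by establishing the entrywise matrix cumulant formulas $[\beta^{M_N(\mathbb{C})}_n(A^{(m_1)},\dots,A^{(m_n)})]_{i,j}=\sum_{p_1,\dots,p_{n-1}}\beta_n(a^{(m_1)}_{i,p_1},\dots,a^{(m_n)}_{p_{n-1},j})$ and its infinitesimal analogue (which, incidentally, involves only $\beta'_n$ on the right, not a genuine mix of $\beta_n$ and $\beta'_n$), and then invoke the vanishing of mixed scalar cumulants. The only difference is cosmetic --- the paper obtains the infinitesimal formula by citing \cite[Lemma~11]{tseng2019operator} for the entrywise form of $\partial E_{\pi,V}$ and summing over interval partitions, rather than by your proposed direct induction on $n$.
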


Using the vanishing of infinitesimal Boolean cumulants (restricted to the scalar case) we study two random matrix models that yield asymptotic Boolean independence, an thus were natural candidates to yield asymptotic infinitesimal Boolean independence. However, it turns out this is not the case and neither of the models achieve infinitesimal independence. Computations for each model can be found in the Appendix.

Once we have the concepts of free, Boolean and monotone OVI cumulants, we can study the relations among them. In principle, there is no reason why the same formulas as in the scalar-valued case should hold, even for the operator-valued framework. However, after proving some technical results (See Lemma \ref{lemma10.Tseng} and Lemma \ref{Lemma.useful}) we can observe that the relations just rely on the underlying combinatorial structure, rather than in the intricate way to define operator-valued cumulants. Thus everything works smoothly and we can parallel the previous approaches to get operator valued version. Moreover, using the associated upper triangular space we can derive the relations in the OVI case too. 

\begin{thm}
\label{Thm.relation.cumulants}
Suppose that $(\mathcal{A},\mathcal{B},E,E')$ is an OVI probability space, and let $\{ r^{\mathcal{B}}_n\}_{n \ge 1}$, $\{\beta^{\mathcal{B}}_n\}_{n \ge 1}$, and $\{h^{\mathcal{B}}_n\}_{n \ge 1}$ be the families of free, Boolean and monotone OV cumulants, respectively. Also let $\{\partial r^{\mathcal{B}}_n\}_{n \ge 1}$, $\{\partial \beta^{\mathcal{B}}_n\}_{n \ge 1}$, and $\{\partial h^{\mathcal{B}}_n\}_{n \ge 1}$ be the families of free, Boolean and monotone OVI cumulants, respectively. Then, the following relations among them hold for every $n$:
\begin{IEEEeqnarray}{rclrcl}
     	\partial\beta^{\mathcal{B}}_n
		&=& \sum_{\pi\in \mathcal{NC}_{irr}(n)} \partial r^{\mathcal{B}}_\pi, \label{OVIBooFree} \hspace{3cm} &
		\partial r^{\mathcal{B}}_n 
		&=& \sum_{\pi\in \mathcal{NC}_{irr}(n)} (-1)^{|\pi|-1} \partial \beta^{\mathcal{B}}_\pi, \label{OVIFreeBoo}\\
	\partial \beta^{\mathcal{B}}_n  
		&=& \sum_{\pi\in \mathcal{NC}_{irr}(n)} \frac{1}{\tau(\pi)!}\partial h^{\mathcal{B}}_\pi, \label{OVIBooMon} &
	\partial r^{\mathcal{B}}_n  
		&=& \sum_{\pi\in \mathcal{NC}_{irr}(n)} \frac{(-1)^{|\pi|-1}}{\tau(\pi)!}\partial h^{\mathcal{B}}_\pi, \label{OVIFreeMon}\\
		\partial	h^{\mathcal{B}}_n  
		&=& \sum_{\pi\in \mathcal{NC}_{irr}(n)} \omega(\pi) \partial \beta^{\mathcal{B}}_\pi, \label{OVIMonBoo} &
	\partial h^{\mathcal{B}}_n  
		&=& \sum_{\pi\in \mathcal{NC}_{irr}(n)} (-1)^{|\pi|-1}\omega(\pi) \partial r^{\mathcal{B}}_\pi, \label{OVIMonFree}  
    \end{IEEEeqnarray}
where $\mathcal{NC}_{irr}(n)$ is the set of non-crossing irreducible paritions, and $\tau(\pi)!$, $\omega(\pi)$ are just some real values assigned to each non-crossing partition that are explicitly defined in \eqref{eq.tree.factorial} and \eqref{eq.muruaomega}, respectively.
\end{thm}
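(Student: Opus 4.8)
The plan is to prove the six identities first in the ordinary operator-valued setting and then transport them to the OVI setting through the associated upper triangular probability space. For the operator-valued step, working in a space $(\mathcal{A},\mathcal{B},E)$ I would verify that the scalar-valued relations of \cite{AHLV} among the families $\{r_n^{\mathcal{B}}\}$, $\{\beta_n^{\mathcal{B}}\}$, $\{h_n^{\mathcal{B}}\}$ hold verbatim, with the same combinatorial coefficients $\tau(\pi)!$ and $\omega(\pi)$. The substance here is Lemma~\ref{lemma10.Tseng} and Lemma~\ref{Lemma.useful}: they say that each of the three families of operator-valued cumulants is recovered from the moment maps $E[\cdots]$ by summing the corresponding nested functionals $r_\pi^{\mathcal{B}}$, $\beta_\pi^{\mathcal{B}}$, $h_\pi^{\mathcal{B}}$ over the appropriate lattice of non-crossing partitions, and that these nested functionals factor over the blocks of $\pi$ exactly as in the scalar case. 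Granting this, the incidence-algebra and Möbius-inversion arguments of \cite{AHLV} go through unchanged: one derives $\beta_n^{\mathcal{B}}=\sum_{\pi\in\mathcal{NC}_{irr}(n)}r_\pi^{\mathcal{B}}$ and $\beta_n^{\mathcal{B}}=\sum_{\pi\in\mathcal{NC}_{irr}(n)}\tfrac{1}{\tau(\pi)!}h_\pi^{\mathcal{B}}$ from the defining recursions and obtains the remaining four relations by inverting these on the lattice of irreducible non-crossing partitions. (The operator-valued free--Boolean pair already appears in \cite{ABFN}.)

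Next, given an OVI probability space $(\mathcal{A},\mathcal{B},E,E')$, I would pass to $(\widetilde{\mathcal{A}},\widetilde{\mathcal{B}},\widetilde{E})$, recalling that on the copy of $\mathcal{A}$ inside $\widetilde{\mathcal{A}}$ the conditional expectation $\widetilde{E}$ sends $a$ to the upper triangular matrix with both diagonal entries equal to $E(a)$ and upper-right entry $E'(a)$. The key structural observation is that for each $\kappa\in\{r,\beta,h\}$ the operator-valued $\widetilde{\mathcal{B}}$-cumulant $\widetilde{\kappa}_n^{\widetilde{\mathcal{B}}}$ of such arguments is again upper triangular, with diagonal $\kappa_n^{\mathcal{B}}$ and upper-right entry the OVI cumulant $\partial\kappa_n^{\mathcal{B}}$ --- this is either the definition of the OVI cumulants used in the paper, or follows by induction on the moment--cumulant recursion, since $\widetilde{E}$ has this shape and products of such $2\times2$ matrices reproduce the Leibniz rule obeyed by $\partial E=E'$. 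The same induction, applied blockwise along the non-crossing structure, shows that the nested functional $\widetilde{\kappa}_\pi^{\widetilde{\mathcal{B}}}$ is upper triangular with diagonal $\kappa_\pi^{\mathcal{B}}$ and upper-right entry $\partial\kappa_\pi^{\mathcal{B}}$, where $\partial\kappa_\pi^{\mathcal{B}}$ is the sum over blocks $V\in\pi$ of $\kappa_\pi^{\mathcal{B}}$ with the $V$-factor replaced by $\partial\kappa_{|V|}^{\mathcal{B}}$, i.e.\ the quantity in the statement.

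The conclusion is then immediate: apply the operator-valued relations of the first step inside $(\widetilde{\mathcal{A}},\widetilde{\mathcal{B}},\widetilde{E})$. Each has the form $\widetilde{\kappa}_n^{\widetilde{\mathcal{B}}}=\sum_{\pi\in\mathcal{NC}_{irr}(n)}c(\pi)\,\widetilde{\lambda}_\pi^{\widetilde{\mathcal{B}}}$ with $\kappa,\lambda\in\{r,\beta,h\}$ and $c(\pi)$ one of the scalars $1,(-1)^{|\pi|-1},\tfrac{1}{\tau(\pi)!},\tfrac{(-1)^{|\pi|-1}}{\tau(\pi)!},\omega(\pi),(-1)^{|\pi|-1}\omega(\pi)$. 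Since $c(\pi)$ is a scalar it commutes through the $2\times2$ matrix multiplication, so by the previous paragraph the $(1,1)$ entry of this identity is the operator-valued relation and its $(1,2)$ entry reads $\partial\kappa_n^{\mathcal{B}}=\sum_{\pi\in\mathcal{NC}_{irr}(n)}c(\pi)\,\partial\lambda_\pi^{\mathcal{B}}$, which is precisely one of \eqref{OVIBooFree}--\eqref{OVIMonFree}. Running through the six choices of $(\kappa,\lambda,c)$ finishes the proof; alternatively, once \eqref{OVIBooFree} and \eqref{OVIBooMon} are established this way, the remaining four follow by the same inversion on $\mathcal{NC}_{irr}$ as in the first step, now performed over $\widetilde{\mathcal{B}}$ --- the route taken in \cite{celestino2019relations} for the scalar infinitesimal case.

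I expect the main obstacle to be the structural claim of the second step: showing that an operator-valued cumulant over $\widetilde{\mathcal{B}}$ of an upper triangular variable is itself upper triangular with exactly the prescribed entries, and that the $(1,2)$ corner of a \emph{nested} product of such matrices is the derivation $\partial(\,\cdot\,)_\pi$ in the statement rather than something else. This is where the Leibniz rule must be checked against the nesting of operator-valued cumulants, which is more delicate than in the scalar case because inner blocks enter as arguments of --- and get multiplied against --- the outer variables rather than being multiplied together; Lemma~\ref{lemma10.Tseng} and Lemma~\ref{Lemma.useful} are what isolate this compatibility, and once they are in hand the rest is mechanical bookkeeping with $2\times2$ matrices.
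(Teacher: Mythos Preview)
Your proposal is correct and follows essentially the same route as the paper: first establish the six relations at the operator-valued level (the paper does this in Proposition~\ref{Prop.relation.boofree.cumulants}, Proposition~\ref{Prop.relation.cumulants}, and Corollary~\ref{cor.OV.mon.boofree}, using Lemma~\ref{Lemma.useful} exactly as you anticipate), then pass to $(\widetilde{\mathcal{A}},\widetilde{\mathcal{B}},\widetilde{E})$, apply the OV relations there, and read off the $(1,2)$-entry via Lemma~\ref{lemma10.Tseng}. Your identification of the ``main obstacle'' --- verifying that $\widetilde{\kappa}_\pi^{\widetilde{\mathcal{B}}}$ has the upper-triangular form with $(1,2)$-entry $\partial\kappa_\pi^{\mathcal{B}}$ --- is precisely the content of Lemma~\ref{lemma10.Tseng}, which the paper invokes at exactly this point.
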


Besides this introductory section, this paper has five other sections. Section \ref{sec:preliminary} contains the basic notions of Operator-Valued and OVI probabity spaces, freeness and free cumulants. In Section \ref{sec:Boolean.Indep} we deal with the Boolean case, we define of OVI independence and cumulants, study its basic properties and prove Theorem \ref{Thm.Boolean.add.conv} and Theorem  \ref{thm.relation.inf.boo.matrices}. The same is done for the monotone case in Section \ref{sec:Monotone.indep}, were we prove Theorem \ref{Thm.monotone.add.conv}.

In Section \ref{sec:InfCLT}, for OVI free, Boolean, and monotone independences, we construct the corresponding OVI Central Limit Theorem (Theorem \ref{OVICLT}). Furthermore, we provide another approach to study the scalar-valued Infinitesimal Central Limit Theorem, see Subsection \ref{ssec:SVICLT}.

In Section \ref{sec:Relation.cumulants} we study the relation among OVI cumulants and prove Theorem \ref{Thm.relation.cumulants}. Finally, in Section \ref{sec:differentiable.paths} we use the differentiable paths approach to study boolean and monotone convolution. Appendix \ref{sec:antitrace.model} and Appendix \ref{sec:partal.trace.model} cover the study of two random matrix models that although being asymptotic Boolean independent, they are not asymptotically infintesimally Boolean independent.\\

Acknowledgements:  The authors would like to thank professors Belinschi, Mingo and Nica for their valuable comments. The first author expresses his gratitude to CONACyT (Mexico) for its support via the scholarship 714236.\\

\section{Preliminaries}
\label{sec:preliminary}


\subsection{Notions of Operator-Valued Non-Commutative Independences}

$\ $

\noindent Let $\mathcal{A}$ be a unital algebra, and $\mathcal{B}$ be a unital subalgebra of $\mathcal{A}$. A linear map $E:\mathcal{A}\rightarrow \mathcal{B}$ is a \emph{conditional expectation} if 
\begin{equation*}
E(b)=b \text{ and }  E(bab')=bE(a)b' 
\end{equation*}
for all  $b,b'\in\mathcal{B}$ and $a\in \mathcal{A}.$ Then the triple $(\mathcal{A},\mathcal{B},E)$ is called an \emph{operator-valued probability space}.
For a given random variable $x\in \mathcal{A}$,  the \emph{operator-valued distribution} of $x$ is the set of all operator-valued moments 
$$
E(xb_1xb_2\cdots b_{n-1}xb_nx)\in\mathcal{B}
$$
where $n\in\mathbb{N}$ and $b_1,\cdots,b_n\in \mathcal{B}$, and we denote it by $\mu_x$ (see \cite{voi3}).  
\begin{defn}
Let $(\mathcal{A},\mathcal{B},E)$ be an operator-valued probability space.
\begin{itemize}
  \item [(1)] 
  Subalgebras $(\mathcal{A}_i)_{i\in I}$ of $\mathcal{A}$ that contain $\mathcal{B}$ is \emph{freely independent} over $\mathcal{B}$ if for all $n\in \mathbb{N}$, $a_1,\dots, a_n\in \mathcal{A}$ are such that $a_j\in\mathcal{A}_{i_j}$ where $i_1,\dots,i_n\in I$, $i_1\neq \cdots \neq i_n$, and $E(a_j)=0$ for all $j$, then      
  $$
  E(a_1\cdots a_n) = 0.
  $$
  \item [(2)]
  Subalgebras $(\mathcal{A}_i)_{i\in I}$ of $\mathcal{A}$ that contain $\mathcal{B}$ is \emph{Boolean independent} over $\mathcal{B}$ if for all $n\in \mathbb{N}$, $a_1,\dots, a_n\in \mathcal{A}$ are such that $a_j\in\mathcal{A}_{i_j}$ where $i_1,\dots,i_n\in I$, $i_1\neq \cdots \neq i_n$, then
  $$
  E(a_1\cdots a_n)=E(a_1)\cdots E(a_n).
  $$
  \item [(3)]
  Assume $I$ is equipped with a linear order $<$. Subalgebras $(\mathcal{A}_i)_{i\in I}$ of $\mathcal{A}$ that contain $\mathcal{B}$ is \emph{monotone independent} over $\mathcal{B}$ if  
  $$
  E(a_1\cdots a_{j-1}a_ja_{j+1}\cdots a_n) = E(a_1\cdots a_{j-1}E(a_j)a_{j+1}\cdots a_n) 
  $$
  whenever $a_j\in\mathcal{A}_{i_j}$, $i_1,\dot,i_n\in I$ and $i_{j-1}<i_j>i_{j+1}$ where one of the inequalities is eliminated if $j=1$ or $j=n$.
\end{itemize}
\end{defn}
Given an operator-valued probability space $(\mathcal{A},\mathcal{B},E)$, $(x_i)_{i \in I}$ in $\mathcal{A}$ are said to be freely independent (resp, Boolean, monotone) over $\mathcal{B}$ if the unital algebras $alg(1,x_i)$ generated by elements $x_i$ are freely independent (resp, Boolean, monotone). 

If $x,y\in\mathcal{A}$ are freely (resp, Boolean, monotone) independent, then the distribution of $x+y$ only depends on $\mu_x$ and $\mu_y$, we shall call it \emph{free (resp Boolean, monotone) additive convolution} of $\mu_x$ and $\mu_y$, and denoted it by $\mu_x\boxplus \mu_y$ (resp, $\mu_x\uplus\mu_y,\  \mu_x\triangleright\mu_y$). 

From now on, given a operator-valued probability space $(\mathcal{A},\mathcal{B},E)$, we will equip it with an analytic structure. Assume $\mathcal{A}$ is a unital $C^*$-algebra, $\mathcal{B}$ is a unital $C^*$-subalgebra of $\mathcal{A}$, and $E:\mathcal{A}\rightarrow \mathcal{B}$ is a linear, completely positive, and conditional expectation map. Such triple $(\mathcal{A},\mathcal{B},E)$ is called a \emph{$C^*$-operator-valued probability space}. For a positive and invertible random variable $x\in\mathcal{A}$ we denote it by $x>0$, and then the \emph{operator upper-half plane} is defined by 
$$
H^+(\mathcal{B}) =\{ b\in \mathcal{B} \mid \operatorname{Im}(b)=\frac{1}{2i}(b-b^*)>0 \}.
$$
We note that if $x\in \mathcal{A}$ is self-adjoint, then $b-x$ is invertible for all $b\in H^+(\mathcal{B}).$ Now, for a fixed $x=x^*\in \mathcal{A}$, the \emph{Cauchy transform} of $x$ is defined by 
$$
G_x(b)= E [(b-x)^{-1}]
$$
for all $b\in \mathcal{B}$ so that $b-x$ is invertible. Note that $G_{x}$ is a holomorphic map that sends $H^{+}(\mathcal{B})$ into $H^-(\mathcal{B}):=-H^+(\mathcal{B})$. 
For each $n\in \mathbb{N}$, if we consider its fully matricial extension $G_x^{(n)}$ which is defined by
$$
G_{x}^{(n)}(b)=E\otimes 1_{n}[(b-x\otimes 1)^{-1}]
$$
for $b\in M_n(\mathcal{B})$ that $b-x\otimes 1$ is invertible, then it is known that the sequence $\{G_x^{(n)}\}_{n=1}^\infty$ encodes the distribution $\mu_x$. Note that $G_x^{(n)}$ maps the upper half plane $H^+(M_n(\mathcal{B}))$ into the lower half plane $H^-(M_n(\mathcal{B}))$, and $G_x^{(n)}$ on $H^+(M_n(\mathcal{B}))$ basically has the same behavior of $G_x=G_x^{(1)}$ on $H^+(\mathcal{B})$. We shall restrict our analysis on to $G_x$.  

Now, for a given selfadjoint random variable $x\in \mathcal{A}$, we define the following transforms:
\begin{eqnarray*}
&\text { the \emph{$R$-transofrm} of } x:& R_x(b)= G^{\langle -1 \rangle}_x(b)-b^{-1}  ; \\ 
&\text { the \emph{$F$-transform} of } x:& F_x(b)= G_x(b)^{-1}  ; \\
&\text { the \emph{$B$-transform} of } x:& B_x(b)= b-F_x(b)   
 \end{eqnarray*}
 where $G^{\langle -1 \rangle}$ is the compositional inverse of $G_x$ in a neighborhood of the origin. Then we have the following results. \\
If $x$ and $y$ are freely independent over $\mathcal{B}$, then for all $b$ in a neighborhood of origin, we have 
$$
R_{x+y}(b)=R_x(b)+R_y(b)\qquad \text{ (see \cite{voi3})}.
$$
If $x$ and $y$ are Boolean independent over $\mathcal{B}$, then for all $b\in H^+(\mathcal{B})$, we have 
\begin{equation}\label{Beqn}
B_{x+y}(b)=B_x(b)+B_y(b)\qquad \text{ (see \cite{popa2009new})}. 
\end{equation}
If $x$ and $y$ are monotone independent over $\mathcal{B}$, then for all $b\in H^+(\mathcal{B})$, we have 
\begin{equation} \label{Meqn}
F_{x+y}(b)=F_x(F_y(b))\qquad \text{ (see \cite{popa2008combinatorial})}. 
\end{equation}
Note that these formulas also hold in the algebraic setting, and we will apply \eqref{Beqn} and \eqref{Meqn} to deduce the infinitesimal convolutions in the next two sections.   
\subsection{Notions of Operator-Valued Cumulants}

$\ $

\noindent We will denote by $NC(n)$ the set of non-crossing partitions of $[n]:=\{1,2,\dots,n\}$, namely, partitions $\pi=\{V_1,\dots,V_r\}$ such that $V_i$ and $V_j$ do not cross for any $1\leq i,j\leq r$ (see \cite{NS}). The \emph{blocks} of $\pi$ are $V_1,\dots,V_r$ and the \emph{size} (in this case $r$) of the partition is denoted by $|\pi|$. We will used the partial order in $NC(n)$ given by the reverse-refinement, namely $\sigma\leq \pi$ if every block of $\sigma$ is contained in some block of $\pi$. We denote by $I(n)$ the subset of $NC(n)$ that consists of all the interval partitions of $[n]$. For a detailed introduction to non-crossing partitions we refer the reader to \cite{NS}.

\begin{defn}
\label{partitions.multiplicative}
Let $\{f_n\}_{n\geq 1}$ be a sequence of multilinear maps: $f_n:\mathcal{A}^n\to \mathcal{B}$. Then for every $\pi\in NC(n)$ we define the maps $f_\pi: \mathcal{A}^n\to \mathcal{B}$ recursively as follows:
\begin{enumerate}
    \item For $\pi=1_n$, we set $f_\pi=f_n$.
    \item For $\pi\in NC(n)$ we pick $V=\{l+1,\dots l+k\}\in \pi$ a interval block of $\pi$ (it always exist one) and set
    $$f_\pi(x_1,\dots,x_n)= f_{\pi'}(x_1,\dots,x_l f_k(x_{l+1},\dots,x_{l+k}),x_{l+k+1},\dots,x_n),$$
    where $\pi'=\pi\backslash V\in NC(n-k)$ is the partition obtained by deleting from $\pi$ the block $V$.
\end{enumerate}
It can be seen that $f_\pi$ does not depend on how we pick the interval block $V$. A family $\{f_\pi\}_{\pi\in NC}$ created in this way is called \emph{multiplicative}.
\end{defn}

\begin{nota}
We denote by $\{E_\pi\}$ the multiplicative family obtained using above construction with the sequence $\{E_n\}_{n\geq 1}$ where $E_n(x_1,\dots,x_n):=E(x_1\cdots x_n)$. 
\end{nota}

\begin{rem}
\label{rem.spei.mob}
Recall from \cite{Spe98} that if $\{f_\pi\}_{\pi\in NC}$ is a multiplicative family of $\mathcal{B}$-bimodule maps, and we define a new family 
$$g_\pi= \sum_{\substack{\sigma\in NC(n)\\ \sigma\leq \pi}} f_\sigma, \qquad \forall \pi \in NC(n).$$
Then the family $\{g_\pi\}_{\pi\in NC(n)}$ is also multiplicative. Moreover, Möbius inversion formula tell us that previous formula is equivalent to
$$f_\pi=\sum_{\substack{\sigma\in NC(n)\\ \sigma\leq\pi}} Mob(\sigma,\pi) g_\sigma,$$
where $Mob$ is the Möbius function on the lattice $NC(n)$. It also holds that multiplicativity of $\{g_\pi\}_{\pi\in NC(n)}$ implies multiplicativity of $\{f_\pi\}_{\pi\in NC(n)}$. This formula is important since a sum over non-crossing partitions is what links moments and free cumulants as we will see below. In Lemma \ref{Lemma.useful} we obtain similar formulas to deal with Boolean and monotone cumulants.
\end{rem}

\begin{defn}
The \emph{operator valued free cumulants} $\{r^{\mathcal{B}}_n:\mathcal{A}^n\to\mathcal{B}\}_{n\geq 1}$, the \emph{operator valued Boolean cumulants} $\{\beta^{\mathcal{B}}_n:\mathcal{A}^n\to\mathcal{B}\}_{n\geq 1}$ and the  \emph{operator valued monotone cumulants} $\{h^{\mathcal{B}}_n:\mathcal{A}^n\to\mathcal{B}\}_{n\geq 1}$ are recursively defined via the following moment-cumulant formulas:
\begin{equation}
\label{mom-free}
    E_n(a_1,\dots, a_n)= \sum_{\pi \in NC(n)} r_\pi^{\mathcal{B}}(a_1,\dots,a_n), \qquad \forall n\geq 1, a_1,\dots, a_n\in \mathcal{A},
\end{equation}

\begin{equation}
\label{mom-boo}
E_n(a_1,\dots, a_n)= \sum_{\beta \in I(n)} \beta_\pi^{\mathcal{B}}(a_1,\dots,a_n), \qquad \forall n\geq 1, a_1,\dots, a_n\in \mathcal{A},
\end{equation}

\begin{equation}
\label{mom-mon}
E_n(a_1,\dots, a_n)= \sum_{\beta \in NC(n)} \frac{1}{\tau(\pi)!} h_\pi^{\mathcal{B}}(a_1,\dots,a_n), \qquad\forall n\geq 1, a_1,\dots, a_n\in \mathcal{A},
\end{equation}
where $\tau(\pi)!$ is the tree factorial of the tree associated to the partition $\pi$, and can be defined directly in terms of $\pi$ as follows:
\begin{equation}
\label{eq.tree.factorial}
\tau(\pi)!:= \prod_{V\in\pi} \vei\Big\{ W\in \pi: W\subset [\min(V),\max(V)]\Big\}\ved,
\end{equation}
That is, for every block of $V\in\pi$ we count all the blocks nested inside $V$ (including it) and we multiply all these numbers.
\end{defn}

\begin{rem}
Equivalently, we can define the free and Boolean cumulants via the formulas
\begin{equation}
\label{free-mom}
r^{\mathcal{B}}_\pi(x_1,\dots,x_n)=\sum_{\pi\in NC(n)} Mob(\pi,1_n) E_\pi(x_1,\dots,x_n), \qquad \forall n\geq 1,  x_1,\dots, x_n\in \mathcal{A},
\end{equation}

\begin{equation}
\label{boo-mom}
\beta_\pi^{\mathcal{B}}(x_1,\dots,x_n)=\sum_{\pi\in NC(n)} (-1)^{|\pi|-1} E_\pi(x_1,\dots,x_n),  \qquad \forall n\geq 1,  x_1,\dots, x_n\in \mathcal{A},
\end{equation}
where $Mob$ is the Möbius function in the lattice $NC(n)$, and similarly, the coefficient $(-1)^{|\pi|-1}$ is obtained when performing Möbius inversion on the lattice $I(n)$. We should mention that there is no known direct formula to define monotone cumulants, however the fact they are uniquely defined is clear from the recursion that defines them. The original definition is different from the given above, but the formulas are equivalent, (see Theorem 3.4 in \cite{HS14}).
\end{rem}

Similar to the scalar-valued case, the operator-valued cumulants are very useful when studying the Central Limit Theorem (CLT). A detailed exposition on this result can be found in \cite{BMV} and \cite{Jek18}. In terms of cumulantss, this result can be concisely stated as follows.

\begin{thm}[Operator-Valued Central Limit Theorem]
\label{OVCLT}
Let $(\mathcal{A},\mathcal{B},E)$ be a $C^*$-operator-valued probability space and let $a,a_1,a_2,\dots$ be a sequence of self adjoint element identically distributed elements of $\AA$. By this we mean that $E(a_ib_1a_ib_2\cdots a_ib_{k-1}a_i)=E(ab_1ab_2\cdots ab_{k-1}a)$ for all $b_1,\dots,b_{k-1}\in \mathcal{B}$. Moreover we assume that the variables are centered, $E(a)=0$, and the their common variance is given by $\eta(b):=E(aba)$ for all $b\in \mathcal{B}$. If $\{a_i\}_{i=1}^{\infty}$ are free, Boolean or monotone independent, then as $N\to \infty$, the elements
\[
S_N=\frac{1}{\sqrt{N}}(a_1+a_2+\cdots+a_N)
\]
converge in distribution to an element $c$, whose respective operator-valued free, Boolean or monotone cumulants $\kappa\in\{r,b,h\}$ are all zero $\kappa_n(cb_1,\dots,cb_n,c)=0$, unless $n=2$ in which case we have 
\[
\kappa_2(cb,c)=\eta(b).
\]
\end{thm}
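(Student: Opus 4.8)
The plan is to pass from moments to cumulants, use the vanishing of mixed cumulants (in the free and Boolean cases) or the linearization of monotone convolution (in the monotone case), and then run the standard scaling estimate. Write $\kappa_n^{\mathcal{B}}$ for whichever of $r_n^{\mathcal{B}},\beta_n^{\mathcal{B}},h_n^{\mathcal{B}}$ matches the prescribed notion of independence. Since each of the moment–cumulant formulas \eqref{mom-free}, \eqref{mom-boo}, \eqref{mom-mon} expresses an operator-valued moment as a \emph{finite} universal polynomial in the cumulants, it suffices to prove that for every $n\ge 1$ and all $b_1,\dots,b_{n-1}\in\mathcal{B}$ the cumulant $\kappa_n^{\mathcal{B}}(S_N b_1,\dots,S_N b_{n-1},S_N)$ converges as $N\to\infty$: the limiting moments are then the ones prescribed by the limiting cumulants, and, as these are exactly the operator-valued semicircular (resp.\ Bernoulli, resp.\ arcsine) distribution with covariance $\eta$, they are realized by a genuine element $c$ in a suitable $C^*$-operator-valued probability space (positivity being preserved under the limit, and $\eta$ being completely positive because $E$ is).

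Next I would compute $\kappa_n^{\mathcal{B}}(S_N b_1,\dots,S_N b_{n-1},S_N)$. Expanding each slot by multilinearity, with $S_N b_j=N^{-1/2}\sum_{i}a_i b_j$, gives
\[
\kappa_n^{\mathcal{B}}(S_N b_1,\dots,S_N)=N^{-n/2}\sum_{i_1,\dots,i_n=1}^{N}\kappa_n^{\mathcal{B}}(a_{i_1}b_1,\dots,a_{i_{n-1}}b_{n-1},a_{i_n}).
\]
In the free case each $a_{i_j}b_j$ lies in $\mathrm{alg}(1,a_{i_j})\supseteq\mathcal{B}$, so by vanishing of mixed free cumulants the summand is $0$ unless $i_1=\cdots=i_n$; the same holds in the Boolean case, mixed Boolean cumulants vanishing (equivalently, the additivity of the $B$-transform \eqref{Beqn}). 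By identical distribution $\kappa_n^{\mathcal{B}}(a_i b_1,\dots,a_i)=\kappa_n^{\mathcal{B}}(a b_1,\dots,a)$ for each $i$, and there are $N$ diagonal tuples, so
\[
\kappa_n^{\mathcal{B}}(S_N b_1,\dots,S_N)=N^{1-n/2}\,\kappa_n^{\mathcal{B}}(a b_1,\dots,a b_{n-1},a).
\]
In the monotone case mixed cumulants need not vanish, but $a_1+\cdots+a_N$ has the $N$-fold monotone convolution of $\mu_a$ as its distribution, and operator-valued monotone cumulants linearize monotone convolution \cite{HS14}; together with multilinear homogeneity this gives the very same identity $N^{1-n/2}h_n^{\mathcal{B}}(ab_1,\dots,a)$.

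It remains to let $N\to\infty$ in $N^{1-n/2}\kappa_n^{\mathcal{B}}(ab_1,\dots,a)$. For $n=1$, $\kappa_1^{\mathcal{B}}(\,\cdot\,)=E(\,\cdot\,)$ in all three theories, so the term is $N^{1/2}E(a)=0$. For $n=2$ the exponent is $0$, so the limit is $\kappa_2^{\mathcal{B}}(ab,a)$; expanding $E(aba)$ over the two partitions of $[2]$ via \eqref{mom-free}, \eqref{mom-boo} or \eqref{mom-mon}, the non-full partition contributes $\kappa_1^{\mathcal{B}}(ab)\,\kappa_1^{\mathcal{B}}(a)=E(ab)E(a)=0$, whence $\kappa_2^{\mathcal{B}}(ab,a)=E(aba)=\eta(b)$. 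For $n\ge 3$ the exponent $1-\tfrac{n}{2}$ is negative while $\kappa_n^{\mathcal{B}}(ab_1,\dots,a)\in\mathcal{B}$ is fixed, so $N^{1-n/2}\kappa_n^{\mathcal{B}}(ab_1,\dots,a)\to 0$. Thus every cumulant of the limit $c$ vanishes except $\kappa_2^{\mathcal{B}}(cb,c)=\eta(b)$, which is the assertion.

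The only non-routine ingredient is the additivity/linearization statement for the cumulants: for free and Boolean it is elementary and is recalled above, but for monotone it genuinely rests on the (non-elementary) fact from \cite{HS14} that operator-valued monotone cumulants are additive along monotone convolution, and this is the step I would expect to be the main obstacle in a fully self-contained treatment. A purely moment-theoretic alternative — expanding $E(S_N b_1\cdots b_{n-1}S_N)=N^{-n/2}\sum_{i_1,\dots,i_n}E(a_{i_1}b_1\cdots a_{i_n})$, using each independence rule to discard tuples containing an index of multiplicity one, and checking that only $O(N^{n/2})$ ``pair-type'' tuples survive — reaches the same limits but requires more bookkeeping.
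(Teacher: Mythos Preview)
The paper does not prove this theorem; it is stated in the preliminaries as a known result with references to \cite{BMV} and \cite{Jek18}, and is then used as a black box in the proof of Theorem~\ref{OVICLT}. So there is no paper-proof against which to compare directly.

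Your argument is the standard cumulant-scaling proof and is correct. In fact the paper itself carries out precisely this computation in the scalar-valued setting (proof of Proposition~\ref{prop.scalar.CLT}): it writes $\kappa_n(S_N,\dots,S_N)=N^{1-n/2}\kappa_n(a_1,\dots,a_1)$ for $\kappa\in\{r,\beta,h\}$, lets $N\to\infty$, and reads off that only $\kappa_2$ survives. Your identification of the monotone additivity from \cite{HS14} as the only non-elementary ingredient is accurate; the paper likewise treats this as given (it is built into the recursive definition \eqref{mom-mon} via \cite{HS14}).
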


\begin{rem}
\label{rem.CLT.Banach}
The proof of OV CLT do not requires the use of positivity, hence Theorem \ref{OVCLT} still holds if we assume that $\mathcal{A}$ is just a Banach algebra rather than a $C^*$-algebra.
\end{rem}

Although the result is very similar for the three types of independence, the specific law of $c$ in each case is determined by the moment cumulant formula and the variance $\eta$. We recollect this in the following notation and definition.

\begin{nota}
We denote by $\mathcal{{NC}}_2(k)$ the set of non-crossing pair partitions of $[k]$. Let a partition $\pi \in \mathcal{NC}_2(k)$ and let $\eta:\mathcal{B}\to\mathcal{B}$ be a function which can be obtained as a variance of some variable $a$, namely $\eta(b)=E(aba)$, the map $\eta_\pi:\mathcal{B}^{n-1}\to\mathcal{B}$ is defined as $$\eta_{\pi}(b_1,b_2,\cdots,b_{k-1}):=E_{\pi}(ab_1,ab_2,\cdots,ab_{k-1},a).$$ 
Notice that since $\pi$ is a pairing, by definition $E_\pi$ breaks the product into pairs of the form $E(aba)$, where $\pi$ just indicates the way in which the pairs are nested. 
\end{nota}

\begin{defn}
\label{defn.OV.central.laws}
Let $\mathcal{B}\langle X \rangle$ denote the free algebra generated by an indeterminate variable $X$ over $\mathcal{B}$. The \emph{distribution} or \emph{law}, $\nu$, of a variable $c$, is the linear map $\nu:\mathcal{B}\langle X \rangle\to \mathcal{B}$ completely determined by 
\[ 
\nu(b_0Xb_1Xb_2\cdots b_{k-1}Xb_k)= E(b_0cb_1cb_2\cdots b_{k-1}cb_k).
\]
We are going to denote by $\nu_\eta$ the law of the central limit $c$ with variance $\eta$ from Theorem \ref{OVCLT}. We know that it is always symmetric, namely $\nu_{\eta}(b_0Xb_1Xb_2\cdots b_{k-1}Xb_k) = 0$ when $k$ is odd, and the even moments for each independence are the following.
\begin{itemize}
    \item [(1)] Free case: $c$ has the centered \emph{$\mathcal{B}$-valued semi-circle law} with variance $\eta$, whose even moments are
    \begin{equation*}
        \nu_{\eta}(b_0Xb_1Xb_2\cdots b_{k-1}Xb_k) =\sum\limits_{\pi\in\mathcal{NC}_2(k)}b_0\eta_{\pi}(b_1,\cdots,b_{k-1})b_k, \qquad \text{ for all even } k\in\mathbb{N}.
       \end{equation*} 
    \item [(2)] Boolean case: $c$ has the centered \emph{$\mathcal{B}$-valued Bernoulli law} with variance $\eta$, whose even moments are 
    \begin{equation*}
        \nu_{\eta}(b_0Xb_1Xb_2\cdots b_{k-1}Xb_k) =  b_0\eta(b_1)b_2\eta(b_2)\cdots \eta(b_{k-1})b_k, \qquad  \text{ for all even } k\in\mathbb{N}.
    \end{equation*}
    \item [(3)] Monotone case: $c$ has the centered \emph{$\mathcal{B}$-valued arcsine law} with variance $\eta$, whose even moments are 
    \begin{equation*}
        \nu_{\eta}(b_0Xb_1Xb_2\cdots b_{k-1}Xb_k) =\sum\limits_{\pi\in\mathcal{NC}_2(k)}\frac{1}{\tau(\pi)!}b_0\eta_{\pi}(b_1,\cdots,b_{k-1})b_k,  \qquad \text{ for all even } k\in\mathbb{N}.
    \end{equation*}
\end{itemize}

\end{defn}

\subsection{Operator-Valued Infinitesimal Probability Spaces} 

$\ $

\noindent In this section we extend the concept of operator-valued probability space to the infinitesimal setting, we will use the abbreviation OVI to refer to the operator-valued infinitesimal setting.

\begin{defn}
 We call $(\mathcal{A},\mathcal{B},E,E')$ an \emph{operator-valued infinitesimal probability space} if $(\mathcal{A},\mathcal{B},E)$ is an operator-valued probability space, and $E':\mathcal{A}\rightarrow \mathcal{B}$ is a linear map such that 
$$
E'(1)=0 \text{ and } E'(bab')=bE'(a)b'
$$
for all $b,b'\in\mathcal{B}$ and $a\in \mathcal{A}$. 

$(\mathcal{A},\mathcal{B},E,E')$ is further called a \emph{$C^*$-operator-valued infinitesimal probability space} if $(\mathcal{A},\mathcal{B},E)$ is a $C^*$-operator-valued probability space and $E':\mathcal{A}\rightarrow \mathcal{B}$ is a linear, $\mathcal{B}$-bimodule, selfadjoint map that is completely bounded. 
\end{defn}
\begin{defn}
For a given $C^*$-operator-valued infinitesimal probability space $(\mathcal{A},\mathcal{B},E,E')$ and $x=x^*\in \mathcal{A}$, the \emph{infinitesimal Cauchy transform} of $x$ is defined by
$$
g_x(b)=E'[(x-b)^{-1}]
$$
for all $b\in\mathcal{B}$ such that $b-x$ is invertible. 
\end{defn}

Then, we can extend the concept of freeness with amalgamation to the OVI setting.

\begin{defn}
\label{def.ovips}
Suppose that $(\mathcal{A},\mathcal{B},E,E')$ is an OVI probability space, unital subalgebras $(\mathcal{A}_i)_{i\in I}$ of $\mathcal{A}$ that contains $\mathcal{B}$ are called \emph{infinitesimally free} over $\mathcal{B}$ if for $n\in \mathbb{N}$ and $i_1,\dots, i_n\in I$, $i_1\neq \cdots \neq i_n$ and $a_j\in\mathcal{A}_{i_j}$ with $E(a_j)=0$ for all $j=1,\dots, n$, the following two conditions hold: 
\begin{eqnarray*}
E(a_1\cdots a_n)&=& 0\ ; \\
E'(a_1\cdots a_n) &=& \sum\limits_{j=1}^n E(a_1\cdots a_{j-1}E'(a_j)a_{j+1}\cdots a_n). 
\end{eqnarray*}
\end{defn}

To handle the OVI setting, given a unital algebra $\mathcal{A}$, we will constantly use its associated \emph{upper triangular algebra}
\begin{equation}
\label{eq.upper.triangular.algebra}
  \widetilde{\mathcal{A}} := \left \{ 
\begin{bmatrix} 
a  &  a' \\ 
0  &  a 
\end{bmatrix} 
\Bigg | a,a'\in\mathcal{A} \right \} .  
\end{equation}

Specially, given a OVI probability space $(\mathcal{A},\mathcal{B},E,E')$, we follow \cite{tseng2019operator} and define its corresponding \emph{upper triangular probability space}  $(\widetilde{\mathcal{A}},\widetilde{\mathcal{B}},\widetilde{E})$, where $\widetilde{E}:\widetilde{\mathcal{A}}\to \widetilde{\mathcal{B}}$ is given by
$$
\widetilde{E}
\Bigg( 
\begin{bmatrix} 
a  &  a' \\ 
0  &  a 
\end{bmatrix} 
\Bigg )
=
\begin{bmatrix} 
E(a)  &  E(a')+E'(a) \\ 
0  &  E(a) 
\end{bmatrix}. 
$$

As proved in \cite{tseng2019operator}, unital sub-algebras $(\mathcal{A}_i)_{i\in I}$ are infinitesimally free over $\mathcal{B}$ if and only if $(\widetilde{\mathcal{A}}_i)_{i\in I}$ are free over $\widetilde{\mathcal{B}}$, where $\widetilde{\mathcal{B}}$ and $\widetilde{\mathcal{A}}_i$ are defined as in $\eqref{eq.upper.triangular.algebra}$. Therefore, there is a connection between operator-valued infinitesimal freeness and freeness with amalgamation over an algebra of $2\times 2$ upper triangular matrices. This connection provides us a way to construct the operator-valued infinitesimal free convolution (see \cite{tseng2019operator}). This inspires us to construct the infinitesimal Boolean and Monotone convolutions by applying the analogue method. 


\subsection{Operator-valued infinitesimal cumulants}

$\ $

\noindent In order to define infinitesimal cumulants in an operator-valued space, we need to introduce some notation. We will do this at the best possible generality as this will be very useful in the rest of the paper. The general idea is to follow a similar approach as used to define cumulants.

\begin{nota}
\label{notation.partial}
Let $\mathcal{B}$ be a unital subalgebra of $\mathcal{A}$ (a unital algebra). Let $\{f_n:\mathcal{A}^n\to \mathcal{B}\}_{n\geq 1}$ and $\{\partial f_n:\mathcal{A}^n\to \mathcal{B}\}_{n\geq 1}$ be two sequences of multilinear maps. Given a partition $\pi \in NC(n)$ and a fixed block $V\in \pi$, we are going to denote by $\partial f_{\pi,V}:\mathcal{A}^n\to \mathcal{B}$ the map that is equal to $f_{\pi}$  (as in Definition \ref{partitions.multiplicative}) except that for the block $V$ we replace $f_{|V|}$ by $\partial f_{|V|}$. More specifically, we can define $\partial f_{\pi,V}$ recursively as follows:
\begin{enumerate}
    \item For $\pi=1_n$ and $V=[n]$ (the only possible choice of block) we set $\partial f_{\pi,V}=\partial f_n$.
    \item For $\pi\in NC(n)$ and $V\in \pi$ we pick $W=\{l+1,\dots, l+k\}\in \pi$ a interval block of $\pi$ and take $\pi_0=\pi\backslash W\in NC(n-k)$ the partition obtained by deleting from $\pi$ the block $W$. We have 2 cases,
    \begin{itemize}
        \item If $W=V$, then 
        $\partial f_{\pi,V}(x_1,\dots.x_n)= f_{\pi_0}(x_1,\dots,x_l \partial f_k(x_{l+1},\dots,x_{l+k}),x_{l+k+1},\dots,x_n).$
        \item If $W\neq V$, then 
        $\partial f_{\pi,V}(x_1,\dots.x_n)= \partial f_{\pi_0,V}(x_1,\dots,x_l f_k(x_{l+1},\dots,x_{l+k}),x_{l+k+1},\dots,x_n).$
    \end{itemize}
\end{enumerate}

As in Definition \ref{partitions.multiplicative}, it can be seen that $f_{\pi,V}$ does not depend on how we pick the interval block $V$. 

Then, we denote by $\partial f_{\pi}:\mathcal{A}^n\to \mathcal{B}$ the map such that
$$
\partial f_{\pi}(a_1,\dots,a_n):=\sum\limits_{V\in \pi}\partial f_{\pi,V}(a_1,\dots,a_n).
$$
\end{nota}

When working with families of functions in OVI probability spaces its useful to merge functions into a single function in the associated upper triangular probability space.

\begin{nota}
\label{nota.tilde.cumulant}
Let $\mathcal{B}$ be a unital subalgebra of $\mathcal{A}$ (a unital algebra), and let $\{f_n:\mathcal{A}^n\to \mathcal{B}\}_{n\geq 1}$ and $\{\partial f_n:\mathcal{A}^n\to \mathcal{B}\}_{n\geq 1}$ be two sequences of multilinear maps. We merge these sequences into a sequence of multilinear maps $\widetilde{f}_n: \widetilde{\mathcal{A}}^n\to \widetilde{\mathcal{B}}$ such that 
\begin{equation*}
\widetilde{f}_n
\Bigg( 
\begin{bmatrix} 
a_1  &  a'_1 \\ 
0  &  a_1
\end{bmatrix} , \dots, \begin{bmatrix} 
a_n  &  a'_n \\ 
0  &  a_n 
\end{bmatrix} 
\Bigg )
:=
\begin{bmatrix} 
f_n(a_1,\dots,a_n)  &  \sum\limits_{j=1}^n f_n(a_1,\dots, a_{j-1},a'_{j},\dots, a_n)+\partial f_n(a_1,\dots,a_n) \\ 
0  &  f_n(a_1,\dots,a_n) 
\end{bmatrix}.
\end{equation*}
Since $\{\widetilde{f}_n:\widetilde{\mathcal{A}}^n\to \widetilde{\mathcal{B}}\}_{n\geq 1}$ is a family of multilinear maps we can define $\widetilde{f}_\pi:=(\widetilde{f})_\pi$ following Definition \ref{partitions.multiplicative}.
\end{nota}

\begin{rem}
\label{rem.Eut}
To clarify the notation that we just introduce, we gave an example considering the case $f=E$, by this we mean taking $f_n(a_1,\dots,a_n)=E(a_1\dots a_n)$ and $\partial f_n(a_1,\dots,a_n)=E'(a_1\dots a_n)$ for all $n\in \mathbb{N}$ and $a_1\dots,a_n\in\mathcal{A}$.  Then, previous notation is telling us that
$$
\partial E_{\pi}(a_1,\dots,a_n):=\sum\limits_{V\in \pi}\partial E_{\pi,V}(a_1,\dots,a_n),
$$
and we get that $\widetilde{f}_n(a_1,\dots,a_n)=\widetilde{E}(a_1\dots a_n)$ as we would expect.
\end{rem}

We now state a lemma in connection to the previous notation. This is a technical but very useful result, that allows to clearly parallel the OVI setting with the OV setting. A special case of this lemma appears in the proof of Lemma 10 in \cite{tseng2019operator}.
\begin{lem}
\label{lemma10.Tseng}
Let $\mathcal{B}$ be a unital subalgebra of $\mathcal{A}$, and let $\{f_n:\mathcal{A}^n\to \mathcal{B}\}_{n\geq 1}$ and $\{\partial f_n:\mathcal{A}^n\to \mathcal{B}\}_{n\geq 1}$ be sequences of multilinear maps. Consider $A_1,\dots,A_n\in \widetilde{\mathcal{A}}$  with 
$
A_i=\begin{bmatrix}
a_i & a'_i \\
0  & a_i
\end{bmatrix}
$ for each $i=1,\dots, n$. Then for each $n\in \mathbb{N}$ and $\pi\in NC(n)$, we have 
\begin{equation}\label{eqn.lemma10.Tseng}
\widetilde{f}_{\pi}(A_1,\dots ,A_n) =   
\begin{bmatrix}
f_{\pi}(a_1,\dots ,a_n) & \sum\limits_{j=1}^n f_{\pi}(a_1,\dots, a_{j-1},a'_{j},a_{j+1},\dots, a_n)+\partial f_{\pi}(a_1,\dots, a_n)\\
0 & f_{\pi}(a_1,\dots, a_n)
\end{bmatrix}.
\end{equation}
\end{lem}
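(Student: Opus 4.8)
The plan is to prove \eqref{eqn.lemma10.Tseng} by induction on $n$, mirroring the recursive definition of $\widetilde f_\pi$ given in Definition \ref{partitions.multiplicative}, and simultaneously keeping track of the ``diagonal + upper-right'' block structure. The key observation is that the set of upper triangular matrices $\widetilde{\mathcal A}$ is closed under multiplication, and the product rule
$$
\begin{bmatrix} a & a' \\ 0 & a \end{bmatrix}\begin{bmatrix} b & b' \\ 0 & b \end{bmatrix}=\begin{bmatrix} ab & ab'+a'b \\ 0 & ab \end{bmatrix}
$$
is exactly what produces the ``Leibniz-type'' sum $\sum_j f_\pi(a_1,\dots,a'_j,\dots,a_n)$ in the upper-right corner. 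So the bookkeeping is entirely combinatorial: everything rests on the fact that $\widetilde f_\pi$ is multiplicative, hence can be computed by repeatedly collapsing interval blocks, and at each collapse the upper-right entry accumulates one new $\partial$-term (coming from $\partial f_{|V|}$ on that block) plus shifted copies of the already-accumulated primed entries.

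First I would set up the base case $n=1$: here $\pi=1_1$, $\widetilde f_1=\widetilde f_1$ by definition, and the claimed formula is precisely Notation \ref{nota.tilde.cumulant} with a single argument, so it holds. For the inductive step, fix $n$ and $\pi\in NC(n)$, and pick an interval block $W=\{l+1,\dots,l+k\}\in\pi$. By multiplicativity of $\{\widetilde f_m\}$,
$$
\widetilde f_\pi(A_1,\dots,A_n)=\widetilde f_{\pi_0}\big(A_1,\dots,A_l\,\widetilde f_k(A_{l+1},\dots,A_{l+k}),\,A_{l+k+1},\dots,A_n\big),
$$
where $\pi_0=\pi\setminus W\in NC(n-k)$. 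Now compute the middle argument using Notation \ref{nota.tilde.cumulant}: $\widetilde f_k(A_{l+1},\dots,A_{l+k})$ is the upper triangular matrix with diagonal $c:=f_k(a_{l+1},\dots,a_{l+k})$ and upper-right entry $c':=\sum_{j=l+1}^{l+k} f_k(a_{l+1},\dots,a'_j,\dots,a_{l+k})+\partial f_k(a_{l+1},\dots,a_{l+k})$. Then $A_l\cdot\widetilde f_k(\cdots)\cdot A_{l+k+1}$ (or the appropriate truncation at the ends) is again upper triangular with diagonal $a_l c\,a_{l+k+1}$ and upper-right $a'_l c\,a_{l+k+1}+a_l c'\,a_{l+k+1}+a_l c\,a'_{l+k+1}$. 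Feeding this into the inductive hypothesis applied to $\pi_0$ on $n-k$ arguments gives the diagonal entry $f_\pi(a_1,\dots,a_n)$ (by the recursion in Definition \ref{partitions.multiplicative}) and an upper-right entry which, after unwinding, is a sum of $f_{\pi_0}$-evaluations each with exactly one primed slot, plus a $\partial f_{\pi_0}$ term.

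The remaining work is to check that this upper-right entry equals $\sum_{j=1}^n f_\pi(a_1,\dots,a'_j,\dots,a_n)+\partial f_\pi(a_1,\dots,a_n)$. The sum splits into three groups according to where the primed argument sits: (i) $j\notin W$ with $j$ among the ``outer'' arguments of $\pi_0$ — these come from the $f_{\pi_0}$-part of the inductive hypothesis with one primed outer slot, and give $\sum_{j\notin W} f_\pi(a_1,\dots,a'_j,\dots,a_n)$; (ii) $j\in W$ — these arise from the $c'$-contribution $a_l c'\,a_{l+k+1}$ fed through $f_{\pi_0}$, whose $f_k$-part yields $\sum_{j\in W} f_\pi(a_1,\dots,a'_j,\dots,a_n)$; (iii) the genuinely ``infinitesimal'' contributions: the $\partial f_{\pi_0}$-term from the hypothesis (which by Notation \ref{notation.partial} is $\sum_{V\in\pi_0}\partial f_{\pi_0,V}$, i.e.\ exactly those $\partial f_{\pi,V}$ with $V\neq W$), together with the $\partial f_k$-part of $c'$ fed through $f_{\pi_0}$ (which is exactly $\partial f_{\pi,W}$). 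Summing (iii) gives $\sum_{V\in\pi}\partial f_{\pi,V}=\partial f_\pi$. Matching these three groups term-by-term against the definitions is the heart of the argument and the only place where care is needed; the main obstacle is purely notational — making sure the recursive collapse of $\widetilde f$ and the recursive definitions of $f_\pi$ and $\partial f_{\pi,V}$ are tracked with identical block choices so that the pieces line up without ambiguity. I would also invoke, as in Definition \ref{partitions.multiplicative}, the fact that the value is independent of the chosen interval block $W$, which both justifies the freedom to pick a convenient $W$ and guarantees well-definedness of the whole identity.
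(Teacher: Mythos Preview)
Your approach is essentially the paper's: induct along the recursive definition of $f_\pi$, collapse an interval block, and use the Leibniz rule for products in $\widetilde{\mathcal A}$ to track the upper-right entry; the paper's proof is exactly this computation. Two small corrections: (1) the paper inducts on $|\pi|$, not on $n$ --- with your base case $n=1$ the partition $\pi=1_n$ for $n>1$ is not covered (its only interval block is $[n]$ itself, so $\pi_0$ is empty), so you should either switch to induction on $|\pi|$ or explicitly note that $\pi=1_n$ is just Notation \ref{nota.tilde.cumulant}; (2) the argument fed into $\widetilde f_{\pi_0}$ in slot $l$ is $A_l\,\widetilde f_k(A_{l+1},\dots,A_{l+k})$, not the triple product $A_l\,\widetilde f_k(\cdots)\,A_{l+k+1}$ --- the element $A_{l+k+1}$ remains a separate argument of $\widetilde f_{\pi_0}$, so its primed contribution arises from the inductive hypothesis, not from the Leibniz expansion of the middle slot.
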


\begin{proof}
We proceed by induction on the size $|\pi|$. For the base case, if $|\pi|=1$ then $\pi=1_n$ and \eqref{lemma10.Tseng} clearly holds from the definition of $\widetilde{f}_{1_n}=\widetilde{f}_{n}$. For $|\pi|>1$, consider an interval block $W=\{k+1,\dots,k+l\}$ of $\pi$ and $\pi_0=\pi\backslash V$. By definition:
\begin{equation}\label{eqn.1.proof.lemma10.Tseng}
\widetilde{f}_{\pi}(A_1,\dots ,A_n) = \widetilde{f}_{\pi_0}(A_1,\dots,A_{k-1},A_k\widetilde{f}_{l}(A_{k+1},\dots ,A_{k+l}),A_{k+l+1},\dots ,A_n).
\end{equation}
Using the definition, the diagonal entries of $A_k\widetilde{f}_{\pi}(A_{k+1},\dots ,A_{k+l})$ are $a_kf_{\pi}(a_{k+1},\dots ,a_{k+l})$, and the $(2,1)$-entry is
\begin{equation*}
K:=a_k\sum\limits_{j=k+1}^{k+l} f_{l}(a_k,\dots,a'_{j},\dots, a_{k+l})+a_k\partial f_{l}(a_{k+1},\dots, a_{k+l})+ a'_kf_{\pi}(a_{k+1},\dots ,a_{k+l}).
\end{equation*}
Using this equation and the induction hypothesis on $\pi_0$ the diagonal entries of the right hand side of \eqref{eqn.1.proof.lemma10.Tseng} become
$$\widetilde{f}_{\pi_0}(a_1,\dots,a_k\widetilde{f}_{l}(a_{k+1},\dots ,a_{k+l}),\dots ,a_n)=f_{\pi}(a_1,\dots ,a_n),$$
as desired. On the other hand, abbreviating $a_W:=(a_{k+1},\dots,a_{k+l})$, we get that the $(2,1)$-entry of \eqref{eqn.1.proof.lemma10.Tseng} is
\begin{eqnarray*}
 &&\sum\limits_{j=1}^{k-1} f_{\pi_0}(a_1,\dots,a'_{j},\dots,a_kf_l(a_W),\dots, a_n)+ f_{\pi_0}(a_1,\dots, a_{k-1},K,a_{k+1},\dots, a_n)\\
 &&+\sum\limits_{j=k+1}^n f_{\pi}(a_1,\dots,a_kf_l(a_W),\dots,a'_{j},\dots, a_n)+\partial f_{\pi_0}(a_1,\dots,a_kf_l(a_W),\dots, a_n)\\
 &&= \sum\limits_{j=1}^{n} f_{\pi_0}(a_1,\dots,a'_{j},\dots,a_kf_l(a_{k+1},\dots,a_{k+l}),\dots, a_n) \\
 &&\quad +f_{\pi_0}(a_1,\dots,a_k\partial f_l(a_W),\dots, a_n) +\partial f_{\pi_0}(a_1,\dots,a_kf_l(a_W),\dots, a_n) \\
 &&=\sum\limits_{j=1}^n f_{\pi}(a_1,\dots, a_{j-1},a'_{j},a_{j+1},\dots, a_n)+\partial f_{\pi}(a_1,\dots, a_n),
\end{eqnarray*}
as desired.
\end{proof}


Equation \eqref{eqn.lemma10.Tseng}, when $f=E$, plays a key role in the characterization of infinitesimal freeness in terms of vanishing of mixed OVI cumulants. 

\begin{defn}[OVI free cumulants]
For $n\in \mathbb{N}$, we define $\partial r^{\mathcal{B}}_{n}:\mathcal{A}^n\rightarrow \mathcal{B}$ to be the multilinear map determined by the formula
\[
\partial r^{\mathcal{B}}_n(a_1,a_2,\ldots, a_n)=\sum\limits_{\pi \in NC(n)}Mob(\pi,1_n)\partial E_{\pi}(a_1,a_2,\ldots,a_n), \qquad \forall a_1,a_2,\ldots,a_n\in\mathcal{A},
\]
The sequence $(\partial r^{\mathcal{B}}_n)_n$ is called \emph{operator-valued infinitesimal free cumulants} of $(\mathcal{A},\mathcal{B},E,E').$
\end{defn}

\begin{thm} 
Given an operator-valued infinitesimal probability space $(\mathcal{A}, \mathcal{B}, E, E')$, and subalgebras 
$\mathcal{A}_1,\dots, \mathcal{A}_n$ of $\mathcal{A}$. Then the following two statements are equivalent: 
\begin{itemize}
    \item [(1)] $\mathcal{A}_1,\dots, \mathcal{A}_n$ are infinitesimally free with respect to $(E, E').$ 
    \item [(2)] For every $n\geq 2$ and $i_1,\dots, i_s\in [n]$ which are not all equal, and for $a_1\in \mathcal{A}_{i_1},\dots, a_s\in\mathcal{A}_{i_s}$, we have $r^{\mathcal{B}}_s(a_1,\dots,a_s)=\partial r^{\mathcal{B}}_s(a_1,\dots,a_s)=0$.  
\end{itemize}
\end{thm}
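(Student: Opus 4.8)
The plan is to push the whole question into the associated upper-triangular probability space and combine two facts already available: first, from \cite{tseng2019operator}, $\mathcal{A}_1,\dots,\mathcal{A}_n$ are infinitesimally free over $\mathcal{B}$ with respect to $(E,E')$ if and only if $\widetilde{\mathcal{A}}_1,\dots,\widetilde{\mathcal{A}}_n$ are free over $\widetilde{\mathcal{B}}$ in $(\widetilde{\mathcal{A}},\widetilde{\mathcal{B}},\widetilde{E})$; and second, the operator-valued characterization of freeness with amalgamation (\cite{Spe98}), namely that the $\widetilde{\mathcal{A}}_i$ are free over $\widetilde{\mathcal{B}}$ exactly when all their mixed $\widetilde{\mathcal{B}}$-valued free cumulants $\widetilde{r}^{\widetilde{\mathcal{B}}}_s$ vanish. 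So it only remains to identify the vanishing of the mixed $\widetilde{r}^{\widetilde{\mathcal{B}}}_s$ with condition $(2)$.

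The bridge is that $\widetilde{r}^{\widetilde{\mathcal{B}}}_s$ is precisely the merged map $\widetilde{f}_s$ of Notation \ref{nota.tilde.cumulant} attached to the pair $(r^{\mathcal{B}}_s,\partial r^{\mathcal{B}}_s)$. Indeed, the free cumulants of $(\widetilde{\mathcal{A}},\widetilde{\mathcal{B}},\widetilde{E})$ satisfy $\widetilde{r}^{\widetilde{\mathcal{B}}}_s=\sum_{\pi\in NC(s)}Mob(\pi,1_s)\,\widetilde{E}_\pi$. Applying Lemma \ref{lemma10.Tseng} with $f=E$ (so that $\widetilde{f}_s=\widetilde{E}_s$, cf. Remark \ref{rem.Eut}), each $\widetilde{E}_\pi(A_1,\dots,A_s)$ with $A_j=\begin{bmatrix} a_j & a'_j\\ 0 & a_j\end{bmatrix}$ is the upper-triangular matrix with $E_\pi(a_1,\dots,a_s)$ on the diagonal and $\sum_{j=1}^s E_\pi(a_1,\dots,a'_j,\dots,a_s)+\partial E_\pi(a_1,\dots,a_s)$ in the $(1,2)$ slot. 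Since $Mob(\pi,1_s)\in\mathbb{Z}$ and matrix addition is entrywise, summing against these weights and using the definition of $\partial r^{\mathcal{B}}_s$ together with \eqref{free-mom} gives
\[
\widetilde{r}^{\widetilde{\mathcal{B}}}_s(A_1,\dots,A_s)=\begin{bmatrix} r^{\mathcal{B}}_s(a_1,\dots,a_s) & \displaystyle\sum_{j=1}^s r^{\mathcal{B}}_s(a_1,\dots,a'_j,\dots,a_s)+\partial r^{\mathcal{B}}_s(a_1,\dots,a_s)\\[4pt] 0 & r^{\mathcal{B}}_s(a_1,\dots,a_s)\end{bmatrix}.
\]

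From here the equivalence is bookkeeping. If $(1)$ holds, equivalently all mixed $\widetilde{r}^{\widetilde{\mathcal{B}}}_s$ vanish: given $i_1,\dots,i_s$ not all equal and $a_j\in\mathcal{A}_{i_j}$, choose arbitrary $a'_j\in\mathcal{A}_{i_j}$ so $A_j\in\widetilde{\mathcal{A}}_{i_j}$; the $(1,1)$-entry vanishes, so $r^{\mathcal{B}}_s(a_1,\dots,a_s)=0$, hence $r^{\mathcal{B}}_s$ is zero on every $s$-tuple with index pattern $i_1,\dots,i_s$. Replacing $a_j$ by $a'_j$ does not change that index pattern, so each cross-term $r^{\mathcal{B}}_s(a_1,\dots,a'_j,\dots,a_s)$ in the $(1,2)$-entry also vanishes, and vanishing of that entry forces $\partial r^{\mathcal{B}}_s(a_1,\dots,a_s)=0$; this is $(2)$. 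Conversely, $(2)$ makes both entries vanish, so the mixed $\widetilde{r}^{\widetilde{\mathcal{B}}}_s$ vanish and $(1)$ holds. There is no genuine obstacle: all the combinatorial content has been absorbed into Lemma \ref{lemma10.Tseng} and the two cited equivalences, and the only point requiring a moment's care is the last step's observation that replacing an argument $a_j$ by an element $a'_j$ of the \emph{same} algebra preserves the ``mixed'' hypothesis, which is exactly what lets the already-established vanishing of $r^{\mathcal{B}}_s$ kill the off-diagonal cross-terms.
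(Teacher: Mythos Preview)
Your proof is correct and follows exactly the approach the paper indicates: the sentence immediately preceding the definition of $\partial r^{\mathcal{B}}_n$ notes that equation \eqref{eqn.lemma10.Tseng} with $f=E$ ``plays a key role in the characterization of infinitesimal freeness in terms of vanishing of mixed OVI cumulants,'' and the paper treats the theorem as the content of Theorem 10 of \cite{tseng2019operator}, whose argument is precisely the reduction to the upper triangular space you carry out. Your bookkeeping in both directions, including the observation that replacing $a_j$ by $a'_j\in\mathcal{A}_{i_j}$ preserves the mixed index pattern, is correct.
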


\section{OVI Boolean Independence}
\label{sec:Boolean.Indep}

In this section we extend the notion of OV Boolean independence to the infinitesimal setting. Once we have this, we stablish the connection between OVI Boolean independence and operator-valued Boolean independence over an algebra of $2\times 2$ upper triangular matrices. This allow us to prove Theorem \ref{Thm.Boolean.add.conv} describing the OVI additive Boolean convolution in terms of the infinitesimal Cauchy transform and the $F$-transform. Then we introduce the notion of OVI Boolean cumulants, study its basic notions and use them to link the OVI setting with the scalar-valued infinitesimal setting.

\subsection{OVI Boolean Convolution}

$\ $

\noindent The notion of OV Boolean independence can be extended to the infinitesimal setting as follows. 

\begin{defn}
Suppose that $(\mathcal{A},\mathcal{B},E,E')$ is an OVI probability space. Subalgebras $(\mathcal{A}_i)_{i\in I}$ containing $\mathcal{B}$ are called \emph{infinitesimally Boolean independent} if for all $n\in\mathbb{N}$ and $a_1,\dots, a_n\in\mathcal{A}$ such that $a_j\in\mathcal{A}_{i_j}$ where $i_1\neq i_2\neq \dots \neq i_n\in I$, we have
\begin{eqnarray}
E(a_1\cdots a_n) &=& E(a_1)\cdots E(a_n) ; \label{eq.Boolean.indep}\\
E'(a_1\cdots a_n) &=&\sum_{j=1}^n E(a_1)\cdots E(a_{j-1})E'(a_j)E(a_{j+1})\cdots E(a_{n}). \label{eq.Boolean.inf.indep}
\end{eqnarray}
\end{defn}

OVI Boolean independence can be nicely phrased if we consider the associated upper triangular probability space.

\begin{prop}  \label{p1}
Suppose that $(\mathcal{A},\mathcal{B},E,E')$ is an OVI probability space, and $(\widetilde{\mathcal{A}},\widetilde{\mathcal{B}},\widetilde{E})$ be the corresponding upper triangular probability space.
Then subalgebras $(\mathcal{A}_i)_{i\in I}$ containing $\mathcal{B}$ are infinitesimally Boolean independent over $\mathcal{B}$ if and only if $(\widetilde{\mathcal{A}}_i)_{i\in I}$ are Boolean independent over $\widetilde{\mathcal{B}}$.
\end{prop}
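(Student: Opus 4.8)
The plan is to establish both directions simultaneously via a direct matrix computation, matching the entries of the $2\times 2$ upper triangular matrices appearing on each side of the defining identity for Boolean independence over $\widetilde{\mathcal{B}}$. First I would observe that $\widetilde{\mathcal{A}}_i$ is indeed a unital subalgebra of $\widetilde{\mathcal{A}}$ containing $\widetilde{\mathcal{B}}$ — the set of matrices of the form $\begin{bmatrix} a & a' \\ 0 & a\end{bmatrix}$ is closed under multiplication, and $\mathcal{A}_i\supseteq\mathcal{B}$ — so the assertion that $(\widetilde{\mathcal{A}}_i)_{i\in I}$ is Boolean independent over $\widetilde{\mathcal{B}}$ is meaningful. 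Then fix indices $i_1\neq i_2\neq\cdots\neq i_n$ in $I$ and take $A_j=\begin{bmatrix} a_j & a'_j\\ 0 & a_j\end{bmatrix}\in\widetilde{\mathcal{A}}_{i_j}$; as $a_j$ and $a'_j$ range independently over $\mathcal{A}_{i_j}$, these are exactly the general elements of $\widetilde{\mathcal{A}}_{i_j}$. A straightforward induction on $n$, using $\begin{bmatrix} a & a'\\0&a\end{bmatrix}\begin{bmatrix} b & b'\\0&b\end{bmatrix}=\begin{bmatrix} ab & ab'+a'b\\0&ab\end{bmatrix}$, shows that $A_1\cdots A_n$ is again upper triangular of the same shape, with diagonal entry $a_1\cdots a_n$ and $(1,2)$-entry $\sum_{j=1}^n a_1\cdots a_{j-1}a'_j a_{j+1}\cdots a_n$. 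Applying $\widetilde{E}$ and comparing with $\widetilde{E}(A_1)\cdots\widetilde{E}(A_n)$, whose diagonal entry is $E(a_1)\cdots E(a_n)$ and whose $(1,2)$-entry is $\sum_{j=1}^n E(a_1)\cdots E(a_{j-1})\big(E(a'_j)+E'(a_j)\big)E(a_{j+1})\cdots E(a_n)$, reduces the identity $\widetilde{E}(A_1\cdots A_n)=\widetilde{E}(A_1)\cdots\widetilde{E}(A_n)$ to a pair of identities in $\mathcal{B}$: one from the diagonal, one from the $(1,2)$-entry.

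The diagonal identity, required for all admissible $a_j$, is precisely \eqref{eq.Boolean.indep}. For the $(1,2)$-entry, I would observe that once \eqref{eq.Boolean.indep} is known, it applies verbatim with $a_j$ replaced by $a'_j\in\mathcal{A}_{i_j}$ in the $j$-th slot, giving $E(a_1\cdots a_{j-1}a'_j a_{j+1}\cdots a_n)=E(a_1)\cdots E(a_{j-1})E(a'_j)E(a_{j+1})\cdots E(a_n)$ for every $j$. Summing over $j$, the terms containing the $a'_j$ cancel from both sides of the $(1,2)$-identity, and what remains is exactly \eqref{eq.Boolean.inf.indep}. Running this backwards: if \eqref{eq.Boolean.indep} and \eqref{eq.Boolean.inf.indep} both hold, the diagonal identity is immediate, and the same cancellation recovers the $(1,2)$-identity, so $\widetilde{E}(A_1\cdots A_n)=\widetilde{E}(A_1)\cdots\widetilde{E}(A_n)$. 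Since the indices and elements were arbitrary, this yields the desired equivalence.

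The computation itself is routine; the one point that needs care is the quantifier structure of the $(1,2)$-entry condition. Boolean independence over $\widetilde{\mathcal{B}}$ demands the matrix identity for \emph{all} choices of the $A_j$, hence for all choices of the $a'_j$ as well, and it is exactly this freedom — together with the fact that \eqref{eq.Boolean.indep} holds for arbitrary elements of the $\mathcal{A}_{i_j}$, in particular for the $a'_j$ — that makes the cancellation legitimate and forces \eqref{eq.Boolean.inf.indep} rather than some weaker relation. This is the Boolean counterpart of the argument used for the free case in \cite{tseng2019operator}, and an analogous scheme will be used in Proposition \ref{p2} for the monotone case.
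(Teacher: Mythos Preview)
Your proof is correct and follows essentially the same matrix-entry computation as the paper. The only cosmetic difference is that for the backward direction the paper specializes to $a'_j=0$ so that the $(1,2)$-entry identity yields \eqref{eq.Boolean.inf.indep} directly, whereas you keep general $a'_j$ and cancel via \eqref{eq.Boolean.indep}; both arguments are valid.
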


\begin{proof}
Assume that $(\mathcal{A}_i)_{i\in I}$ are infinitesimally Boolean independent. Let $A_1,\dots, A_n\in \widetilde{\mathcal{A}}$ with each 
$A_j=
\begin{bmatrix}
a_j & a_j'\\
0 & a_j
\end{bmatrix} \in \widetilde{\mathcal{A}}_{i_j}$  
where $i_1\neq i_2\neq \cdots \neq i_n$.
Note that
\begin{equation*}
\widetilde{E}( A_1\cdots A_n ) 
= \begin{bmatrix}
E(a_1\cdots a_n) & E'(a_1\cdots a_n) + \sum\limits_{j=1}^n E( a_1\cdots a_{j-1}a'_{j}a_{j+1}\cdots a_n) \\
0 & E(a_1\cdots a_n)
\end{bmatrix}.
\end{equation*}

By our assumption, the latter is equal to 
\begin{eqnarray*}
& &
\begin{bmatrix}
E(a_1)\cdots E(a_n) & \sum\limits_{j=1}^n E( a_1)\cdots E(a_{j-1})\big( E'(a_j)+E(a'_{j}) \big)E(a_{j+1})\cdots E(a_n) \\
0 & E(a_1)\cdots E(a_n) 
\end{bmatrix} \\
&=&
\begin{bmatrix}
E(a_1) & E'(a_1)+E(a'_1) \\
0 & E(a_1)
\end{bmatrix}
\cdots 
\begin{bmatrix}
E(a_n) & E'(a_n)+E(a_n') \\
0 & E(a_n)
\end{bmatrix} \\
&=& \widetilde{E}(A_1)\cdots \widetilde{E}(A_n).
\end{eqnarray*}
Therefore, $(\widetilde{\mathcal{A}}_i)_{i\in I}$ are Boolean independent over $\widetilde{\mathcal{B}}$.\\

Conversely, suppose that $(\widetilde{\mathcal{A}}_i)_{i\in I}$ are Boolean independent over $\widetilde{\mathcal{B}}$. Let $a_1\cdots,a_n\in\mathcal{A}$ such that $a_j\in\mathcal{A}_{i_j}$ with $i_1\neq \cdots \neq i_n$. For each $j$, we consider 
$A_j=\begin{bmatrix}
a_j & 0 \\
0 & a_j
\end{bmatrix}$.
Since $(\widetilde{\mathcal{A}}_{i})_{i\in I}$ are Boolean independent, we have $\widetilde{E}(A_1\cdots A_n) =\widetilde{E}(A_1)\cdots \widetilde{E}(A_n),$ which implies that
\[
\begin{bmatrix}
E(a_1\cdots a_n) & E'(a_1\cdots a_n) \\
0 & E(a_1\cdots a_n)
\end{bmatrix} = 
\begin{bmatrix}
E(a_1)\cdots E(a_n) & \sum\limits_{j=1}^n E(a_1)\cdots E(a_{j-1})E'(a_j)E(a_{j+1})\cdots E(a_n) \\
0 & E(a_1)\cdots E(a_n)
\end{bmatrix}.
\] Thus, reading the two upper entries we conclude that
\begin{eqnarray*}
E(a_1\cdots a_n) &=& E(a_1)\cdots E(a_n) ; \\
E'(a_1\cdots a_n) &=& \sum\limits_{j=1}^n E(a_1)\cdots E(a_{j-1})E'(a_j)E(a_{j+1})\cdots E(a_{n}).
\end{eqnarray*}
\end{proof}

We are now in position to describe the OVI additive Boolean convolution.

\begin{proof}[Proof of theorem \ref{Thm.Boolean.add.conv}]
Suppose that $(\mathcal{A},\mathcal{B},E,E')$ is a $C^*$-operator-valued infinitesimal probability space, and if $x,y\in\mathcal{A}$ are two selfadjoint random variables that are infinitesimally Boolean independent. Let $X=
\begin{bmatrix}
x & 0 \\
0 & x
\end{bmatrix}$ and $Y=
\begin{bmatrix}
y & 0 \\
0 & y
\end{bmatrix}$. Then by Proposition \ref{p1}, we have $X, Y$ are Boolean independent over $\widetilde{\mathcal{B}}$. 
By \eqref{Beqn}, we have
$$
B_X \Bigg (\begin{bmatrix}
b & c \\
0 & b
\end{bmatrix}\Bigg )+B_Y \Bigg (\begin{bmatrix}
b & c \\
0 & b
\end{bmatrix} \Bigg ) = B_{X+Y} \Bigg (\begin{bmatrix}
b & c \\
0 & b
\end{bmatrix} \Bigg).
$$
That is, 
\begin{equation}\label{beqn3}
\begin{bmatrix}
b & c \\
0 & b
\end{bmatrix} - F_{X}\Bigg(\begin{bmatrix}
b & c \\
0 & b
\end{bmatrix}\Bigg) 
+ 
\begin{bmatrix}
b & c \\
0 & b
\end{bmatrix} - F_{Y}\Bigg(\begin{bmatrix}
b & c \\
0 & b
\end{bmatrix}\Bigg) 
=\begin{bmatrix}
b & c \\
0 & b
\end{bmatrix} - F_{X+Y}\Bigg(\begin{bmatrix}
b & c \\
0 & b
\end{bmatrix}\Bigg).
\end{equation}
We note that 
\begin{eqnarray*}
F_{X}\Bigg(\begin{bmatrix}
b & c \\
0 & b
\end{bmatrix}\Bigg)  &=&
G_{X}\Bigg(\begin{bmatrix}
b & c \\
0 & b
\end{bmatrix}\Bigg)^{-1} 
=
\begin{bmatrix}
G_x(b) & G_x'(b)c+g_x(b) \\
0 & G_x(b)    
\end{bmatrix}^{-1}   \\
&=&
\begin{bmatrix}
F_x(b) & -F_x(b)(G_x'(b)c+g_x(b))F_x(b) \\
0 & F_x(b)
\end{bmatrix}   
= 
\begin{bmatrix}
F_x(b) & F_x'(b)c-F_x(b)g_x(b)F_x(b) \\
0 & F_x(b)
\end{bmatrix}.
\end{eqnarray*}
Thus, comparing the $(1,2)$-entry of \eqref{beqn3}, we obtain
\begin{eqnarray*}
F_x(b)g_x(b)F_x(b) + F_y(b)g_y(b)F_y(b) &=& F_{x+y}(b)g_{x+y}(b)F_{x+y}(b) \\
                                            &=& (F_x(b)+F_y(b)-b)g_{x+y}(b)(F_x(b)+F_y(b)-b). 
\end{eqnarray*}
Hence, we conclude that
$$
g_{x+y}(b) = (F_x(b)+F_y(b)-b)^{-1} \Big( F_x(b)g_x(b)F_x(b) + F_y(b)g_y(b)F_y(b) \Big) (F_x(b)+F_y(b)-b)^{-1}. 
$$
\end{proof}
\begin{rem}
When $\mathcal{B}=\mathbb{C}$, we obtain the scalar version of infinitesimal Boolean convolution. To be precise, if $x=x^*$ and $y=y^*$ are infinitesimally free from a $C^*$-infinitesimal probability space $(\mathcal{A},\varphi,\varphi')$, we have
$$
g_{x+y}(z)=\frac{g_x(z)F_x(z)^2+g_y(z)F_y(z)^2}{(F_x(z)+F_y(z)-z)^2}.
$$
\end{rem}

\subsection{OVI Boolean cumulants}

$\ $

\noindent The notion of OVI Boolean cumulants is a direct analogue of the OV Boolean cumulants.
\begin{defn}
Suppose that $(\mathcal{A},\mathcal{B},E,E')$ is an OVI probability space, with operator-valued Boolean cumulants $\{\beta_n^{\mathcal{B}}:\mathcal{A}^n\to \mathcal{B}\}$ (see equation \eqref{boo-mom}). We define the \emph{operator-valued infinitesimally Boolean cumulants} $\{\partial\beta_n^{\mathcal{B}}:\mathcal{A}^n\to \mathcal{B}\}$ to be the (unique) family of functions such that for all $n\in\mathbb{N}$ and $a_1,\dots, a_n\in\mathcal{A}$ 
\begin{equation}
\label{inf.boo-mom}
\partial\beta^{\mathcal{B}}_n(a_1,\cdots,a_n) = \sum\limits_{\pi\in \mathcal{I}(n)}(-1)^{|\pi|-1}\partial E_{\pi} (a_1,\cdots,a_n).
\end{equation}
\end{defn}

A remarkable thing is that the upper triangular world is again useful here.

\begin{rem}
Consider the corresponding upper triangular operator-valued probability space $(\widetilde{\mathcal{A}},\widetilde{\mathcal{B}},\widetilde{E})$. Then the operator-valued Boolean cumulants $\{\beta^{\widetilde{\mathcal{B}}}_n:\widetilde{\mathcal{A}}^n\to \widetilde{\mathcal{B}}\}$, are intimately related to the operator-valued and OVI Boolean cumulants of $(\mathcal{A},\mathcal{B},E,E')$. Indeed, we consider $A_j=
\begin{bmatrix}
a_j & a_j'\\
0 & a_j
\end{bmatrix} \in \widetilde{\mathcal{A}}$ for $j=1,\dots,n$ and use Equation \eqref{boo-mom} and Lemma \eqref{lemma10.Tseng} to obtain
\begin{align*}
\beta^{\tilde{\mathcal{B}}}_n(A_1,\dots,A_n) &= \sum\limits_{\pi\in \mathcal{I}(n)}(-1)^{|\pi|-1} \widetilde{E}_\pi(A_1,\cdots, A_n)\\
&=\sum\limits_{\pi\in \mathcal{I}(n)}(-1)^{|\pi|-1}
\begin{bmatrix}
E_{\pi}(a_1,\dots ,a_n) & \partial E_{\pi}(a_1,\dots, a_n)+\sum\limits_{j=1}^n E_{\pi}(a_1,\dots, a_{j-1},a'_{j},\dots, a_n)\\
0 & E_{\pi}(a_1,\dots, a_n)
\end{bmatrix}\\
&=
\begin{bmatrix}
\beta^{\mathcal{B}}_n(a_1,\dots,a_n) &  \partial \beta^{\mathcal{B}}_n(a_1,\dots,a_n)  + \sum\limits_{j=1}^n \beta^{\mathcal{B}}_n( a_1,\cdots a_{j-1},a'_{j},a_{j+1},\cdots, a_n) \\
0 & \beta^{\mathcal{B}}_n(a_1,\dots,a_n) 
 \end{bmatrix}
\\ &= \widetilde{\beta}_n(A_1,\dots,A_n).
\end{align*}
Therefore $\beta^{\tilde{\mathcal{B}}}_n=\widetilde{\beta}_n$, where $\widetilde{\beta}^{\tilde{\mathcal{B}}}_n$ are the OV Boolean cumulants of $(\widetilde{\mathcal{A}},\widetilde{\mathcal{B}},\widetilde{E})$, while $\widetilde{\beta}_n$ are the multilinear functional obtained from merging the OV and OVI cumulants of $(\mathcal{A},\mathcal{B},E,E')$ as in Notation \ref{nota.tilde.cumulant}. From now on we will use $\beta^{\tilde{\mathcal{B}}}_n$ or $\widetilde{\beta}_n$ indistinctly to refer to Boolean cumulants of the associated upper triangular probability space.
\end{rem}


\begin{rem}
We are also able to give a formula writing infinitesimal moments in terms of OVI Boolean cumulants, this could be seen as an inverse of formula \eqref{inf.boo-mom}:
\begin{equation}
\label{inf.mom-boo}
E'_n(a_1,\dots,a_n) = \sum\limits_{\pi\in \mathcal{I}(n)}\partial \beta_{\pi}^{\mathcal{B}} (a_1,\dots,a_n).
\end{equation}

Note that it is not possible to get this formula just by performing Möbius inversion to \eqref{inf.boo-mom}. However, we can make use of the relation to the upper triangular OV space and perfom the Möbius inversion there.
Indeed,  Equation \eqref{mom-boo}, the previous remark and Equation \eqref{eqn.lemma10.Tseng} (with $f= \beta$) yield
\begin{align*}
\widetilde{E}_n(A_1,\dots,A_n)) &= \sum\limits_{\pi\in \mathcal{I}(n)} \widetilde{\beta}_\pi(A_1,\cdots, A_n)\\
&=\sum\limits_{\pi\in \mathcal{I}(n)}
\begin{bmatrix}
\beta_{\pi}^{\mathcal{B}} (a_1,\dots ,a_n) & \partial\beta_{\pi}^{\mathcal{B}} (a_1,\dots, a_n)+\sum\limits_{j=1}^n \beta_{\pi}^{\mathcal{B}} (a_1,\dots, a_{j-1},a'_{j},a_{j+1},\dots, a_n)\\
0 & \beta_{\pi}^{\mathcal{B}} (a_1,\dots, a_n)
\end{bmatrix}
\end{align*}

If we set $a_1'=a_2'=\dots=a_n'=0$ and just focus on the $(1,2)$-entry, we obtain \eqref{inf.mom-boo}
\end{rem}

By a similar argument to Theorem 10 of \cite{tseng2019operator}, we can obtain the following result.
\begin{thm}\label{thm3}
Suppose that $(\mathcal{A}, \mathcal{B}, \mathbb{E},\mathbb{E'})$ is an operator-valued infinitesimal probability space, and 
$\mathcal{A}_1,\dots, \mathcal{A}_n$ are unital subalgebras of $\mathcal{A}$ that contain $\mathcal{B}$. Then the following two statements are equivalent: 
\begin{itemize}
    \item [(1)] $\mathcal{A}_1,\dots, \mathcal{A}_n$ are infinitesimally Boolean independent with respect to $(\mathbb{E},\mathbb{E'}).$ \\
    \item [(2)] For every $n\geq 2$ and $i_1,\dots, i_s\in [n]$ which are not all equal, and for $a_1\in \mathcal{A}_{i_1},\dots, a_s\in\mathcal{A}_{i_s}$, we have $\beta^{\mathcal{B}}_s(a_1,\dots,a_s)=\partial\beta^{\mathcal{B}}_s(a_1,\dots,a_s)=0$.  
\end{itemize}
\end{thm}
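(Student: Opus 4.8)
The plan is to follow the blueprint of Theorem 10 in \cite{tseng2019operator} and push the whole statement down to the operator-valued (non-infinitesimal) level through the upper triangular construction. Three facts do the work: Proposition \ref{p1}, which identifies infinitesimal Boolean independence of $(\mathcal{A}_i)_i$ over $\mathcal{B}$ with Boolean independence of $(\widetilde{\mathcal{A}}_i)_i$ over $\widetilde{\mathcal{B}}$; the operator-valued characterization of Boolean independence by vanishing of mixed Boolean cumulants (which, as in the scalar case, follows from the moment--Boolean-cumulant formulas \eqref{mom-boo}--\eqref{boo-mom}; see \cite{popa2009new}), applied to the space $(\widetilde{\mathcal{A}},\widetilde{\mathcal{B}},\widetilde{E})$; and the identity $\beta^{\widetilde{\mathcal{B}}}_n=\widetilde{\beta}_n$ recorded in the Remark above, which for $A_j=\begin{bmatrix} a_j & a'_j \\ 0 & a_j\end{bmatrix}\in\widetilde{\mathcal{A}}$ reads
\[
\beta^{\widetilde{\mathcal{B}}}_n(A_1,\dots,A_n)=
\begin{bmatrix}
\beta^{\mathcal{B}}_n(a_1,\dots,a_n) & \partial\beta^{\mathcal{B}}_n(a_1,\dots,a_n)+\sum_{j=1}^n\beta^{\mathcal{B}}_n(a_1,\dots,a_{j-1},a'_j,a_{j+1},\dots,a_n)\\
0 & \beta^{\mathcal{B}}_n(a_1,\dots,a_n)
\end{bmatrix}.
\]

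For the implication $(1)\Rightarrow(2)$: assuming infinitesimal Boolean independence, Proposition \ref{p1} together with the operator-valued characterization gives $\beta^{\widetilde{\mathcal{B}}}_s(A_1,\dots,A_s)=0$ for every mixed tuple $A_j\in\widetilde{\mathcal{A}}_{i_j}$. Given a mixed tuple $a_1\in\mathcal{A}_{i_1},\dots,a_s\in\mathcal{A}_{i_s}$, I would apply this to $A_j=\begin{bmatrix} a_j & 0\\ 0 & a_j\end{bmatrix}\in\widetilde{\mathcal{A}}_{i_j}$: in the displayed identity all $a'_j$ are $0$, so the diagonal entry yields $\beta^{\mathcal{B}}_s(a_1,\dots,a_s)=0$ and the $(1,2)$-entry yields $\partial\beta^{\mathcal{B}}_s(a_1,\dots,a_s)=0$.

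For the implication $(2)\Rightarrow(1)$: take a mixed tuple $A_1\in\widetilde{\mathcal{A}}_{i_1},\dots,A_s\in\widetilde{\mathcal{A}}_{i_s}$ with $A_j=\begin{bmatrix} a_j & a'_j\\ 0 & a_j\end{bmatrix}$, and recall that then both $a_j$ and $a'_j$ lie in $\mathcal{A}_{i_j}$. In the displayed identity the diagonal entry $\beta^{\mathcal{B}}_s(a_1,\dots,a_s)$ is a mixed Boolean cumulant, hence vanishes by $(2)$; in the $(1,2)$-entry the term $\partial\beta^{\mathcal{B}}_s(a_1,\dots,a_s)$ vanishes by $(2)$, and each shifted summand $\beta^{\mathcal{B}}_s(a_1,\dots,a_{j-1},a'_j,a_{j+1},\dots,a_s)$ is again a mixed Boolean cumulant for the same index pattern $i_1,\dots,i_s$, hence also vanishes by $(2)$. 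Thus $\beta^{\widetilde{\mathcal{B}}}_s(A_1,\dots,A_s)=0$ for all mixed tuples, so $(\widetilde{\mathcal{A}}_i)_i$ are Boolean independent over $\widetilde{\mathcal{B}}$, and Proposition \ref{p1} then delivers infinitesimal Boolean independence of $(\mathcal{A}_i)_i$.

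I do not expect a genuine obstacle. The one point requiring care is that in $(2)\Rightarrow(1)$ the shifted summands $\beta^{\mathcal{B}}_s(a_1,\dots,a'_j,\dots,a_s)$ still carry a mixed index pattern; this is precisely why the infinitesimal hypothesis on $\partial\beta^{\mathcal{B}}_s$ alone would not suffice and the statement must also demand vanishing of the ordinary mixed $\beta^{\mathcal{B}}_s$. A secondary point is the operator-valued input that vanishing of mixed Boolean cumulants is equivalent to Boolean independence: if one prefers not to cite \cite{popa2009new}, it follows in a few lines from the moment--cumulant recursion \eqref{mom-boo}--\eqref{boo-mom}, using that $\widetilde{E}_\pi$ for an interval partition $\pi$ factors as a product over the blocks of $\pi$.
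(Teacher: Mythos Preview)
Your proposal is correct and follows precisely the approach the paper indicates: the paper does not spell out a proof but says it goes ``by a similar argument to Theorem 10 of \cite{tseng2019operator}'', which is exactly the upper-triangular reduction via Proposition~\ref{p1}, the identity $\beta^{\widetilde{\mathcal{B}}}_n=\widetilde{\beta}_n$, and the operator-valued vanishing-of-mixed-Boolean-cumulants criterion that you carry out. Your remark about why the shifted summands in $(2)\Rightarrow(1)$ still vanish is the only subtle point, and you handle it correctly.
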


\begin{rem}
The scalar version of Theorem \ref{thm3} specializes to Theorem 3.7 of \cite{Has}, and it can be applied to test if random matrix models which are asymptotically Boolean independent are also infinitesimally Boolean. In the Appendix we study two of these models. The first model is from the Section 5 of \cite{MLE}, and the second model comes from Section 7 of \cite{lenczewski2014limit}. It turns out that even though both models are asymptotically Boolean independent, they are not asymptotically infinitesimally Boolean independent. \end{rem}


Cumulants allow us to carry over scalar-valued infinitesimal Boolean independence into a OVI setting. We will prove now Theorem \ref{thm.relation.inf.boo.matrices} that asserts the following. Suppose that $(\mathcal{M},\varphi,\varphi')$ is an infinitesimal probability space. For a fixed $N\in\mathbb{N}$ define the corresponding OVI probability space $(\mathcal{A},\mathcal{B},E,E')$ where 
\begin{equation*}
\mathcal{A}=M_N(\mathcal{M}),\ \ \mathcal{B}=M_N(\mathbb{C}),\ \ E=\varphi\otimes Id_N,\ \ E'=\varphi'\otimes Id_N.     
\end{equation*}

Now, if the sets $\{a^{(1)}_{i,j}\colon1\le i,j\le N\},\dots, \{a^{(k)}_{i,j}\colon1\le i,j\le N\}$ are infinitesimally Boolean in $\mathcal{M}$, then the elements $A^{(1)}:=[a^{(1)}_{i,j}]_{i,j=1}^N,\dots, A^{(k)}:=[a^{(k)}_{i,j}]_{i,j=1}^N$ are infinitesimally Boolean in $M_N(\mathcal{M})$ over $M_N(\mathbb{C})$.

\begin{proof}[Proof of Theorem \ref{thm.relation.inf.boo.matrices}]
Notice that it suffices to show that for all $h\geq 2$,
\begin{equation}
\label{eq.bool.scalar.operator}
\beta^{\mathcal{B}}_h(A_{i_1},A_{i_2},\ldots,A_{i_h})=\partial\beta^{\mathcal{B}}_h(A_{i_1},A_{i_2},\ldots,A_{i_h})=0
\end{equation}
whenever $i_1,i_2,\ldots,i_h\in [k]$ are not all equal. Now, 
we recall that Boolean cumulants in $(\mathcal{A},\mathcal{B},E,E')$ can be written in terms of Boolean cumulants in $(\mathcal{M},\varphi,\varphi')$. Indeed, suppose that $X^{(k)}=[x_{i,j}^{(k)}]$ are elements in $\mathcal{A}=M_N(\mathcal{M})$ for all $k=1,\dots,n$. Then,
\begin{equation}\label{booeqn1}
[\beta^{\mathcal{B}}_n(X^{(1)},X^{(2)},\ldots ,X^{(n)})]_{i,j} = \sum\limits_{i_2,i_3,\ldots i_n=1}^{N}\beta_{n}(x_{i,i_2}^{(1)},x_{i_2,i_3}^{(2)}\ldots ,x_{i_n,j}^{(n)}).
\end{equation}
In addition, by \cite[Lemma 11]{tseng2019operator}, for $\pi \in \mathcal{I}(n)$ and $V\in \pi$, we have
\[
[\partial E_{\pi,V}(X^{(1)},X^{(2)},\ldots ,X^{(n)})]_{i,j}= \sum\limits_{i_2,i_3,\ldots,i_n=1}^{N}\partial\varphi_{\pi,V}(x_{i,i_2}^{(1)},x_{i_2,i_3}^{(2)},\ldots ,x_{i_n,j}^{(n)}).
\]
Therefore, 
\begin{eqnarray*}
[\partial\beta^{\mathcal{B}}_n(X^{(1)},X^{(2)},\ldots ,X^{(n)})]_{i,j}
    &=&\sum\limits_{\pi\in \mathcal{I}(n)}(-1)^{|\pi|-1}\sum\limits_{V\in \pi}[\partial E_{\pi,V}(X^{(1)},X^{(2)},\ldots ,X^{(n)})]_{i,j} \\
    &=&\sum\limits_{i_2,i_3,\ldots,i_n=1}^{N}\sum\limits_{\pi\in \mathcal{I}(n)}(-1)^{|\pi|-1}\sum\limits_{V\in \pi}\partial\varphi_{\pi,V}(x_{i,i_2}^{(1)},x_{i_2,i_3}^{(2)},\ldots ,x_{i_n,j}^{(n)}) \\
    &=&\sum\limits_{i_2,i_3,\ldots i_n=1}^{N}\beta'_{n}(x_{i,i_2}^{(1)},x_{i_2,i_3}^{(2)}\ldots ,x_{i_n,j}^{(n)}).
\end{eqnarray*}

Thus, we have the analogue relation for infinitesimal Boolean cumulants:
\begin{equation} \label{booeqn2}
[\partial\beta^{\mathcal{B}}_n(X^{(1)},X^{(2)},\ldots ,X^{(n)})]_{i,j}= \sum\limits_{i_2,i_3,\ldots i_n=1}^{N}\beta'_{n}(x_{i,i_2}^{(1)},x_{i_2,i_3}^{(2)}\ldots ,x_{i_n,j}^{(n)}).
\end{equation}

Going back to our problem,  since  $\{a^{(1)}_{i,j}\colon 1\le i,j\le N\},\dots, \{a^{(k)}_{i,j}\colon 1\le i,j\le N\}$ are infinitesimal Boolean independent, equations \eqref{booeqn1} and \eqref{booeqn2} yield
$$
[\beta^{\mathcal{B}}_n(A_{i_1},A_{i_2},\ldots,A_{i_k})]_{i,j}= [\partial\beta^{\mathcal{B}}_n(A_{i_1},A_{i_2},\ldots,A_{i_k})]_{i,j} = 0, \qquad \forall 1\le i,j\le N.
$$
Therefore we obtain \eqref{eq.bool.scalar.operator} and the desired result follows.
\end{proof}

\section{OVI Monotone Independence}
\label{sec:Monotone.indep}

In this section we extend the notion of OV monotone independence to the infinitesimal setting. We study the OVI Boolean convolution using infinitesimal Cauchy transform. Then we introduce the notion of OVI monotone cumulants and discuss whether there is a relation between scalar-valued infinitesimal and OVI monotone independence.

\subsection{OVI Monotone Convolution}

$\ $

\noindent The notion of OV monotone independence can be naturally extended to the infinitesimal setting as follows.
\begin{defn}
Suppose that $(\mathcal{A},\mathcal{B},E,E')$ is an OVI probability space, and assume that $I$ is equipped with a linear order $<$. Subalgebras $(\mathcal{A}_i)_{i\in I}$ that contain $\mathcal{B}$ are called \emph{infinitesimally monotone over $\mathcal{B}$} if 
\begin{eqnarray}
E(a_1\cdots a_{j-1}a_j a_{j+1}\cdots a_n) &=& E(a_1\cdots a_{j-1}E(a_j)a_{j+1}\cdots a_n) ; \label{eqn.ov.monotone}\\
E'(a_1\cdots a_{j-1}a_j a_{j+1}\cdots a_n) &=&  E(a_1\cdots a_{j-1}E'(a_j)a_{j+1}\cdots a_n) +  E'(a_1\cdots a_{j-1}E(a_j)a_{j+1}\cdots a_n). \label{eqn.ov.inf.monotone}
\end{eqnarray}
whenever $a_j\in\mathcal{A}_{i_j}, i_j\in I$ for all $j$ and $ i_{j-1} < i_j > i_{j+1}$, where one of the inequalities is eliminated if $j=1$ or $j=n$.
\end{defn}

As in the free and Boolean cases, OVI monotone independence can be nicely phrased if we consider the associated upper triangular probability space.

\begin{prop}\label{p2}
Suppose that $(\mathcal{A},\mathcal{B},E,E')$ is an OVI probability space, subalgebras $(\mathcal{A}_i)_{i\in I}$ are infinitesimally monotone independent over $\mathcal{B}$ if and only if $(\widetilde{\mathcal{A}}_i)_{i\in I}$ are monotone independent over $\widetilde{\mathcal{B}}$.
\end{prop}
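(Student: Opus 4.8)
The plan is to mirror the proof of Proposition \ref{p1} almost verbatim, replacing the Boolean condition by the monotone condition. The key tool is the observation that for $A_j=\begin{bmatrix} a_j & a_j' \\ 0 & a_j\end{bmatrix}\in\widetilde{\mathcal{A}}_{i_j}$, the entries of a product $\widetilde{E}(A_1\cdots A_n)$ decompose into $E$-terms on the diagonal and a mix of $E$-terms-with-one-primed-entry plus an $E'$-term in the $(1,2)$-corner, exactly as recorded in the formula used in the proof of Proposition \ref{p1} (indeed this is just Lemma \ref{lemma10.Tseng} applied with $f=E$, or rather the $n=1_n$ instance of it after using multiplicativity of $\widetilde{E}$). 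So first I would fix $a_1,\dots,a_n$ with $a_j\in\mathcal{A}_{i_j}$ and an index $j$ with $i_{j-1}<i_j>i_{j+1}$, lift each $a_k$ to $A_k$, and write out both sides of the monotone identity $\widetilde{E}(A_1\cdots A_n)=\widetilde{E}(A_1\cdots A_{j-1}\widetilde{E}(A_j)A_{j+1}\cdots A_n)$ in matrix form.

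Next I would carry out the forward direction: assuming \eqref{eqn.ov.monotone} and \eqref{eqn.ov.inf.monotone}, I compute the $(1,1)$-entry of $\widetilde{E}(A_1\cdots A_n)$, which is $E(a_1\cdots a_n)$, and the $(1,2)$-entry, which is $E'(a_1\cdots a_n)+\sum_{k=1}^n E(a_1\cdots a_{k-1}a_k'a_{k+1}\cdots a_n)$. Using \eqref{eqn.ov.monotone} on each $E$-with-one-primed-entry term (note that replacing $a_j$ by $a_j'$ leaves the peak structure intact, and replacing some other $a_k$ by $a_k'$ does not change the position of the peak at $j$) and using \eqref{eqn.ov.inf.monotone} on the $E'$-term, I rewrite the $(1,2)$-entry as the corresponding $(1,2)$-entry of $\widetilde{E}(A_1\cdots A_{j-1}\widetilde{E}(A_j)A_{j+1}\cdots A_n)$; here I must check the bookkeeping that $\widetilde{E}(A_j)=\begin{bmatrix}E(a_j) & E'(a_j)+E(a_j')\\0 & E(a_j)\end{bmatrix}$ produces exactly the terms $E(\cdots E'(a_j)\cdots)$, $E(\cdots E(a_j')\cdots)$ and $E'(\cdots E(a_j)\cdots)$ after expanding. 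This is the place where the argument is slightly more delicate than in the Boolean case, since the monotone axiom is a single-peak reduction rather than a full factorization, so the primed entry can sit at the peak, strictly left of it, or strictly right of it, and each case has to be matched against the expansion of the right-hand matrix product. This case analysis, together with making sure the peak condition $i_{j-1}<i_j>i_{j+1}$ is genuinely all that is used (and that the end-cases $j=1$, $j=n$ are handled by dropping the appropriate inequality), is the main obstacle, though it is purely bookkeeping.

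For the converse I would run the specialization trick from Proposition \ref{p1}: given $a_1,\dots,a_n$ and a peak at $j$, set $A_k=\begin{bmatrix}a_k & 0\\0 & a_k\end{bmatrix}$, apply monotone independence of $(\widetilde{\mathcal{A}}_i)$ over $\widetilde{\mathcal{B}}$ to get $\widetilde{E}(A_1\cdots A_n)=\widetilde{E}(A_1\cdots A_{j-1}\widetilde{E}(A_j)A_{j+1}\cdots A_n)$, and then read off the $(1,1)$- and $(1,2)$-entries. The $(1,1)$-entry gives \eqref{eqn.ov.monotone}, and since all the $a_k'$ vanish the $(1,2)$-entry of the left side is just $E'(a_1\cdots a_n)$ while the $(1,2)$-entry of the right side, after using $\widetilde{E}(A_j)=\begin{bmatrix}E(a_j) & E'(a_j)\\0 & E(a_j)\end{bmatrix}$, is $E'(a_1\cdots a_{j-1}E(a_j)a_{j+1}\cdots a_n)+E(a_1\cdots a_{j-1}E'(a_j)a_{j+1}\cdots a_n)$, which is exactly \eqref{eqn.ov.inf.monotone}. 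I would close by remarking that one must check $\widetilde{E}$ is a genuine conditional expectation onto $\widetilde{\mathcal{B}}$ and that the linear order on $I$ transports unchanged to $(\widetilde{\mathcal{A}}_i)_{i\in I}$, both of which are immediate from the definitions.
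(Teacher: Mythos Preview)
Your proposal is correct and follows essentially the same route as the paper: both directions proceed exactly as you describe, with the forward direction handled by expanding $\widetilde{E}(A_1\cdots A_{j-1}\widetilde{E}(A_j)A_{j+1}\cdots A_n)$ entrywise (the paper records this as its displayed equation \eqref{long.formula}) and matching terms via \eqref{eqn.ov.monotone} and \eqref{eqn.ov.inf.monotone}, and the converse by the specialization $a_k'=0$. The bookkeeping you flag---splitting according to whether the primed entry sits at the peak or elsewhere---is precisely what the paper does in the second equality of \eqref{long.formula}.
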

\begin{proof}
Suppose that $(\mathcal{A}_i)_{i\in I}$ are infinitesimally monotone independent over $\mathcal{B}$, and we let $A_1,\dots, A_n\in \widetilde{\mathcal{A}}$ with each 
$A_j=
\begin{bmatrix}
a_j & a_j'\\
0 & a_j
\end{bmatrix} \in \widetilde{\mathcal{A}}_{i_j}$ such that $i_1,\dots ,i_n\in I$ with $i_{j-1} < i_j > i_{j+1}$. We let $M=\widetilde{E}(A_1\cdots A_{j-1}\widetilde{E}(A_j)A_{j+1}\cdots A_n)$, and then we shall show that $\widetilde{E}(A_1\cdots A_n)=M.$ \\
Since $\widetilde{E}(A_j)=\begin{bmatrix}
E(a_j) & E'(a_j)+E(a_j')\\
0 & E(a_j)
\end{bmatrix}$, the diagonal entries $M_{11}=M_{22}$ of $M$ are equal to
$$M_{11}=E(a_1\cdots a_{j-1}E(a_j)a_{j+1}\cdots a_n)=E(a_1\cdots  a_n).$$

On the other hand, the $(1,2)$-entry of $M$ is

\begin{align}
M_{12}&= E'(a_1\cdots a_{j-1}E(a_j)a_{j+1}\cdots a_n)  + E\Big(a_1\cdots a_{j-1}\Big( E'(a_j)+E(a_j')\Big) a_{j+1}\cdots a_n\Big) \label{long.formula} \\ 
 &+\sum_{\substack{1\leq k\leq n \\  j\neq k}} E(a_1\cdots a_{j-1}E(a_j)a_{j+1}\cdots a_{k-1}a_{k}'a_{k+1}\cdots a_n)  \nonumber \\
 &=  E(a_1\cdots a_{j-1}E'(a_j)a_{j+1}\cdots a_n) +  E'(a_1\cdots a_{j-1}E(a_j)a_{j+1}\cdots a_n) \nonumber \\
 & +  \sum_{\substack{1\leq k\leq n \\  j\neq k}} E(a_1\cdots a_{j-1}E(a_j)a_{j+1}\cdots a_{k-1}a_k'a_{k+1}\cdots a_n) +
 E(a_1\cdots a_{j-1}E(a_j')a_{j+1}\cdots a_n) \nonumber \\
 &= E'(a_1\cdots a_n) + \sum\limits_{k=1}^n E(a_1\cdots a_{k-1}a'_ka_{k+1}\cdots a_n) \nonumber,
\end{align}
where the second equality is just a simple manipulation and in the last equality we used formulas \eqref{eqn.ov.monotone} and \eqref{eqn.ov.inf.monotone}. Therefore we conclude that

$$
\widetilde{E}( A_1\cdots A_n ) 
= \begin{bmatrix}
E(a_1\cdots a_n) & E'(a_1\cdots a_n) + \sum\limits_{k=1}^n E(a_1\cdots a_{k-1}a'_ka_{k+1}\cdots a_n) \\
0 & E(a_1\cdots a_n)
\end{bmatrix} = \begin{bmatrix}
M_{11} & M_{12} \\ 0 & M_{22}
\end{bmatrix} = M.
$$

Conversely, suppose that $(\widetilde{\mathcal{A}}_i)_{i\in I}$ are monotone independent over $\widetilde{\mathcal{B}}$. Let $a_1\cdots,a_n\in\mathcal{A}$ such that $a_j\in\mathcal{A}_{i_j}$ with $i_1,\dots ,i_n\in I$ with $i_{j-1} < i_j > i_{j+1}$.  For each $j$, we let $A_j=\begin{bmatrix}
a_j & 0 \\
0 & a_j
\end{bmatrix}$. By our assumption, we have
\begin{equation}\label{meqn2}
\widetilde{E}(A_1\cdots A_n)=\widetilde{E}(A_1\cdots A_{j-1}\widetilde{E}(A_j)A_{j+1}\cdots A_n).
\end{equation}
The left hand of equation \eqref{meqn2} is 
$$
\widetilde{E}\Bigg(
\begin{bmatrix}
a_1\cdots a_n & 0 \\
0 & a_1\cdots a_n
\end{bmatrix}
\Bigg)  = 
\begin{bmatrix}
E(a_1\cdots a_n) & E'(a_1\cdots a_n) \\
0 & E(a_1\cdots a_n)
\end{bmatrix}.
$$ On the other hand, the right hand side of equation \eqref{meqn2} is 
$$ \begin{bmatrix}
E(a_1\cdots a_{j-1}E(a_j)a_{j+1}\cdots a_n) & E'(a_1\cdots a_{j-1}E(a_j)a_{j+1}\cdots a_n )+E(a_1\cdots a_{j-1}E'(a_j)a_{j+1}\cdots a_n ) \\
0 & E(a_1\cdots a_{j-1}E(a_j)a_{j+1}\cdots a_n )
\end{bmatrix},$$
where we can use \eqref{long.formula} specialized to $a'_1=\dots a'_n=0$ to easily compute the $(1,2)$-entry. Therefore, by checking equality \eqref{meqn2} in each entry we obtain \eqref{eqn.ov.monotone} and \eqref{eqn.ov.inf.monotone}.
\end{proof}

We are now able to prove Theorem \ref{Thm.monotone.add.conv}  describing the OVI monotone convolution using the infinitesimal Cauchy transform.

\begin{proof}[Proof of Theorem \ref{Thm.monotone.add.conv}]
Suppose that $(\mathcal{A},\mathcal{B},E,E')$ is a $C^*$-operator-valued infinitesimal probability space, and $x,y\in\mathcal{A}$ are selfadjoint random varaibles that are infinitesimally monotone independent over $\mathcal{B}$. Let $X=
\begin{bmatrix}
x & 0 \\
0 & x
\end{bmatrix}$ and $Y=
\begin{bmatrix}
y & 0 \\
0 & y
\end{bmatrix}$. By Proposition \ref{p2}, $X$ and $Y$ are monotone independent over $\widetilde{\mathcal{B}}$.
By \eqref{Meqn}, we have
\begin{equation} \label{eqn3}
F_{X+Y}\Bigg (
\begin{bmatrix}
b & c \\
0 & b
\end{bmatrix}\Bigg )  \\
= 
F_X\Bigg (F_Y\Bigg (
\begin{bmatrix}
b & c \\
0 & b
\end{bmatrix}\Bigg ) \Bigg).
\end{equation}
Note that 
\begin{eqnarray*}
& & F_X\Bigg (F_Y\Bigg (
\begin{bmatrix}
b & c \\
0 & b
\end{bmatrix}\Bigg ) \Bigg)  
=
F_X\Bigg (G_Y\Bigg (
\begin{bmatrix}
b & c \\
0 & b
\end{bmatrix}\Bigg )^{-1} \Bigg)  
=
F_X\Bigg ( \begin{bmatrix}
G_y(b) & G'_y(b)c+g_y(b) \\
0 & G_y(b)
\end{bmatrix}^{-1} \Bigg )  \\
&=& F_X \Bigg ( \begin{bmatrix}
F_y(b) & -F_y(b) (G_y'(b)c+g_y(b))F_y(b) \\
0 & F_y(b)
\end{bmatrix} \Bigg ) \\
&=&
\begin{bmatrix}
F_x(F_y(b)) & -F_x(F_y(b))\Big (G_x'(F_y(b))(-F_y(b)(G_y'(b)c+g_y(b))F_y(b)))+g_x(F_y(b)) \Big )F_x(F_y(b)) \\
0 & F_x(F_y(b))
\end{bmatrix}.
\end{eqnarray*}
Comparing to the $(1,2)$-entry of \eqref{eqn3}, we obtain
\begin{equation*}
-F_{x+y}(b)g_{x+y}(b)F_{x+y}(b) = -F_x(F_y(b))\Big(G_x'(F_y(b))(-F_y(b)g_y(b)F_y(b))+g_x(F_y(b)) \Big)F_x(F_y(b)).
\end{equation*}
Thus, 
\begin{equation}
g_{x+y}(b) = G_x'(F_y(b))(-F_y(b)g_y(b)F_y(b))+g_x(F_y(b)).
\end{equation}
\end{proof}

\begin{rem}
When $\mathcal{B}=\mathbb{C}$, we obtain the scalar version of infinitesimal Boolean convolution. Precisely, if $x=x^*$ and $y=y^*$ are infinitesimally monotone from a $C^*$-infinitesimal probability space $(\mathcal{A},\varphi,\varphi')$, we have
$$
g_{x+y}(z)=-g_y(z)F_y(z)^2 G_x'(F_y(z))+g_x(F_y(z)).
$$
\end{rem}

\subsection{OVI Monotone Cumulants}

$\ $

\noindent OVI monotone cumulants are defined using the OV monotone cumulants as follows

\begin{defn}
Suppose that $(\mathcal{A},\mathcal{B},E,E')$ is an OVI probability space, with operator-valued monotone cumulants $\{h_n^{\mathcal{B}}:\mathcal{A}^n\to \mathcal{B}\}$ (see equation \eqref{mom-mon}). We define the \emph{operator-valued infinitesimally monotone cumulants} $\{\partial h_n^{\mathcal{B}}:\mathcal{A}^n\to \mathcal{B}\}$ to be the (unique) family of functions such that for all $n\in\mathbb{N}$ and $a_1,\dots, a_n\in\mathcal{A}$ 
\begin{equation}
\label{inf.mom-mon}
E'(a_1\cdots a_n) =\sum_{\pi\in\mathcal{NC}(n)} \frac{1}{\tau(\pi)!} \partial h_\pi^{\mathcal{B}}(a_1,\dots,a_n).
\end{equation}
\end{defn}

This equation is actually defining (uniquely) the infinitesimal cumulants, since in the right hand side we can isolate the term $h'_n(a_1,\dots,a_n)$. Thus we can obtain $h'_n$ recursively from $E'$, the cumulants $h_1,\dots,h_{n-1},$ and the previously defined $h'_1,\dots,h'_{n-1}$. 

\begin{rem}
Similar to the Boolean and free case, if we consider the operator-valued monotone cumulants $\{h^{\widetilde{\mathcal{B}}}_n:\widetilde{\mathcal{A}}^n\to \widetilde{\mathcal{B}}\}$ of the corresponding upper triangular operator-valued probability space $(\widetilde{\mathcal{A}},\widetilde{\mathcal{B}},\widetilde{E})$, we get that $h^{\widetilde{\mathcal{B}}}_n=\widetilde{h}_n$ where $\widetilde{h}_n$ are the multilinear functional obtained from merging the OV and OVI cumulants of $(\mathcal{A},\mathcal{B},E,E')$ as in Notation \ref{nota.tilde.cumulant}. From now on we will use $h^{\tilde{\mathcal{B}}}_n$ or $\widetilde{h}_n$ indistinctly to refer to monotone cumulants of the associate upper triangular probability space.
\end{rem}


In analogy to the relation holding between Scalar-Valued and OV infinitesimal independences (Theorem 13 of \cite{tseng2019operator} in the free case and Theorem \ref{thm.relation.inf.boo.matrices} in the Boolean case), we might wonder if the same is true for the monotone case:

\begin{conj}
\label{conj.rel.scalar.OVI.monotone}
Suppose that $(\mathcal{M},\varphi,\varphi')$ is an infinitesimal probability space.  
If the sets $\{a^{(1)}_{i,j}\colon1\le i,j\le N\},\dots, \{a^{(k)}_{i,j}\colon1\le i,j\le N\}$ are infinitesimally monotone in $\mathcal{M}$, then the elements $[a^{(1)}_{i,j}]_{i,j=1}^N,$ $\dots,[a^{(k)}_{i,j}]_{i,j=1}^N$ are infinitesimally monotone in $M_n(\mathcal{M}).$
\end{conj}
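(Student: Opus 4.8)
The plan is to sidestep the cumulant approach entirely. In the Boolean case (Theorem~\ref{thm.relation.inf.boo.matrices}) the argument ran through the vanishing of mixed OVI Boolean cumulants, using the entry-wise identities \eqref{booeqn1}--\eqref{booeqn2}; this route is unavailable for the monotone conjecture, since monotone cumulants do not vanish on mixed arguments and there is no comparably clean ``vanishing of mixed cumulants'' test for (infinitesimal) monotone independence. Instead I would use the algebra isomorphism $M_N(\mathcal{M})\cong M_N(\mathbb{C})\otimes_{\mathbb{C}}\mathcal{M}$, under which $E$ becomes $\operatorname{Id}_{M_N(\mathbb{C})}\otimes\varphi$ and $E'$ becomes $\operatorname{Id}_{M_N(\mathbb{C})}\otimes\varphi'$. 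Setting $\mathcal{M}_l:=alg(1,\{a^{(l)}_{i,j}\colon 1\le i,j\le N\})$, one checks that the unital subalgebra of $M_N(\mathcal{M})$ generated by $M_N(\mathbb{C})$ together with $A^{(l)}=[a^{(l)}_{i,j}]$ equals $M_N(\mathbb{C})\otimes\mathcal{M}_l$; the nontrivial inclusion follows from $1\otimes a^{(l)}_{p,q}=\sum_{k=1}^N e_{k,p}A^{(l)}e_{q,k}$. So the conjecture reduces to the statement: if $\mathcal{M}_1,\dots,\mathcal{M}_k$ are infinitesimally monotone in $(\mathcal{M},\varphi,\varphi')$, then $M_N(\mathbb{C})\otimes\mathcal{M}_1,\dots,M_N(\mathbb{C})\otimes\mathcal{M}_k$ are infinitesimally monotone over $M_N(\mathbb{C})$.

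To prove that reduced statement I would verify the two defining relations \eqref{eqn.ov.monotone} and \eqref{eqn.ov.inf.monotone} directly. By multilinearity it is enough to test them on elementary tensors $w_l=M_l\otimes m_l$ with $m_l\in\mathcal{M}_{i_l}$ and $i_{j-1}<i_j>i_{j+1}$. Then $E(w_1\cdots w_n)=(M_1\cdots M_n)\otimes\varphi(m_1\cdots m_n)$, and scalar monotone independence gives $\varphi(m_1\cdots m_n)=\varphi(m_j)\,\varphi(m_1\cdots\widehat{m_j}\cdots m_n)$; since $\varphi(m_j)\in\mathbb{C}$ is central and $E(w_j)=\varphi(m_j)(M_j\otimes 1)$, this rearranges to exactly $E(w_1\cdots w_{j-1}E(w_j)w_{j+1}\cdots w_n)$. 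For the infinitesimal relation, $E'(w_1\cdots w_n)=(M_1\cdots M_n)\otimes\varphi'(m_1\cdots m_n)$, and scalar infinitesimal monotone independence expands $\varphi'(m_1\cdots m_n)$ as $\varphi'(m_j)\,\varphi(m_1\cdots\widehat{m_j}\cdots m_n)+\varphi(m_j)\,\varphi'(m_1\cdots\widehat{m_j}\cdots m_n)$; using centrality of the scalars $\varphi(m_j),\varphi'(m_j)$ together with $E(w_j)=\varphi(m_j)(M_j\otimes 1)$ and $E'(w_j)=\varphi'(m_j)(M_j\otimes 1)$, the two summands become $E(w_1\cdots E'(w_j)\cdots w_n)$ and $E'(w_1\cdots E(w_j)\cdots w_n)$, which is precisely \eqref{eqn.ov.inf.monotone}. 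Alternatively one could route this through Proposition~\ref{p2}: reduce first to the non-infinitesimal statement in the upper-triangular space $\widetilde{\mathcal{M}}$ and then invoke $M_N(\widetilde{\mathbb{C}})\cong\widetilde{M_N(\mathbb{C})}$ — essentially the same computation, repackaged.

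The step I would expect to cause the most trouble is bookkeeping rather than conceptual. One must confirm that the scalar notion of infinitesimal monotone independence from \cite{Has} coincides with the ``differentiated peeling'' relation used above (it does: differentiating $\varphi_t(a_1\cdots a_{j-1}\varphi_t(a_j)a_{j+1}\cdots a_n)$ at $t=0$ produces exactly $\varphi(a_1\cdots\varphi'(a_j)\cdots a_n)+\varphi'(a_1\cdots\varphi(a_j)\cdots a_n)$), and one must track the index pattern carefully when two of the $i_l$ coincide after a local-max block is peeled off, so that the reduced products $\varphi(m_1\cdots\widehat{m_j}\cdots m_n)$ and $\varphi'(m_1\cdots\widehat{m_j}\cdots m_n)$ are still evaluated on the subalgebras $\mathcal{M}_{i_l}$. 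Once the definition is pinned down, no genuine obstruction seems to remain, and this direct argument should settle the conjecture; the moral being that for the monotone analogue — in contrast with the Boolean one — the natural proof goes through the algebraic/tensor structure rather than through cumulants.
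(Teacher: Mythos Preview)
The paper does not prove this statement; it is left as Conjecture~\ref{conj.rel.scalar.OVI.monotone}. The authors explicitly note that the cumulant route used for Theorem~\ref{thm.relation.inf.boo.matrices} breaks down (monotone independence has no vanishing-of-mixed-cumulants characterization), prove only the entry-wise identity \eqref{beqn2} as partial evidence, and then write that they ``suspect that Conjecture~\ref{conj.rel.scalar.OVI.monotone} can be proved using the equations defining OVI monotone independence in a clever way.'' Your proposal is precisely a proof of that kind, so there is nothing in the paper to compare it against except that one-line hint.

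As for correctness: your argument looks sound. The identification $M_N(\mathcal{M})\cong M_N(\mathbb{C})\otimes\mathcal{M}$ with $E=\mathrm{Id}\otimes\varphi$, $E'=\mathrm{Id}\otimes\varphi'$ is standard, and the equality $\mathrm{alg}(M_N(\mathbb{C}),A^{(l)})=M_N(\mathbb{C})\otimes\mathcal{M}_l$ via $1\otimes a^{(l)}_{p,q}=\sum_{k}e_{k,p}A^{(l)}e_{q,k}$ is easily verified. Both sides of \eqref{eqn.ov.monotone} and \eqref{eqn.ov.inf.monotone} are genuinely multilinear in the arguments $w_1,\dots,w_n$, so reduction to elementary tensors is legitimate; once there, the scalar relations for $\varphi,\varphi'$ on the $\mathcal{M}$-factor combine with centrality of $\varphi(m_j),\varphi'(m_j)\in\mathbb{C}$ to give exactly the operator-valued relations, as you write. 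The only loose end you yourself flag --- matching the scalar infinitesimal monotone definition of \cite{Has} with the $\mathcal{B}=\mathbb{C}$ specialization of Definition~4.1 --- is a matter of unpacking definitions and presents no obstacle. Your remark about index patterns after peeling is not actually needed: you are verifying a single instance of the defining relation, not iterating it, so whether $i_{j-1}=i_{j+1}$ after removal is irrelevant.

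In short, you have supplied the direct argument the authors anticipated but did not carry out, and it settles the conjecture.
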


However, as opposed to Boolean and free cases, monotone independence cannot be characterized in terms of vanishing of monotone cumulants, and this was a key step to prove the analogue results. Therefore the approach using cumulants is not useful here. The best we can say in this direction is that the following proposition still holds in the monotone setting.

\begin{prop}
Suppose $A^{(k)}=[a_{i,j}^{(k)}]$ are elements in $\mathcal{A}=M_N(\mathcal{M})$ for all $k=1,\dots,n$. Then the following formula holds:
\begin{equation} \label{beqn2}
[\partial h^{\mathcal{B}}_n(A^{(1)},A^{(2)},\ldots ,A^{(n)})]_{i,j}= \sum\limits_{i_2,i_3,\ldots i_n=1}^{N}h'_{n}(a_{i,i_2}^{(1)},a_{i_2,i_3}^{(2)}\ldots ,a_{i_n,j}^{(n)}).
\end{equation}
\end{prop}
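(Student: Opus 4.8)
The plan is to prove the stated formula \eqref{beqn2} by mimicking the argument given for the Boolean cumulants in the proof of Theorem \ref{thm.relation.inf.boo.matrices}, replacing the interval-partition sum defining Boolean cumulants with the weighted non-crossing-partition recursion that defines the monotone cumulants. Concretely, the key structural ingredients are: (a) the known matricial compatibility of the \emph{ordinary} operator-valued monotone cumulants, namely that for $X^{(k)}=[x^{(k)}_{i,j}]\in M_N(\mathcal{M})$ one has
\begin{equation*}
[h^{\mathcal{B}}_n(X^{(1)},\ldots,X^{(n)})]_{i,j}=\sum_{i_2,\ldots,i_n=1}^N h_n(x^{(1)}_{i,i_2},x^{(2)}_{i_2,i_3},\ldots,x^{(n)}_{i_n,j}),
\end{equation*}
which follows from \cite{HS14} together with the multiplicative structure in Definition \ref{partitions.multiplicative}; and (b) the entrywise formula for $\partial E_{\pi,V}$ from \cite[Lemma 11]{tseng2019operator}, exactly as used in the Boolean proof, which gives
\begin{equation*}
[\partial E_{\pi,V}(X^{(1)},\ldots,X^{(n)})]_{i,j}=\sum_{i_2,\ldots,i_n=1}^N \partial\varphi_{\pi,V}(x^{(1)}_{i,i_2},\ldots,x^{(n)}_{i_n,j}).
\end{equation*}

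First I would record these two facts, and then set up an induction on $n$ (equivalently, on the recursion that defines $\partial h^{\mathcal{B}}_n$ from \eqref{inf.mom-mon}). The defining relation \eqref{inf.mom-mon} lets us isolate $\partial h^{\mathcal{B}}_n$:
\begin{equation*}
\partial h^{\mathcal{B}}_n(A^{(1)},\ldots,A^{(n)})=\partial E_n(A^{(1)},\ldots,A^{(n)})-\sum_{\substack{\pi\in\mathcal{NC}(n)\\ \pi\neq 1_n}}\frac{1}{\tau(\pi)!}\,\partial h^{\mathcal{B}}_\pi(A^{(1)},\ldots,A^{(n)}),
\end{equation*}
where $\partial h^{\mathcal{B}}_\pi=\sum_{V\in\pi}\partial h^{\mathcal{B}}_{\pi,V}$ in the sense of Notation \ref{notation.partial}. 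Using fact (a) for all the ordinary monotone-cumulant blocks appearing inside $\partial h^{\mathcal{B}}_{\pi,V}$, the induction hypothesis applied to the (strictly smaller) distinguished block $V$ when $|V|<n$, and the fact that the entrywise index contractions (the nested sums over the $i_\ell$'s) factor through exactly the same block structure that governs $\partial h_{\pi,V}$ in the scalar space, every term on the right-hand side can be rewritten as a sum over $i_2,\ldots,i_n$ of the corresponding scalar quantity $\partial\varphi$-style expression. Matching this against the scalar analogue of \eqref{inf.mom-mon} inside $(\mathcal{M},\varphi,\varphi')$ — which likewise isolates $h'_n(x^{(1)}_{i,i_2},\ldots,x^{(n)}_{i_n,j})$ — and invoking the uniqueness of the solution of that recursion gives the claimed identity \eqref{beqn2}.

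Alternatively, and perhaps more cleanly, one can sidestep the explicit recursion by working in the upper triangular space: from the remark identifying $h^{\widetilde{\mathcal{B}}}_n=\widetilde{h}_n$, apply fact (a) to the ordinary OV monotone cumulants of the $C^*$-OV probability space $(M_N(\widetilde{\mathcal{M}}),M_N(\widetilde{\mathcal{B}}),\widetilde{E}\otimes \mathrm{Id}_N)$ — noting $M_N(\widetilde{\mathcal{M}})\cong \widetilde{M_N(\mathcal{M})}$ and similarly for the subalgebras and conditional expectations — and then read off the $(1,2)$-block via Lemma \ref{lemma10.Tseng}. Comparing the $(1,2)$-entry on both sides (after setting all primed entries to zero, as in the preceding remarks) directly yields \eqref{beqn2} without a separate induction.

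The main obstacle I anticipate is bookkeeping: one must be careful that the entrywise index summation $\sum_{i_2,\ldots,i_n}$ genuinely commutes with the block-nesting operations hidden in $\partial h^{\mathcal{B}}_{\pi,V}$ — i.e.\ that forming a block in $M_N(\mathcal{M})$ and then contracting indices agrees with contracting indices first and forming the scalar block, uniformly over which block $V$ is distinguished. This is the matricial analogue of the statement that $f_\pi$ is independent of the chosen interval block in Definition \ref{partitions.multiplicative}, and it is essentially what \cite[Lemma 11]{tseng2019operator} and fact (a) encapsulate; assembling them correctly for the monotone weights $\tfrac{1}{\tau(\pi)!}$ is the only delicate point. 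There is no genuine positivity or analytic input needed, so the proof should go through at the purely algebraic level.
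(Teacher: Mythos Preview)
Your proposal is correct; both routes you sketch lead to \eqref{beqn2}. The paper's own argument follows the Boolean template more literally than either of your variants: it uses that the monotone moment--cumulant relation \eqref{mom-mon} is triangular and hence invertible, so there exist (not explicit) coefficients $c_\pi$ with $h_n=\sum_{\pi\in NC(n)} c_\pi\, E_\pi$ and, correspondingly, $\partial h^{\mathcal{B}}_n=\sum_{\pi\in NC(n)} c_\pi\, \partial E_\pi$. One then plugs in the entrywise identity $[\partial E_{\pi,V}(A^{(1)},\ldots,A^{(n)})]_{i,j}=\sum_{i_2,\ldots,i_n}\partial\varphi_{\pi,V}(a^{(1)}_{i,i_2},\ldots,a^{(n)}_{i_n,j})$ from \cite{tseng2019operator}, interchanges sums, and recognises the scalar $h'_n$ on the right. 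This is a one-line computation once the existence of the $c_\pi$ is granted.

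Your inductive approach trades that existence statement for a direct unwinding of the recursion \eqref{inf.mom-mon}; this is slightly longer but has the virtue of never invoking the unknown inversion coefficients. Your upper-triangular approach is the cleanest conceptually: it reduces the OVI statement to a single application of the ordinary OV matricial formula in $(M_N(\widetilde{\mathcal{M}}),M_N(\widetilde{\mathbb{C}}),\widetilde{\varphi}\otimes\mathrm{Id}_N)$, and then Lemma \ref{lemma10.Tseng} does the rest. The only care needed there is tracking the isomorphism $\widetilde{M_N(\mathcal{M})}\cong M_N(\widetilde{\mathcal{M}})$ and checking that $\widetilde{E}$ corresponds to $\widetilde{\varphi}\otimes\mathrm{Id}_N$ under it, which is immediate from the definitions.
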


The proof follows the same lines of the free and Boolean case. Although, cumulants approach is not very useful we suspect that Conjecture \ref{conj.rel.scalar.OVI.monotone} can be proved using the equations defining OVI monotone independence in a clever way.

\section{Infinitesimal Central Limit Theorem}
\label{sec:InfCLT}
This section has two parts, first we discuss the operator-valued infinitesimal version of the Central Limit Theorem \ref{OVCLT} (OVI CLT), and then we specialize to the scalar-valued case. 

\subsection{Operator-Valued Infinitesimal CLT}

$\ $

\noindent As usual, the main idea is to consider the upper triangular space $(\widetilde{\mathcal{A}},\widetilde{\mathcal{B}},\widetilde{E})$ and then apply the tools we already know from the OV setting. The extra challenge now is that we require $\widetilde{\mathcal{A}}$ to be a Banach algebra (see Remark \ref{rem.CLT.Banach}). Therefore, given a Banach algebra $\mathcal{A}$, we need to equip $\widetilde{\mathcal{A}}$ with a suitable topology. This can be easily handled by defining the norm $\Vert \cdot \Vert_{\widetilde{\mathcal{A}}}$ on $\widetilde{\mathcal{A}}$ as 
$$ \left\Vert \begin{bmatrix}a & a' \\ 0 & a \end{bmatrix} \right\Vert_{\widetilde{\mathcal{A}}}:= \Vert a\Vert_{\mathcal{A}} + \Vert a' \Vert_{\mathcal{A}} \qquad \forall  a,a'\in \mathcal{A}.$$
We will omit the subindex in the norm, as it is clear that we always use $\Vert \cdot \Vert_{\widetilde{\mathcal{A}}}$ on upper triangular matrices and $\Vert \cdot \Vert_{\mathcal{A}}$ on elements of $\mathcal{A}$.

\begin{rem}
Let us show that $( \widetilde{\mathcal{A}},\Vei \cdot \Ved )$ is indeed a Banach algebra. It is clear that $\Vert \cdot \Vert_{\widetilde{\mathcal{A}}}$ is a norm on $\widetilde{\mathcal{A}}$. moreover, note that for every two elements $A=\begin{bmatrix}a & a' \\ 0 & a \end{bmatrix}$ and $B=\begin{bmatrix}b & b' \\ 0 & b \end{bmatrix}$ in $\widetilde{\mathcal{A}}$, the required inequality for the norm holds:
$$\Vei AB\Ved = \Vei \begin{bmatrix}ab & ab'+ a'b \\ 0 & ab \end{bmatrix}\Ved= \Vei ab \Ved + \Vei ab'+a'b\Ved $$
$$  \leq  \Vei a\Ved \Vei b \Ved + \Vei a\Ved \Vei b'\Ved+ \Vei a'\Ved \Vei b \Ved + \Vei a'\Ved \Vei b'\Ved =\Vei A\Ved\Vei B\Ved.$$
In addition, for every Cauchy sequence $\{A_n\}_{n\geq 1}\subset \widetilde{\mathcal{A}}$ with $A_n=\begin{bmatrix}a_n & a'_n \\ 0 & a_n \end{bmatrix}$, we have that
$$ \Vei A_n-A_m \Ved=\Vei \begin{bmatrix}a_n -a_m& a'_n-a'_m \\ 0 & a_n-a_m \end{bmatrix} \Ved = \Vei a_n-a_m \Ved+\Vei a'_n-a'_m \Ved.$$
Then we get that $\{a_n\}_{n\geq 1}$ and $\{a'_n\}_{n\geq 1}$ are Cauchy and thus convergent (since $\mathcal{A}$ is complete). This implies that $\{A_n\}_{n\geq 1}\subset \widetilde{\mathcal{A}}$ is convergent, and means that $( \widetilde{\mathcal{A}},\Vei \cdot \Ved )$ is complete. Therefore, we conclude that $( \widetilde{\mathcal{A}},\Vei \cdot \Ved )$ is a Banach algebra. 
\end{rem}

For the infinitesimal case we also need to record the infinitesimal variance. Let $(\mathcal{A},\mathcal{B},E,E')$ be an OVI probability space (where $\mathcal{A}$ and $\mathcal{B}$ are Banach algebras), and let $a\in\mathcal{A}$ be a centered and infinitesimally centered variable, namely $E(a)=E'(a)=0$. We record its variance and infinitesimal variance as two maps
$$ \eta,\eta':\mathcal{B} \to \mathcal{B}, \qquad \eta(b)=E(a_i ba_i), \qquad \eta'(b)=E'(a_i ba_i).$$
Then in complete analogy to Definition \ref{defn.OV.central.laws} we can describe the central limit laws. 
\begin{defn} Let $(\mathcal{A},\mathcal{B},E,E')$ be an OVI probability space, let $\eta,\eta':\mathcal{B} \to \mathcal{B}$ be the variance and infinitesimal variance of a variable $a$ as described above, and let $\nu'_{\eta,\eta'}:\mathcal{B}\langle X\rangle \to \mathcal{B}$ be a symmetric distribution, namely 
\[
\nu'_{\eta,\eta'}(b_0Xb_1Xb_2\cdots b_{k-1}Xb_k) = 0 \qquad  \text{ for all odd } k\in\mathbb{N},\ b_0,\dots,b_k\in \mathcal{B}.
\]
Then we say that
\begin{itemize}
    \item [(1)] $\nu'_{\eta,\eta'}$ is the centered \emph{$\mathcal{B}$-valued infinitesimal semi-circle law} with infinitesimal variance $(\eta,\eta')$ if
    \begin{equation*}
        \nu'_{\eta,\eta'}(b_0Xb_1Xb_2\cdots b_{k-1}Xb_k) = \sum\limits_{\pi\in\mathcal{NC}_2(k)}b_0\partial\eta_{\pi}(b_1,\cdots,b_{k-1})b_k,  \qquad  \text{ for all even } k\in\mathbb{N}.
    \end{equation*}
    \item [(2)] $\nu'_{\eta,\eta'}$ is the centered \emph{$\mathcal{B}$-valued infinitesimal Bernoulli law} with infinitesimal variance $(\eta,\eta')$ if
    \begin{equation*}
         \nu'_{\eta,\eta'}(b_0Xb_1Xb_2\cdots b_{k-1}Xb_k) =  \sum\limits_{j=0}^{k/2-1}b_0\eta(b_1)\cdots \eta'(b_{2j+1})\cdots \eta(b_{k-1})b_k,  \qquad  \text{ for all even } k\in\mathbb{N}.
    \end{equation*}
    \item [(3)] $\nu'_{\eta,\eta'}$ is the centered \emph{$\mathcal{B}$-valued infinitesimal arcsine law} with infinitesimal variance $(\eta,\eta')$ if
    \begin{equation*}
         \nu'_{\eta,\eta'}(b_0Xb_1Xb_2\cdots b_{k-1}Xb_k) = \sum\limits_{\pi\in\mathcal{NC}_2(k)}\frac{1}{\tau(\pi)!}b_0\partial\eta_{\pi}(b_1,\cdots,b_{k-1})b_k,  \qquad  \text{ for all even } k\in\mathbb{N}.
    \end{equation*}
\end{itemize}
\end{defn}

As one should expect, the previously defined laws arise as the central limit  in the OVI setting. 
\begin{thm}\label{OVICLT}
Suppose that $(\mathcal{A},\mathcal{B},E,E')$ is an OVI probability space (where $\AA$ and $\mathcal{B}$ are Banach algebras), and $\{a_i\}_{i=1}^\infty$ is a sequence of centered ($E(a_i)=0$), infinitesimally centered ($E'(a_i)=0$), identically distributed infinitesimally freely independent (resp, Boolean, monotone) elements. Consider
$$ \eta,\eta':\mathcal{B} \to \mathcal{B}, \qquad \eta(b)=E(a_i ba_i), \qquad \eta'(b)=E'(a_i ba_i), $$
the common variance and infinitesimal variance of the $a_i$. Then
for all $b_0,\cdots,b_k\in\mathcal{B}$ we have
\begin{eqnarray}
&E(b_0S_Nb_1S_Nb_2\cdots b_{k-1}S_Nb_k)&\longrightarrow \nu_{\eta}(b_0Xb_1Xb_2\cdots b_{k-1}Xb_k ) \text{ as } N\to \infty, \\
&E'(b_0S_Nb_1S_Nb_2\cdots b_{k-1}S_Nb_k)&\longrightarrow \nu'_{\eta,\eta'}(b_0Xb_1Xb_2\cdots b_{k-1}Xb_k ) \text{ as } N\to \infty,
\end{eqnarray}
where $\nu_{\eta}$ is $\mathcal{B}$-valued semi-circle law (resp, Bernoulli law, arcsine law) and  
$\nu_{\eta,\eta'}$ is $\mathcal{B}$-valued infinitesimal semi-circle law (resp, Bernoulli law, arcsine law), with variances $\eta,\eta'$.
\end{thm}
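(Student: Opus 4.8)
The argument will closely parallel the proofs of Theorems \ref{Thm.Boolean.add.conv} and \ref{Thm.monotone.add.conv}: transfer everything to the upper triangular Banach algebra $(\widetilde{\mathcal{A}},\widetilde{\mathcal{B}},\widetilde{E})$ and then invoke the operator-valued CLT (Theorem \ref{OVCLT}), which by Remark \ref{rem.CLT.Banach} is available over Banach algebras. For each $i$ set $A_i=\begin{bmatrix} a_i & 0\\ 0 & a_i\end{bmatrix}\in\widetilde{\mathcal{A}}$. First I would verify that the $A_i$ satisfy the hypotheses of Theorem \ref{OVCLT} over $\widetilde{\mathcal{B}}$: they are centered, since $\widetilde{E}(A_i)=\begin{bmatrix} E(a_i) & E'(a_i)\\ 0 & E(a_i)\end{bmatrix}=0$; they are Boolean (resp.\ monotone, free) independent over $\widetilde{\mathcal{B}}$ by Proposition \ref{p1} (resp.\ Proposition \ref{p2}, resp.\ the free case of \cite{tseng2019operator}); and they are identically distributed over $\widetilde{\mathcal{B}}$, which, after unpacking the block structure of a $\widetilde{E}$-moment $\widetilde{E}(A_i\widetilde{b}_1 A_i\cdots A_i)$ with $\widetilde{b}_j=\begin{bmatrix} b_j & c_j\\ 0 & b_j\end{bmatrix}$, amounts precisely to the $a_i$ being identically distributed with respect to both $E$ and $E'$. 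A short block computation then gives the $\widetilde{\mathcal{B}}$-valued variance of $A_i$: for $\widetilde{b}=\begin{bmatrix} b & c\\ 0 & b\end{bmatrix}$ one has $\widetilde{E}(A_i\widetilde{b}A_i)=\begin{bmatrix}\eta(b) & \eta'(b)+\eta(c)\\ 0 & \eta(b)\end{bmatrix}$, so this variance $\widetilde{\eta}$ is exactly the merge of $\eta$ and $\eta'$ in the sense of Notation \ref{nota.tilde.cumulant}.

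Next I would apply Theorem \ref{OVCLT} in $(\widetilde{\mathcal{A}},\widetilde{\mathcal{B}},\widetilde{E})$. The normalized sums $\widetilde{S}_N=\tfrac{1}{\sqrt N}(A_1+\cdots+A_N)=\begin{bmatrix} S_N & 0\\ 0 & S_N\end{bmatrix}$ converge in $\widetilde{\mathcal{B}}$-distribution to an element carrying the $\widetilde{\mathcal{B}}$-valued semicircle (resp.\ Bernoulli, arcsine) law of variance $\widetilde{\eta}$, whose moments are the ones listed in Definition \ref{defn.OV.central.laws}. Testing against diagonal elements $\widetilde{b}_j=\begin{bmatrix} b_j & 0\\ 0 & b_j\end{bmatrix}$ and using that products of $\widetilde{S}_N$ with diagonal matrices remain block-scalar, one gets $\widetilde{E}(\widetilde{b}_0\widetilde{S}_N\widetilde{b}_1\cdots\widetilde{S}_N\widetilde{b}_k)=\begin{bmatrix} m_N & m'_N\\ 0 & m_N\end{bmatrix}$ with $m_N=E(b_0S_Nb_1\cdots S_Nb_k)$ and $m'_N=E'(b_0S_Nb_1\cdots S_Nb_k)$. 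Hence the $(1,1)$-entry of the limit recovers the (known) convergence $m_N\to\nu_\eta(b_0Xb_1\cdots Xb_k)$, while the $(1,2)$-entry yields convergence of $m'_N$ to the $(1,2)$-entry of the $\widetilde{\mathcal{B}}$-valued central law evaluated at these diagonal arguments.

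It remains to identify that $(1,2)$-entry with $\nu'_{\eta,\eta'}$, and this is where I expect the real work to be. By the moment formulas of Definition \ref{defn.OV.central.laws}, everything comes down to the nested-variance maps $\widetilde{\eta}_\pi$ for $\pi\in\mathcal{NC}_2(k)$, and the crucial point is a \emph{variance-level} version of Lemma \ref{lemma10.Tseng}: an easy induction on $|\pi|$ (copying the proof of that lemma) shows that for diagonal arguments $\widetilde{\eta}_\pi(\widetilde b_1,\dots,\widetilde b_{k-1})$ has diagonal entries $\eta_\pi(b_1,\dots,b_{k-1})$ and $(1,2)$-entry $\partial\eta_\pi(b_1,\dots,b_{k-1})$, where $\partial\eta_\pi$ is built from $E'$ exactly as $\eta_\pi$ is built from $E$, i.e.\ the sum over blocks $V\in\pi$ of the nested-variance map with the copy of $\eta$ at $V$ replaced by $\eta'$ as in Notation \ref{notation.partial}. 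Substituting this into the semicircle and arcsine formulas immediately produces the $\mathcal{B}$-valued infinitesimal semicircle and arcsine laws; in the Bernoulli case $\pi$ is forced to be the interval pairing, $\tau(\pi)!=1$, and $\partial\eta_\pi$ collapses to $\sum_{j}\eta(b_1)\cdots\eta'(b_{2j+1})\cdots\eta(b_{k-1})$, which is exactly the infinitesimal Bernoulli law. I anticipate the main obstacle to be precisely this last identification — establishing the variance-level analogue of Lemma \ref{lemma10.Tseng} and then matching the three pairs of moment formulas — whereas the transfer to $\widetilde{\mathcal{A}}$ and the entrywise extraction are routine once Propositions \ref{p1} and \ref{p2} are in hand.
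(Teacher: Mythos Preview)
Your proposal is correct and follows essentially the same route as the paper: lift to $(\widetilde{\mathcal{A}},\widetilde{\mathcal{B}},\widetilde{E})$ with $A_i=\begin{bmatrix}a_i&0\\0&a_i\end{bmatrix}$, apply the operator-valued CLT there, and read off the $(1,1)$- and $(1,2)$-entries after testing against diagonal $\widetilde{b}_j$. The only cosmetic difference is that the paper identifies the $(1,2)$-entry of the limit by writing $\widetilde{\eta}_\pi(B_1,\dots,B_{k-1})=\widetilde{E}_\pi(A_1B_1,\dots,A_1B_{k-1},A_1)$ and invoking Lemma \ref{lemma10.Tseng} directly for $f=E$, whereas you phrase it as a separate ``variance-level'' induction; these are the same computation.
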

\begin{proof}
We will just prove the free case, as the procedure for Boolean and monotone is very similar. Consider the upper triangular probability space $(\widetilde{\mathcal{A}},\widetilde{\mathcal{B}},\widetilde{E})$ and for each $j$ we let 
$A_j=\begin{bmatrix}
a_j & 0 \\
0 & a_j
\end{bmatrix}$. Note that $\{A_j\}_{j=1}^{\infty}$ is freely independent with    
$\widetilde{E}(A_j)=0$. 
In addition, 
$$
\widetilde{S}_N:=\frac{1}{\sqrt{N}}(A_1+\cdots+A_N)=
\begin{bmatrix}
S_N & 0 \\
0 & S_N 
\end{bmatrix}.$$
Moreover, for a fixed $k\in\mathbb{N}$ if we let $b_0,b_1,\cdots, b_k\in\mathcal{B}$ and denote $B_j:=\begin{bmatrix} 
b_j & 0 \\ 0 & b_j \end{bmatrix}$ for each $j$, then
\begin{eqnarray*}
\widetilde{E}(B_0\widetilde{S}_NB_1\cdots \widetilde{S}_NB_k)
&=& \widetilde{E}\Big( \begin{bmatrix}
b_0S_Nb_1\cdots S_Nb_k & 0 \\
0 & b_0S_Nb_1\cdots S_Nb_k
\end{bmatrix} \Big)  \\
&=& 
\begin{bmatrix}
E(b_0S_Nb_1\cdots S_Nb_k) & E'(b_0S_Nb_1\cdots S_Nb_k)  \\
0 & E(b_0S_Nb_1\cdots S_Nb_k)
\end{bmatrix}.
\end{eqnarray*}

By Theorem \ref{OVCLT}, $\widetilde{E}(B_0\widetilde{S}_NB_1\cdots \widetilde{S}_NB_k) \longrightarrow \widetilde{\nu}_{\eta}(B_0XB_1X\cdots XB_k)$ as $N\to \infty$ where
\[
\widetilde{\nu}_{\eta}(B_0XB_1X\cdots XB_k) = \begin{cases} 0 & \text{if }k \text{ is odd,} \\ 
                                                        \sum\limits_{\pi\in\mathcal{NC}_2(k)}B_0\widetilde{\eta}_{\pi}(B_1,\cdots,B_{k-1})B_k  & \text{if }k \text{ is even.}\end{cases}
\]
Note that if $k$ is even, then
\begin{eqnarray*}
\sum\limits_{\pi\in\mathcal{NC}_2(k)}B_0\widetilde{\eta}_{\pi}(B_1,\cdots,B_{k-1})B_k &=& \sum\limits_{\pi\in\mathcal{NC}_2(k)}B_0\widetilde{E}_{\pi}(A_1B_1,A_1B_2,\dots,A_1B_{k-1},A_1)B_k \\
&=& \sum\limits_{\pi\in\mathcal{NC}_2(k)}
\begin{bmatrix}
E_{\pi} (a_1b_1,\dots,a_1b_{k-1},a_1) & \partial E_{\pi} (a_1b_1,\dots,a_1b_{k-1},a_1) \\
0 & E_{\pi} (a_1b_1,\dots,a_1b_{k-1},a_1)
\end{bmatrix} \\
&=& \begin{bmatrix}
\sum\limits_{\pi\in\mathcal{NC}_2(k)} b_0\eta_{\pi}(b_1,\dots,b_{k-1})b_k & \sum\limits_{\pi\in\mathcal{NC}_2(k)}  b_0\partial\eta_{\pi}(b_1,\dots,b_{k-1})b_k \\
0 & \sum\limits_{\pi\in\mathcal{NC}_2(k)} b_0\eta_{\pi}(b_1,\dots,b_{k-1})b_k
\end{bmatrix}.
\end{eqnarray*}
Thus, we conclude that 
\begin{eqnarray*}
&E(b_0S_Nb_1S_Nb_2\cdots b_{k-1}S_Nb_k)&\longrightarrow \nu_{\eta}(b_0Xb_1Xb_2\cdots b_{k-1}Xb_k ) \text{ as } N\to \infty \ ; \\
&E'(b_0S_Nb_1S_Nb_2\cdots b_{k-1}S_Nb_k)&\longrightarrow \nu'_{\eta,\eta'}(b_0Xb_1Xb_2\cdots b_{k-1}Xb_k ) \text{ as } N\to \infty \ .
\end{eqnarray*}

\end{proof}

\subsection{Scalar-Valued Infinitesimal CLT}
\label{ssec:SVICLT}

$\ $

\noindent Note that we can view the scalar-valued infinitesimal central limit theorem as a special case of operator-valued infinitesimal central limit theorem. In this subsection, we provide another approach to construct the scalar-valued infinitesimal central limit theorem.    

\begin{prop}\label{prop.scalar.CLT}
Suppose that $(\mathcal{A},\varphi,\varphi')$ is an infinitesimal non-commutative probability space, and $a,a_1,a_2,\dots $ is a sequence of identically distributed elements in $\mathcal{A}$ which are infinitesimally independent. We also assume that $\varphi(a)=\varphi'(a)=0, \varphi(a^2)=1$, and $\varphi'(a^2)=\alpha$. Let $G$ be the Cauchy transform and $g$ be the infinitesimal Cauchy transform of the limit law of
$S_N=(a_1+\cdots +a_N)/\sqrt{N}$ with respect to $(\varphi,\varphi')$. Then 
$$
g(z) = \frac{-\alpha}{2}\frac{d}{dz}\Big( zG(z)\Big).
$$
\end{prop}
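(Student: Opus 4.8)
The plan is to identify the limit law of $S_N$ via the infinitesimal Central Limit Theorem and then read off the asserted identity from the explicit form of its moments. First I would apply Theorem~\ref{OVICLT} (or, equivalently, Theorem~\ref{OVCLT} in the associated upper triangular space) in the scalar case $\mathcal{B}=\mathbb{C}$. Here the variance and infinitesimal variance of $a$ are the linear maps $\eta,\eta':\mathbb{C}\to\mathbb{C}$ given by $\eta(z)=\varphi(aza)=\varphi(a^2)z=z$ and $\eta'(z)=\varphi'(aza)=\varphi'(a^2)z=\alpha z$, i.e.\ $\eta=\mathrm{id}$ and $\eta'=\alpha\,\mathrm{id}$. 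Consequently $S_N$ converges in distribution, with respect to $(\varphi,\varphi')$, to an element $c$ that is standard semicircular for $\varphi$ and infinitesimally semicircular with infinitesimal variance $(\eta,\eta')$ for $\varphi'$: all odd moments vanish, $\varphi(c^{2m})=|\mathcal{NC}_2(2m)|=:C_m$ (the $m$-th Catalan number), and, since replacing the identity map $\eta$ by $\eta'=\alpha\,\mathrm{id}$ in any one of the $m$ blocks of a pairing $\pi\in\mathcal{NC}_2(2m)$ contributes exactly a factor $\alpha$, we get $\partial\eta_\pi=m\alpha$ and hence $\varphi'(c^{2m})=m\alpha\,C_m$.

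The remaining step is a one-line manipulation of generating functions. Writing the Cauchy transform and the infinitesimal Cauchy transform of $c$ as the series $G(z)=\sum_{m\ge0}\varphi(c^{2m})z^{-2m-1}=\sum_{m\ge0}C_m z^{-2m-1}$ and $g(z)=\sum_{m\ge0}\varphi'(c^{2m})z^{-2m-1}=\alpha\sum_{m\ge0}m\,C_m z^{-2m-1}$ (convergent for $|z|$ large, since $C_m$ grows only exponentially), one computes
\[
\frac{d}{dz}\bigl(zG(z)\bigr)=\frac{d}{dz}\sum_{m\ge0}C_m z^{-2m}=-\sum_{m\ge0}2m\,C_m z^{-2m-1}=-\frac{2}{\alpha}\,g(z),
\]
which is precisely $g(z)=-\tfrac{\alpha}{2}\tfrac{d}{dz}\bigl(zG(z)\bigr)$.

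I expect the genuine content — and the only place that needs any care — to be the first step: correctly specializing the infinitesimal CLT and extracting the closed form $\varphi'(c^{2m})=m\alpha\,C_m$ for the infinitesimal moments; after that the verification is immediate. An alternative, more conceptual route to the same formula (which also explains the appearance of $\tfrac{d}{dz}(zG)$) is the scaling argument: the semicircle law of variance $v$ has Cauchy transform $G_v(z)=v^{-1/2}G(v^{-1/2}z)$, and by the algebraic free CLT $S_N$ has limiting $\varphi+t\varphi'$-moments equal, up to order $t$, to those of a semicircular element of variance $1+\alpha t$; differentiating $G_{1+\alpha t}(z)$ at $t=0$ via the chain rule gives $\tfrac{d}{dt}\big|_{0}v^{-1/2}G(v^{-1/2}z)=-\tfrac{\alpha}{2}\tfrac{d}{dz}\bigl(zG(z)\bigr)$. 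The subtle point in that version is the first-order-in-$t$ validity of freeness with respect to $\varphi+t\varphi'$, which is exactly the content of infinitesimal freeness and suffices since higher-order terms do not affect the derivative at $t=0$.
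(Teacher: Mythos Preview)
Your argument is correct for the free case, and the generating-function step is exactly the paper's. However, the proposition is stated for all three notions of infinitesimal independence (free, Boolean, monotone) simultaneously --- this is how it is used immediately afterwards in Theorem~\ref{SVICLT} --- and your write-up only treats the free case: you invoke the semicircular limit, Catalan numbers, and $\mathcal{NC}_2(2m)$, none of which apply to the Boolean or monotone central limits. The alternative ``scaling'' route you sketch at the end is likewise specific to the free case.

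The paper's proof handles all three cases by working directly with cumulants rather than invoking the CLT as a black box. It notes that for each type of cumulant $\kappa\in\{r,\beta,h\}$ one has $\kappa_n(S_N,\dots,S_N)=N^{1-n/2}\kappa_n(a,\dots,a)\to 0$ for $n>2$, so the limit has $\kappa_2=1$, $\kappa'_2=\alpha$ and all other (infinitesimal) cumulants zero. Then for each type of independence the corresponding moment-cumulant formula collapses to a sum over pair partitions in the appropriate set ($NC_2$, $I_2$, or $NC_2$ weighted by $1/\tau(\pi)!$), and replacing one block's $\kappa_2$ by $\kappa'_2$ gives uniformly $m'_n=\tfrac{n}{2}\alpha\,m_n$. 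This universal relation is the real point; once you have it, your generating-function computation goes through verbatim with $C_m$ replaced by the appropriate moment sequence. Your approach can be salvaged by simply observing that in all three central-limit laws of Definition~\ref{defn.OV.central.laws} (specialized to scalars) the infinitesimal moment is obtained from the ordinary one by the same ``pick one block and multiply by $\alpha$'' mechanism, yielding $m'_{2m}=m\alpha\,m_{2m}$ in each case.
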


\begin{proof}
Let $(\kappa_n)_{n\geq 1}$ represent the free $(r_n)_{n\geq 1}$, Boolean $(\beta_n)_{n\geq 1}$ or monotone $(h_n)_{n\geq 1}$ cumulants. Then 
\begin{eqnarray*}
\kappa_n(S_N,\cdots,S_N) &=&  (\frac{1}{\sqrt{N}})^n \kappa_n(a_1+\cdots+a_N,\cdots, a_1+\cdots+a_N) \\
                    &=&  N^{1-n/2} \kappa_n(a_1,\cdots,a_1).
\end{eqnarray*}
Then, for $n\geq 2$ we have that $\kappa_n(S_N,\cdots,S_N)\longrightarrow 0$ as $N\rightarrow \infty$. On the other hand, \[ \kappa_1(a)=\varphi(a)=0 \text{ and } \kappa_2(a,a)=\varphi(a)-\varphi(a)^2=1. \]  
Similarly, $\kappa'_1(a)=0$ and $\kappa'_2(a,a)=\alpha$. 
To compute the moments, we separate in cases: 
\begin{itemize}
    \item For the free case, one has
$$ 
m'_n = \sum\limits_{\pi\in NC(n)} \partial r_{\pi} = \sum\limits_{\pi \in NC_2(n)} \frac{n}{2} r_2^{n/2-1}r'_2 = \frac{n}{2}\alpha |NC_2(n)|=\frac{n}{2}\alpha m_n. 
$$
    \item For the Boolean case, we have
$$ 
m'_n = \sum\limits_{\pi\in I(n)} \partial \beta_{\pi} = \sum\limits_{\pi \in I_2(n)} \frac{n}{2} \beta_2^{n/2-1}\beta'_2 = \frac{n}{2}\alpha |I_2(n)|=\frac{n}{2}\alpha m_n. 
$$
    \item For the monotone case, we obtain
$$ 
m'_n = \sum\limits_{\pi \in NC(n)} \frac{1}{\tau(\pi)!}\partial h_{\pi} = \sum\limits_{\pi \in NC_2(n)}  \frac{1}{\tau(\pi)!}\frac{n}{2} h_2^{n/2-1}h'_2 = \frac{n}{2}\alpha  \sum\limits_{\pi \in NC_2(n)}  \frac{1}{\tau(\pi)!} =\frac{n}{2}\alpha m_n. 
$$
\end{itemize}
Hence, note that for each case of independence, we have for $n\in \mathbb{N}$,  
$$ m'_n=\frac{n}{2}\alpha m_n.$$ 
Notice that the values are zero if $n$ is odd. We conclude that 
\begin{eqnarray}
g(z) &=& \sum\limits_{n=1}^\infty \frac{m'_n}{z^{n+1}} = \sum\limits_{n=1}^\infty \frac{\alpha n}{2}\frac{m_n}{z^{n+1}} = \frac{-\alpha}{2}\sum\limits_{n=1}^\infty {(-n)}\frac{m_n}{z^{n+1}} = \frac{-\alpha}{2}\frac{d}{dz}\Big( zG(z)\Big). 
\end{eqnarray}
\end{proof}


Let $\mathcal{E}(\mathbb{R})$ be the space of all complex-valued smooth functions on $\mathbb{R}$, and then the space of all continuous linear functionals on $\mathcal{E}(\mathbb{R})$ is denoted by $\mathcal{E}'(\mathbb{R})$. It is known that the elements of $\mathcal{E}'(\mathbb{R})$ are distributions with compact support, for a detailed discussion on this we refer to \cite{CM89}.   
Note that if we consider $f:[a,b]\to \mathbb{R}$ is a function of bounded variation, and then we let $f=0$ on the complement of $[a,b]$. 
For such $f$, we define $\nu:\mathcal{E}(\mathbb{R})\to \mathbb{C}$ by
$$
\nu(\phi)=\int_{\mathbb{R}} \phi(t) f''(t)dt
$$
where the second derivative of $f$ is taken in the distributional sense. It is easy to see that $\nu$ is a distribution and $supp(\nu)\subset [a,b]$, so $\nu\in\mathcal{E}'(\mathbb{R})$. We denote it by $\nu=f''$, this set of distributions have been studied in \cite{BS} under the name $\mathcal{M}_2$.

Given $\nu=f''$ for some function $f$ of bounded variation on a closed interval, we define the infinitesimal Cauchy transform $g_{\nu}$ of the distribution $\nu$ by 
$$g_{\nu}(z) =\nu\left(\frac{1}{z-t}\right), \qquad \forall Im(z)\neq 0.$$
Then, we also see that 
\begin{equation*}
g_{\nu}(z) = \int \frac{1}{z-t}f''(t)dt = -\int \frac{d}{dt}\Big(\frac{1}{z-t}\Big)f'(t)dt  \\
           = -\int \frac{-1}{(z-t)^2}f'(t)dt = \frac{d}{dz} \Bigg( \int \frac{1}{z-t}f'(t)dt\Bigg).
\end{equation*}
Now, let us apply the Proposition \ref{prop.scalar.CLT} to deduce the infinitesimal central limit theorem. The free case was already proved by Popa \cite{POP10}, another proof can be found in \cite{BS}.

\begin{thm}\label{SVICLT}
Suppose that $(\mathcal{A},\varphi,\varphi')$ is an infinitesimally non-commutative probability space. Let $a_1,a_2,\dots $ be a sequence in $\mathcal{A}$ with identical distribution, which are infinitesimally independent (free, Boolean, monotone). We also assume that $\varphi(a_1)=\varphi'(a_1)=0, \varphi(a_1^2)=1$, and $\varphi'(a_1^2)=\alpha.$ For each $N\in \mathbb{N}$, we let $S_N=(a_1+\cdots +a_N)/\sqrt{N}$, then for every $k\in\mathbb{N}$, we have
\begin{equation*}
\lim\limits_{N\rightarrow \infty}\varphi'(S_N^k) = \int t^k d\mu'(t),
\end{equation*}
where $\mu'$ is stated as follows.  
\begin{itemize}
    \item Free case: $\mu'$ is a signed measure such that $\mu'=\alpha (\mu_1-\mu_2)$ with 
$$
d\mu_1(t)=\frac{1}{2\pi}\frac{t^2}{\sqrt{4-t^2}}1_{(-2,2)}(t)dt \ \text{and} \ \ d\mu_2(t)=\frac{1}{\pi\sqrt{4-t^2}}1_{(-2,2)}(t)dt.
$$
    \item Boolean case: $\mu'=f''$ with 
$$df(t) = \frac{\alpha}{2}(\delta_{-1}-\delta_{1})(t).$$
    \item Monotone case: $\mu'=f''$ with 
$$df(t)=\frac{-\alpha}{\pi}\frac{t}{2\sqrt{2-t^2}}1_{(-\sqrt{2},\sqrt{2})}(t)dt.$$ 
\end{itemize}

\end{thm}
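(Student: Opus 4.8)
The plan is to combine Proposition~\ref{prop.scalar.CLT} with the classical (non-infinitesimal) central limit theorems for free, Boolean and monotone independence, and then to identify each infinitesimal Cauchy transform $g$ with the transform of an explicit compactly supported distribution $\mu'$ via the formula $g(z)=\frac{-\alpha}{2}\frac{d}{dz}(zG(z))$. First I would recall, for each notion of independence, the limit law $\mu$ of $S_N=(a_1+\cdots+a_N)/\sqrt N$ with $\varphi(a_1)=0$, $\varphi(a_1^2)=1$: in the free case $\mu$ is the standard semicircle law on $(-2,2)$ with $G(z)=\frac{z-\sqrt{z^2-4}}{2}$; in the Boolean case $\mu=\frac12(\delta_{-1}+\delta_1)$ with $G(z)=\frac{z}{z^2-1}$; in the monotone case $\mu$ is the arcsine law on $(-\sqrt2,\sqrt2)$ with $G(z)=\frac{1}{\sqrt{z^2-2}}$. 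Since $\varphi'(S_N^k)\to\int t^k\,d\mu'(t)$ is exactly the assertion that $g$ is the infinitesimal Cauchy transform of $\mu'$, it suffices to compute $\frac{-\alpha}{2}\frac{d}{dz}(zG(z))$ in each case and recognize the result as $g_{\mu'}$ for the claimed $\mu'$.

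For the Boolean case I would compute $zG(z)=\frac{z^2}{z^2-1}=1+\frac{1}{z^2-1}$, so $\frac{d}{dz}(zG(z))=\frac{-2z}{(z^2-1)^2}$ and hence $g(z)=\frac{\alpha z}{(z^2-1)^2}$. On the other hand, taking $f=\frac{\alpha}{2}(H_{-1}-H_1)$ where $H_c$ is the Heaviside step at $c$ (so $df=\frac{\alpha}{2}(\delta_{-1}-\delta_1)$), the discussion preceding the theorem gives $g_{\mu'}(z)=\frac{d}{dz}\int\frac{1}{z-t}\,df(t)=\frac{d}{dz}\big(\frac{\alpha}{2}(\frac{1}{z+1}-\frac{1}{z-1})\big)$; differentiating yields $\frac{\alpha}{2}\big(\frac{-1}{(z+1)^2}+\frac{1}{(z-1)^2}\big)=\frac{\alpha}{2}\cdot\frac{(z+1)^2-(z-1)^2}{(z^2-1)^2}=\frac{\alpha z}{(z^2-1)^2}$, matching $g$. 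For the monotone case I would use $zG(z)=\frac{z}{\sqrt{z^2-2}}$, so $\frac{d}{dz}(zG(z))=\frac{\sqrt{z^2-2}-z\cdot\frac{z}{\sqrt{z^2-2}}}{z^2-2}=\frac{-2}{(z^2-2)^{3/2}}$, giving $g(z)=\frac{\alpha}{(z^2-2)^{3/2}}$; I would then check that this equals $\frac{d}{dz}\int\frac{1}{z-t}f'(t)\,dt$ with $f'(t)=\frac{-\alpha}{\pi}\frac{t}{2\sqrt{2-t^2}}1_{(-\sqrt2,\sqrt2)}(t)$, i.e.\ that $\int\frac{1}{z-t}f'(t)\,dt=\frac{-\alpha}{\sqrt{z^2-2}}$ up to the appropriate branch, which follows from the standard Stieltjes transform of the (derivative of the) arcsine density. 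For the free case I would write $zG(z)=\frac{z^2-z\sqrt{z^2-4}}{2}$, differentiate to get $g(z)=\frac{-\alpha}{2}\cdot\frac{d}{dz}\big(\frac{z^2-z\sqrt{z^2-4}}{2}\big)$, and then verify that the resulting rational-times-$(z^2-4)^{-1/2}$ expression coincides with $\alpha(G_{\mu_1}(z)-G_{\mu_2}(z))$ where $\mu_1,\mu_2$ are the two densities in the statement; equivalently one checks $\int t^k\,d\mu'=\frac{k}{2}\alpha\int t^k\,d\mu_{sc}$ directly against the known even moments (Catalan numbers) as in the internal computation of Proposition~\ref{prop.scalar.CLT}, which already produced $m'_k=\frac{k}{2}\alpha m_k$.

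The routine part is the calculus: differentiating $zG(z)$ and matching against the Cauchy/Stieltjes transforms of the candidate measures. The main obstacle, and the only genuinely delicate point, is the analytic bookkeeping for the distributions $\mu'$ that are not measures but second distributional derivatives $f''$ of bounded-variation functions — one must justify that ``$\varphi'(S_N^k)\to\int t^k\,d\mu'(t)$'' is the correct reading of ``$g$ is the infinitesimal Cauchy transform of $\mu'=f''$'', i.e.\ that the moments $\int t^k f''(t)\,dt$ (interpreted distributionally, $=k(k-1)\int t^{k-2}f(t)\,dt$ after integration by parts, with the boundary terms handled by the compact support of $f$) reproduce the coefficients $m'_k=\frac{k}{2}\alpha m_k$ in the asymptotic expansion $g(z)=\sum_{k\ge1}m'_k z^{-k-1}$. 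This is exactly the $\mathcal M_2$ framework of \cite{BS}, and once the Boolean $f=\frac{\alpha}{2}(H_{-1}-H_1)$ and monotone $f$ as above are fixed, the identity of transforms established in the previous paragraph upgrades automatically to the stated moment convergence. Finally I would remark that the free limit law is a genuine signed measure (being absolutely continuous) whereas the Boolean and monotone limits require the $f''$ formulation, explaining the asymmetry in how the three cases are phrased.
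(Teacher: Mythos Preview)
Your approach is essentially identical to the paper's: both invoke Proposition~\ref{prop.scalar.CLT} to get $g(z)=\frac{-\alpha}{2}\frac{d}{dz}(zG(z))$, plug in the known Cauchy transform $G$ of the semicircle, Bernoulli, and arcsine laws, and then check that the resulting $g$ agrees with the (infinitesimal) Cauchy transform of the stated $\mu'$. One minor slip in your sketch: in the monotone case the Stieltjes integral $\int\frac{1}{z-t}f'(t)\,dt$ equals $\frac{-\alpha z}{2\sqrt{z^2-2}}+\frac{\alpha}{2}$ rather than $\frac{-\alpha}{\sqrt{z^2-2}}$, but its $z$-derivative is still $\frac{\alpha}{(z^2-2)^{3/2}}=g(z)$, so the argument goes through.
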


\begin{proof}

{\bf Free case:} 
Recall that the law of free central limit theorem is the standard semi-circle law, and then the corresponding Cauchy transform is
$$
G(z)=\int_{-2}^2 \frac{1}{z-t}\frac{1}{2\pi}\sqrt{4-t^2}dt = \frac{z-\sqrt{z^2-4}}{2}.
$$
Hence by Proposition \ref{prop.scalar.CLT}, we have
$$
g(z)= \frac{-\alpha}{2}\frac{d}{dz}\Big( zG(z)\Big) = \frac{\alpha}{2} \Big(\frac{z^2-2}{\sqrt{z^2-4}}-z\Big).
$$
Note that if we consider the following two probability measures 
$$
d\mu_1(t)=\frac{1}{2\pi}\frac{t^2}{\sqrt{4-t^2}}1_{(-2,2)}(t)dx \quad \text{and} \quad d\mu_2(t)=\frac{1}{\pi\sqrt{4-t^2}}1_{(-2,2)}(t)dt,
$$
Then  
\begin{eqnarray*}
\int_{-2}^2 \frac{1}{z-t}d(\mu_1-\mu_2)(t) &=& \int_{-2}^2 \frac{1}{(z-t)} \Big(\frac{t^2}{2}-1 \Big)\frac{1}{\pi \sqrt{4-t^2}}dt \\
                                           &=& \int_{-2}^2 \frac{t^2}{2} \frac{1}{(z-t)} \frac{1}{\pi \sqrt{4-t^2}}dt - \int_{-2}^2 \frac{1}{(z-t)} \frac{1}{\pi \sqrt{4-t^2}}dt \\
                                           &=& \Big( \frac{z^2}{\sqrt{z^2-4}}-z \Big)-\Big( \frac{1}{\sqrt{z^2-4}}  \Big) \\
                                           &=& \frac{1}{2} \Big(\frac{z^2-2}{\sqrt{z^2-4}}-z\Big).
\end{eqnarray*}
Thus, if we let $\mu'=\alpha ( \mu_1-\mu_2)$, then $\mu'$ is a signed measure such that 
$$
g(z) =\int \frac{1}{z-t}d\mu'(t).
$$

{\bf Boolean case:} 

The law of Boolean central limit theorem is $\frac{1}{2}\delta_1+\frac{1}{2}\delta_{-1}$, which implies that
$$
G(z)= \int \frac{1}{z-t}d(\frac{1}{2}\delta_1+\frac{1}{2}\delta_{-1})(t) = \frac{1}{z-1}+\frac{1}{z+1}.
$$
Hence, we apply the Proposition \ref{prop.scalar.CLT} that we get
$$
g(z) = \frac{-\alpha}{2}\frac{d}{dz}\Big(zG(z) \Big) = \frac{2\alpha z}{(z^2-1)^2}.
$$
Note that 
\[
\int \frac{1}{z-t}d(\frac{\alpha}{2}\delta_{-1}-\frac{\alpha}{2}\delta_{1})(t)
=\frac{\alpha}{2} \Big( \frac{1}{z-(-1)}-\frac{1}{z-1} \Big) 
= \frac{\alpha}{2} \Big( \frac{1}{1-z}+\frac{1}{1+z} \Big) 
=\frac{2\alpha}{2(1-z^2)}
=\frac{2\alpha}{2-2z^2}, 
\]
and also
$$
\frac{d}{dz}\Big( \frac{2\alpha}{2-2z^2} \Big) = \frac{2\alpha z}{(z^2-1)^2}.
$$
Thus, we obtain
$$
\frac{d}{dz}\Bigg( \int \frac{1}{z-t}d(\frac{\alpha}{2}\delta_{-1}-\frac{\alpha}{2}\delta_{1})(t) \Bigg) = g(z).
$$


{\bf Monotone case:} 

It is known that the Cauchy transform of the monotone central limit theorem is 
$$
G(z) = \int_{-\sqrt{2}}^{\sqrt{2}} \frac{1}{z-t}\frac{1}{\pi \sqrt{2-t^2}}dt = \frac{1}{\sqrt{z^2-2}}.
$$
Applying the Proposition \ref{prop.scalar.CLT}, we obtain
$$
g(z) = \frac{-\alpha}{2}\frac{d}{dz}\Big(zG(z) \Big) = \frac{\alpha}{(z^2-2)^{3/2}}.
$$
Observe that 
\[
\int_{-\sqrt{2}}^{\sqrt{2}} \frac{1}{z-t}\frac{-\alpha}{\pi}\frac{t}{2\sqrt{2-t^2}}dt  =\frac{-\alpha}{\pi}\Bigg(\frac{\pi}{2}\frac{z}{\sqrt{z^2-2}}-\frac{\pi}{2} \Bigg)
= \frac{-\alpha}{2}\frac{z}{\sqrt{z^2-2}}+\frac{\alpha}{2},
\]
and also
$$
\frac{d}{dz}\Big( \frac{-\alpha z}{2\sqrt{z^2-2}}+\frac{\alpha}{2}\Big) = \frac{\alpha}{(z^2-2)^{3/2}}.
$$
Hence, we have
$$
\frac{d}{dz}\Bigg( \int_{-\sqrt{2}}^{\sqrt{2}} \frac{1}{z-t}\frac{-\alpha}{\pi}\frac{t}{2\sqrt{2-t^2}}dt \Bigg) = g(z).
$$

\end{proof}

\section{Relation among OVI cumulants}
\label{sec:Relation.cumulants}
In this section we discuss relations between non-commutative cumulants in the OVI setting. These relations follow directly from the use of the upper triangular OV space, and the relations at the OV level. 

\begin{defn}
The \textit{min-max order} $\ll$ in the lattice $NC(n)$ is defined as follows. For $\pi,\sigma\in \mathcal{NC}(n)$, we write `$\pi \ll \sigma$' to mean that $\pi \leq \sigma$ and that for every block $V$ of $\sigma$ there exists a block $W$ of $\pi$ such that $\min(V),\max(V) \in W$. 

We will say that a partition $\pi$ is \textit{irreducible} if $\pi\ll 1_n$. This is equivalent to the fact that $1$ and $n$ are in the same block of $\pi$. The set of non-crossing irreducible partitions is denoted by $\mathcal{NC}_{irr}(n)$.
\end{defn}

We begin by recalling that the formulas relating Boolean and free cumulants on the OV setting have already appeared in connection to the study of the Boolean Bercovici-Pata bijection \cite{ABFN}. 

\begin{prop}[Proposition 5.4 of \cite{ABFN}]
\label{Prop.relation.boofree.cumulants}
Suppose that $(\mathcal{A},\mathcal{B},E,)$ is an OV probability space, and let $\{ r^{\mathcal{B}}_n\}_{n \ge 1}$ and $\{ \beta^{\mathcal{B}}_n\}_{n \ge 1}$, be the families of OVI free and Boolean cumulants, respectively. Then, the following relations among them hold:
\begin{eqnarray}
	\beta^{\mathcal{B}}_n (a_1, \dots, a_n) 
		&=& \sum_{\pi\in \mathcal{NC}_{irr}(n)} r^{\mathcal{B}}_\pi(a_1, \dots, a_n), \label{OVBooFree}\\
		r^{\mathcal{B}}_n (a_1, \dots, a_n) 
		&=& \sum_{\pi\in \mathcal{NC}_{irr}(n)} (-1)^{|\pi|-1} \beta^{\mathcal{B}}_\pi(a_1, \dots, a_n), \label{OVFreeBoo}
\end{eqnarray}
for every $n$ and elements $a_1,\dots, a_n\in \mathcal{A}$.
\end{prop}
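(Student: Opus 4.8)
The plan is to derive both identities from the moment--cumulant formulas \eqref{mom-free} and \eqref{mom-boo} by means of the standard decomposition of a non-crossing partition into an interval skeleton carrying irreducible pieces; the operator-valued setting changes nothing essential, it only forces one to keep track of the order of the $\mathcal B$-valued factors. Write $a_I$ for the sub-tuple $(a_i)_{i\in I}$. Given $\pi\in NC(n)$, let $V_1$ be the block through $1$ and $k_1=\max V_1$; then $[1,k_1]$ is a union of blocks of $\pi$ and $\pi|_{[1,k_1]}$ is irreducible, so iterating on $[k_1+1,n]$ produces a unique interval partition $\sigma=\{I_1,\dots,I_m\}\in I(n)$, with $I_j$ of length $n_j$, such that no block of $\pi$ meets two of the $I_j$ and $\pi|_{I_j}\in\mathcal{NC}_{irr}(I_j)$ for every $j$; conversely each such datum reassembles uniquely, so $NC(n)\cong\bigsqcup_{\sigma\in I(n)}\prod_{j=1}^m\mathcal{NC}_{irr}(n_j)$. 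The one operator-valued point is the factorization $r^{\mathcal B}_\pi(a_1,\dots,a_n)=r^{\mathcal B}_{\pi|_{I_1}}(a_{I_1})\cdots r^{\mathcal B}_{\pi|_{I_m}}(a_{I_m})$, the product being taken from left to right in $\mathcal B$; this follows from Definition \ref{partitions.multiplicative} by collapsing first the interval blocks of $\pi|_{I_1}$, then those of $\pi|_{I_2}$, and so on, using that the maps in question are $\mathcal B$-multilinear.

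For \eqref{OVBooFree} I would substitute this factorization into \eqref{mom-free} and regroup along the bijection:
\[
E(a_1\cdots a_n)=\sum_{\pi\in NC(n)}r^{\mathcal B}_\pi(a_1,\dots,a_n)=\sum_{\sigma\in I(n)}\ \prod_{I_j\in\sigma}\Big(\sum_{\rho\in\mathcal{NC}_{irr}(n_j)}r^{\mathcal B}_\rho(a_{I_j})\Big).
\]
Setting $\gamma_k:=\sum_{\rho\in\mathcal{NC}_{irr}(k)}r^{\mathcal B}_\rho$, the right-hand side equals $\sum_{\sigma\in I(n)}(\gamma)_\sigma(a_1,\dots,a_n)$, since the multiplicative extension over an interval partition is just the ordered product over its blocks. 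Comparing with \eqref{mom-boo} and using that this relation has a unique solution (the term $\sigma=1_n$ isolates the $n$-th cumulant in terms of $E_n$ and lower cumulants, cf.\ \eqref{boo-mom}), we obtain $\gamma_n=\beta^{\mathcal B}_n$, which is \eqref{OVBooFree}.

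Finally \eqref{OVFreeBoo} is the inversion of \eqref{OVBooFree}. Substituting $\beta^{\mathcal B}_{|V|}=\sum_{\rho_V\in\mathcal{NC}_{irr}(|V|)}r^{\mathcal B}_{\rho_V}$ into $\beta^{\mathcal B}_\pi$ for $\pi\in\mathcal{NC}_{irr}(n)$ and distributing, each choice $(\rho_V)_{V\in\pi}$ contributes $r^{\mathcal B}_{\hat\pi}$, where $\hat\pi$ is the common refinement of $\pi$ by the $\rho_V$; since each $\rho_V$ keeps $\min V$ and $\max V$ in one block, $\hat\pi$ runs precisely over $\{\hat\pi\in\mathcal{NC}_{irr}(n):\hat\pi\ll\pi\}$ as the $(\rho_V)$ vary. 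Hence
\[
\sum_{\pi\in\mathcal{NC}_{irr}(n)}(-1)^{|\pi|-1}\beta^{\mathcal B}_\pi=\sum_{\hat\pi\in\mathcal{NC}_{irr}(n)}r^{\mathcal B}_{\hat\pi}\Big(\sum_{\hat\pi\ll\pi\ll 1_n}(-1)^{|\pi|-1}\Big),
\]
and the claim reduces to the purely combinatorial identity that the inner alternating sum is $1$ if $\hat\pi=1_n$ and $0$ otherwise. I expect this cancellation to be the only genuine obstacle: one checks that the interval $[\hat\pi,1_n]$ in the min-max order $\ll$ is a Boolean lattice --- its elements obtained by merging an arbitrary subset of the top-level nested blocks of $\hat\pi$ into the block containing $1$ and $n$ --- so that $\sum(-1)^{|\pi|-1}$ is, up to sign, an alternating sum over a Boolean lattice of positive rank and therefore vanishes unless $\hat\pi=1_n$. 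As stressed around Lemma \ref{Lemma.useful}, none of this is sensitive to whether the cumulants are scalar- or $\mathcal B$-valued, so the scalar arguments of \cite{AHLV} and \cite{ABFN} carry over verbatim.
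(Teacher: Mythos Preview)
The paper does not prove this proposition at all: it simply quotes it as Proposition~5.4 of \cite{ABFN}. Your argument is therefore not competing with a proof in the paper but supplying one, and its overall shape --- decompose $NC(n)$ as interval skeleton times irreducible pieces for \eqref{OVBooFree}, then invert via the $\ll$ order for \eqref{OVFreeBoo} --- is exactly the standard route and matches the spirit of the paper's own arguments for the monotone relations in Proposition~\ref{Prop.relation.cumulants} (same use of Lemma~\ref{Lemma.useful}, same reduction to a purely combinatorial identity already known in the scalar case).

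One point deserves tightening. Your description of why $[\hat\pi,1_n]_{\ll}$ is a Boolean lattice (``merging an arbitrary subset of the top-level nested blocks into the block containing $1$ and $n$'') is not quite right: for instance, with $n=7$ and $\hat\pi=\{\{1,7\},\{2,4\},\{3\},\{5,6\}\}$ there are only two top-level blocks $\{2,4\},\{5,6\}$, yet the interval has $8=2^{|\hat\pi|-1}$ elements, not $4$. The correct picture is that blocks of $\hat\pi$ form a rooted tree under nesting, and partitions $\pi$ with $\hat\pi\ll\pi$ correspond bijectively to subsets of the $|\hat\pi|-1$ edges of that tree (cut an edge to separate a block from its parent); on this interval $\ll$ coincides with $\leq$, so it is indeed a Boolean lattice of rank $|\hat\pi|-1$ and your alternating-sum conclusion stands. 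With this corrected description the proof is complete; alternatively, as you note, the identity is purely combinatorial and already established in the scalar literature, so either route closes the argument.
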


We now proceed to obtain the remaining formulas in the OV level, which is the main purpose of this section, and then transfer them to the OVI level. Many of the usual ideas in the scalar-valued case carryover to the OV setting, the main difficulty is to deal with the technical issues that arose from the non-commutativity of the operators (opposed to the commutativity of the scalar values). The following technical Lemma is very useful when dealing with these issues. \\

Let us use the notation
$NC^{(2)}(n):=\{(\sigma,\pi)\in NC(n)^2: \sigma \leq \pi\}$
and $NC^{(2)}:=\bigsqcup_{n=1}^\infty NC^{(2)}(n)$.

\begin{lem}
\label{Lemma.useful}
Let $\{f_\pi\}_{\pi\in NC}$ be a multiplicative family of $\mathcal{B}$-bimodule maps, and let $c:NC\to\mathbb{C}$ be an arbitrary map.  Define $C:NC^{(2)}\to \mathbb{C}$ by $C(\sigma,\pi)=\prod_{V\in\pi} c(\sigma|V)$ and consider the family 
$$g_\pi:= \sum_{\substack{\sigma\in NC(n)\\ \sigma\leq \pi}} C(\sigma,\pi) f_\sigma, \qquad \forall \pi \in NC.$$
Then the family $\{g_\pi\}_{\pi\in NC}$ is also multiplicative. 
\end{lem}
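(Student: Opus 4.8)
The plan is to verify multiplicativity directly from the definition given in Definition \ref{partitions.multiplicative}: I must show that if $W = \{l+1, \dots, l+k\}$ is an interval block of $\pi \in NC(n)$ and $\pi' = \pi \setminus W \in NC(n-k)$, then
\[
g_\pi(x_1, \dots, x_n) = g_{\pi'}\bigl(x_1, \dots, x_l\, g_k(x_{l+1}, \dots, x_{l+k}),\, x_{l+k+1}, \dots, x_n\bigr),
\]
where $g_k := g_{1_k}$. The key combinatorial fact is that the map $\sigma \mapsto (\sigma|_W,\ \sigma \setminus (\sigma|_W))$ is a bijection between $\{\sigma \in NC(n) : \sigma \leq \pi\}$ and pairs $(\rho, \sigma') $ with $\rho \in NC(k)$ and $\sigma' \in NC(n-k)$, $\sigma' \leq \pi'$; this works precisely because $W$ is an interval block, so any $\sigma \leq \pi$ restricts to a non-crossing partition on the interval $W$ and the rest, with no interaction. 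The second ingredient is the behaviour of the coefficient $C$ under this splitting: since $W$ is a block of $\pi$ and the blocks of $\pi'$ are exactly the blocks of $\pi$ other than $W$, we get $C(\sigma,\pi) = c(\sigma|_W)\cdot \prod_{V \in \pi'} c(\sigma'|_V) = c(\rho)\, C(\sigma', \pi')$ — this is the whole reason $C$ was defined multiplicatively over blocks of $\pi$.

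First I would expand $g_\pi(x_1,\dots,x_n) = \sum_{\sigma \leq \pi} C(\sigma,\pi) f_\sigma(x_1,\dots,x_n)$ and use the bijection to rewrite the sum as a double sum over $\rho \in NC(k)$ and $\sigma' \leq \pi'$ in $NC(n-k)$, with coefficient $c(\rho)C(\sigma',\pi')$. Next, since $\{f_\pi\}_{\pi \in NC}$ is multiplicative and $W$ is an interval block of the reconstructed partition $\sigma$, I can apply the defining recursion for $f$: $f_\sigma(x_1,\dots,x_n) = f_{\sigma'}(x_1,\dots,x_l\, f_\rho(x_{l+1},\dots,x_{l+k}), x_{l+k+1},\dots,x_n)$. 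Here I should be a little careful: the interval block $W$ of $\sigma$ used in the recursion is a union of blocks of $\sigma$ only if $\rho = 1_k$; in general I instead peel off interval blocks of $\rho$ one at a time, which collapses the $x_{l+1},\dots,x_{l+k}$ slots into the single entry $f_\rho(x_{l+1},\dots,x_{l+k})$ sitting in position $l+1$ of an $(n-k+1)$-tuple, after which $\sigma'$ acts on that tuple. Using multilinearity of $f_{\sigma'}$ in its $(l+1)$-st slot, I can then pull the sum over $\rho$ inside:
\[
g_\pi(x_1,\dots,x_n) = \sum_{\sigma' \leq \pi'} C(\sigma',\pi')\, f_{\sigma'}\Bigl(x_1,\dots,x_l \Bigl(\sum_{\rho \in NC(k)} c(\rho) f_\rho(x_{l+1},\dots,x_{l+k})\Bigr), x_{l+k+1},\dots,x_n\Bigr).
\]
The inner sum over $\rho \in NC(k)$ is exactly $g_k(x_{l+1},\dots,x_{l+k}) = g_{1_k}(x_{l+1},\dots,x_{l+k})$, since for $\pi = 1_k$ the constraint $\sigma \leq 1_k$ is vacuous and $C(\sigma, 1_k) = c(\sigma)$. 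Thus the right-hand side becomes $g_{\pi'}(x_1,\dots,x_l\, g_k(x_{l+1},\dots,x_{l+k}), x_{l+k+1},\dots,x_n)$, which is the desired recursion, and (appealing to the remark after Definition \ref{partitions.multiplicative} that the recursion is independent of the chosen interval block) this establishes that $\{g_\pi\}$ is multiplicative.

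The main obstacle I anticipate is the bookkeeping in the middle step: correctly arguing that applying the $f$-recursion to peel off the block $W$ from $\sigma$ (when $\rho \neq 1_k$, this means peeling several interval sub-blocks of $\rho$) reproduces precisely $f_{\sigma'}$ evaluated with $f_\rho(x_{l+1},\dots,x_{l+k})$ inserted as a single argument. This is morally just associativity of the multiplicative construction restricted to an interval, but writing it cleanly requires either a secondary induction on $|\rho|$ or an appeal to the well-definedness of the multiplicative family (independence of the order in which interval blocks are removed). I would handle it by induction on $k$ or on $|\rho|$, noting that the $\mathcal{B}$-bimodule property of the $f_\pi$ is what lets the partial evaluations $x_l\,(\cdots)$ and $(\cdots)\,x_{l+k+1}$ be absorbed correctly. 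Everything else — the bijection of partitions, the factorization of $C$, and recognizing the inner sum as $g_k$ — is routine.
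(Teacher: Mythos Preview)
Your proposal is correct and follows essentially the same approach as the paper: both arguments rest on the bijection $\sigma \leftrightarrow (\sigma|_W,\ \sigma|_{[n]\setminus W})$ for $\sigma\leq\pi$, the factorization $C(\sigma,\pi)=c(\sigma|_W)\,C(\sigma|_{[n]\setminus W},\pi')$, and the nesting identity $f_\sigma=f_{\sigma'}(\dots,f_\rho(\dots),\dots)$ coming from multiplicativity of $\{f_\pi\}$. The only cosmetic difference is direction---the paper expands $g_{\pi'}(\dots,g_k(\dots),\dots)$ and collapses to $g_\pi$, while you expand $g_\pi$ and regroup---and the paper silently uses the same ``peel off $\rho$ block by block'' step that you (rightly) flagged as needing a short secondary induction.
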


\begin{rem}
The previous Lemma is a slight generalization of Proposition 2.1.7 in \cite{Spe98}, which is restated here in Remark \ref{rem.spei.mob}. The first part of this remark is the special case where $c(\pi)=1$ $\forall\pi\in NC$.
\end{rem}

\begin{proof}
We follow the same ideas of (Proposition 2.1.7, \cite{Spe98}). To corroborate that $g_\pi$ is multiplicative, we only need to check that the recurrence holds. So take $\pi\in NC(n)$, pick $V=\{l+1,\dots l+k\}\in \pi$ a interval block of $\pi$, and set $\pi'=\pi\backslash V\in NC(n-k)$. y substituting, $g_{\pi'},g_k$ and using linearity we obtain:
\begin{align*}
    &g_{\pi'}(x_1,\dots,x_l g_k(x_{l+1},\dots,x_{l+k}),x_{l+k+1},\dots,x_n)\\
    &=\sum_{\sigma_1\in NC(V)} C(\sigma,1_V) f_{\pi'}(x_1,\dots,x_l \sum_{\sigma_1\in NC(V)} C(\sigma_1,1_V) f_{\sigma_1}(x_{l+1},\dots,x_{l+k}),x_{l+k+1},\dots,x_n) \\
    &= \sum_{\sigma_1\in NC(V)} C(\sigma,1_V) \sum_{\sigma_2\in NC([n]\backslash V)} C(\sigma_2,\pi') f_{\sigma_2}(x_1,\dots,x_l f_{\sigma_1}(x_{l+1},\dots,x_{l+k}),x_{l+k+1},\dots,x_n).
\end{align*}
Then we construct $\sigma\in NC(n)$ as the unique partition such that $\sigma\leq \{V, [n]\backslash V\}$, $\sigma|V=\sigma_1$ and $\sigma|([n]\backslash V)=\sigma_2$. Thus we get
\begin{align*}
& \sum_{\sigma_1\in NC(V)}\sum_{\sigma_2\in NC([n]\backslash V)} C(\sigma_1,1_V)  C(\sigma_2,\pi') f_{\sigma_2}(x_1,\dots,x_l f_{\sigma_1}(x_{l+1},\dots,x_{l+k}),x_{l+k+1},\dots,x_n) \\
    &= \sum_{\substack{\sigma\in NC(n)\\ \sigma\leq\pi}} \prod_{V\in\pi} C(\sigma|V) f_{\sigma}(x_1,\dots,x_n) \\
    &= \sum_{\substack{\sigma\in NC(n)\\ \sigma\leq \pi}} C(\sigma,\pi)f_{\sigma}(x_1,\dots,x_n) \\
    &= g_\pi(x_1,\dots,x_n),
\end{align*}
as desired.
\end{proof}

\begin{prop}
\label{Prop.relation.cumulants}
Suppose that $(\mathcal{A},\mathcal{B},E)$ is an OV probability space, and let $\{ r^{\mathcal{B}}_n\}_{n \ge 1}$, $\{ \beta^{\mathcal{B}}_n\}_{n \ge 1}$, and $\{ h^{\mathcal{B}}_n\}_{n \ge 1}$ be the families of OV cumulants. Then, the following relations among them hold:
\begin{eqnarray}
	\beta^{\mathcal{B}}_n (a_1, \dots, a_n) 
		&=& \sum_{\pi\in \mathcal{NC}_{irr}(n)} \frac{1}{\tau(\pi)!}h^{\mathcal{B}}_\pi(a_1, \dots, a_n), \label{OVBooMon} \\
	r^{\mathcal{B}}_n (a_1, \dots, a_n) 
		&=& \sum_{\pi\in \mathcal{NC}_{irr}(n)} \frac{(-1)^{|\pi|-1}}{\tau(\pi)!} h^{\mathcal{B}}_\pi(a_1, \dots, a_n), \label{OVFreeMon}
\end{eqnarray}
for every $n$ and elements $a_1,\dots, a_n\in \mathcal{A}$.
\end{prop}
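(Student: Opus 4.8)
The plan is to carry out the scalar-valued derivation of \cite{AHLV} essentially unchanged, the role played by commutativity there being taken over here by Lemma \ref{Lemma.useful}; concretely, that lemma lets us rewrite each operator-valued moment--cumulant formula in \emph{multiplicative} form, after which the argument becomes purely combinatorial. First I would record these multiplicative forms. For free cumulants this is Speicher's classical identity recalled in Remark \ref{rem.spei.mob}, $E_\pi=\sum_{\sigma\le\pi}r^{\mathcal{B}}_\sigma$ for every $\pi\in NC(n)$. For monotone cumulants I would apply Lemma \ref{Lemma.useful} to the multiplicative family $\{h^{\mathcal{B}}_\pi\}$ with the scalar map $c(\rho):=1/\tau(\rho)!$: the family $g_\pi:=\sum_{\sigma\le\pi}\bigl(\prod_{V\in\pi}\tfrac{1}{\tau(\sigma|V)!}\bigr)h^{\mathcal{B}}_\sigma$ is then multiplicative, and since $\prod_{V\in 1_n}\tfrac{1}{\tau(\sigma|V)!}=\tfrac{1}{\tau(\sigma)!}$ one has $g_{1_n}=E_n$ by \eqref{mom-mon}; as a multiplicative family is determined by its generators $\{g_{1_n}\}_n$, this forces $g_\pi=E_\pi$, i.e.
\[
E_\pi=\sum_{\sigma\le\pi}\Bigl(\prod_{V\in\pi}\tfrac{1}{\tau(\sigma|V)!}\Bigr)h^{\mathcal{B}}_\sigma,\qquad \pi\in NC(n).
\]

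Next I would combine this with Möbius inversion on $NC(n)$. Writing $r^{\mathcal{B}}_n=r^{\mathcal{B}}_{1_n}=\sum_{\sigma\le 1_n}Mob(\sigma,1_n)\,E_\sigma$ as in \eqref{free-mom}, substituting the displayed identity, and interchanging the two sums gives
\[
r^{\mathcal{B}}_n=\sum_{\rho\in NC(n)}\Bigl(\,\sum_{\rho\le\sigma\le 1_n}Mob(\sigma,1_n)\prod_{V\in\sigma}\tfrac{1}{\tau(\rho|V)!}\,\Bigr)\,h^{\mathcal{B}}_\rho .
\]
Everything is now reduced to showing that the constant in parentheses equals $(-1)^{|\rho|-1}/\tau(\rho)!$ when $\rho\in\mathcal{NC}_{irr}(n)$ and vanishes otherwise; this is \eqref{OVFreeMon}. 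For \eqref{OVBooMon} one may repeat the argument starting from the Boolean moment--cumulant formula \eqref{boo-mom}, or, more economically, substitute \eqref{OVFreeMon} into each $r^{\mathcal{B}}_\pi$ appearing in \eqref{OVBooFree} of Proposition \ref{Prop.relation.boofree.cumulants}: multiplicativity of $\{h^{\mathcal{B}}_\pi\}$ yields $r^{\mathcal{B}}_\pi=\sum_{\rho\ll\pi}(-1)^{|\rho|-|\pi|}\bigl(\prod_{V\in\pi}\tfrac{1}{\tau(\rho|V)!}\bigr)h^{\mathcal{B}}_\rho$, so summing over $\pi\in\mathcal{NC}_{irr}(n)$ and collecting the coefficient of each $h^{\mathcal{B}}_\rho$ (the surviving $\pi$ being exactly those with $\rho\ll\pi$ and $\pi\in\mathcal{NC}_{irr}(n)$, which in particular forces $\rho$ to be irreducible) again leaves a purely combinatorial coefficient identity.

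The main obstacle is precisely these coefficient identities on $\bigl(NC(n),\le,\ll,Mob,\tau(\cdot)!\bigr)$. They carry no trace of $\mathcal{A}$, $\mathcal{B}$ or $E$, so they are the very combinatorial content underlying the scalar-valued relations of \cite{AHLV} and may be quoted from there; alternatively each can be established self-contained, by induction on $n$ using the recursive description \eqref{eq.tree.factorial} of $\tau(\pi)!$ and the characterization that $\rho\in\mathcal{NC}_{irr}(n)$ iff $1$ and $n$ lie in a common block of $\rho$. Once Lemma \ref{Lemma.useful} has put the moment--cumulant relations into multiplicative form, no step of the argument distinguishes operators from scalars, so nothing beyond this combinatorial input is needed; feeding the identities back into the two displays above then yields \eqref{OVFreeMon} and \eqref{OVBooMon}.
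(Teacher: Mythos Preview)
Your argument is correct and follows the same overall philosophy as the paper --- use Lemma~\ref{Lemma.useful} to put things in multiplicative form and then reduce to a purely combinatorial identity imported from the scalar case \cite{AHLV} --- but the two proofs traverse the triangle of cumulant relations in opposite directions.

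The paper applies Lemma~\ref{Lemma.useful} with the weight $c(\pi)=\tfrac{1}{\tau(\pi)!}\,\delta_{\pi\in NC_{irr}}$, which naturally produces sums indexed by $\sigma\ll\pi$ rather than $\sigma\le\pi$; exploiting the fact that every $\sigma\in NC(n)$ has a unique interval closure $\pi\in\mathcal I(n)$ with $\sigma\ll\pi$, the paper obtains \eqref{OVBooMon} \emph{directly}, without appealing to any scalar identity. Only for \eqref{OVFreeMon} does it then invoke the scalar case (via \eqref{OVFreeBoo}). By contrast, you take $c(\rho)=\tfrac{1}{\tau(\rho)!}$ to obtain the general $E_\pi$--monotone relation, Möbius-invert to reach \eqref{OVFreeMon} first, and then derive \eqref{OVBooMon} either by repeating or by substituting into \eqref{OVBooFree}; both branches of your argument ultimately rest on a scalar coefficient identity. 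So the paper's route is slightly more self-contained for the Boolean--monotone relation, while your route is more uniform (both formulas handled by the same Möbius-inversion mechanism). Either choice is perfectly acceptable.
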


\begin{proof}
To prove \eqref{OVBooMon} we take $\pi\in \mathcal{NC}$ and define: 
$$
	\alpha_\pi (a_1, \dots, a_n):= \sum_{\substack{\sigma\in \mathcal{NC}(n)\\ \sigma \ll\pi}} \left( \prod_{V\in \pi} \frac{1}{\tau(\sigma|V)!}\right) h^{\mathcal{B}}_\sigma(a_1, \dots, a_n).
$$
Using Lemma \ref{Lemma.useful} with $c(\pi)=\frac{1}{\tau(\pi)!} \delta_{\pi\in NC_{irr}}$ we get that $\{\alpha_\sigma\}_{\sigma\in NC}$ is a multiplicative family. Observe that for $\pi\in \mathcal{I}$ and any $\sigma \ll \pi$ one has the nice formula
$\prod_{V\in \pi} \frac{1}{\tau(\sigma|V)!}= \frac{1}{\tau(\sigma)!}.$

Then we can use the OV monotone moment-cumulant formula \eqref{mom-free} to compute:
\begin{equation}
	\widetilde{E}_n =\sum_{\sigma \in \mathcal{NC}(n)} \frac{h^{\mathcal{B}}_\sigma}{\tau(\sigma)!}  =\sum_{\pi \in \mathcal{I}(n)} \sum_{\substack{\sigma \in \mathcal{NC}(n)\\  \sigma\ll\pi}} \frac{h^{\mathcal{B}}_\sigma}{\tau(\sigma)!}=\sum_{\pi \in \mathcal{I}(n)} c_{\pi}.
\end{equation}
Therefore, the family $\{\alpha_\sigma\}_{\sigma\in NC}$ satisfies the OV Boolean moment-cumulant formula \eqref{mom-boo} for all $n$ and elements $a_1,\dots, a_n\in\mathcal{A}$. Thus, we conclude
$$
\beta^{\mathcal{B}}_n (a_1, \dots, a_n)=\alpha_{1_n} (a_1, \dots, a_n)=  \sum_{\substack{\sigma\in \mathcal{NC}(n)\\ \sigma \ll\pi}} \frac{1}{\tau(\sigma)!} h^{\mathcal{B}}_\sigma(a_1, \dots, a_n).
$$

To get \eqref{OVFreeMon} we are going to use the fact the previous equalities depend on the combinatorial structure. More precisely we are going to use that this relation is known in the scalar case. We know that the multiplicative families $\{\beta^{\mathcal{B}}_\sigma\}_{\sigma\in NC}$ and $\{h^{\mathcal{B}}_\sigma\}_{\sigma\in NC}$ are related via the following formula:
$$\beta^{\mathcal{B}}_\pi (a) = \sum_{\substack{\sigma\in \mathcal{NC}(n)\\ \sigma\ll \pi}}  \left( \prod_{V\in \pi} \frac{1}{\tau(\sigma|V)!}\right)  h^{\mathcal{B}}_\sigma(a).$$ 
Using formula \eqref{OVFreeBoo}, we get 
\begin{align*}
	r^{\mathcal{B}}_n(a) 
	&= \sum_{\pi\in \mathcal{NC}_{irr}(n)} (-1)^{|\pi|-1} \beta^{\mathcal{B}}_\pi\\
	&=\sum_{\substack{ \pi\in \mathcal{NC}(n)\\ \pi\ll 1_n }} (-1)^{|\pi|-1}  \sum_{\substack{\sigma\in \mathcal{NC}(n)\\ \sigma\ll \pi}}  \left( \prod_{V\in \pi} \frac{1}{\tau(\sigma|V)!}\right)  h^{\mathcal{B}}_\sigma(a)\\
	&=\sum_{\substack{ \sigma\in \mathcal{NC}(n)\\ \sigma\ll 1_n }}
	h^{\mathcal{B}}_\sigma(a) \sum_{\substack{ \pi\in \mathcal{NC}(n)\\ 1_n\gg\pi\gg\sigma}}  (-1)^{|\pi|-1} \left( \prod_{V\in \pi} \frac{1}{\tau(\sigma|V)!}\right).
\end{align*}

In order to conclude we need to check that the following holds,
\begin{equation}
\label{comb.ident}
\sum_{\substack{ \pi\in \mathcal{NC}(n)\\ 1_n\gg\pi\gg\sigma}}  (-1)^{|\pi|-1} \left( \prod_{V\in \pi} \frac{1}{\tau(\sigma|V)!}\right)= \frac{(-1)^{|\sigma|-1}}{\tau(\sigma)!}, \qquad \forall \sigma\ll 1_n.
\end{equation}
Although this combinatorial equation is highly not trvial, we already know that this equality holds, since we get the same in the scalar valued case. Namely, thanks to \cite{AHLV} we already know that the formulas \eqref{OVFreeBoo}, \eqref{OVBooMon} and \eqref{OVFreeMon} hold in the scalar case $\mathcal{B}=\mathbb{C}$. Thus by our previous steps (and \eqref{OVFreeMon} in the scalar case) we get
$$\sum_{\substack{ \sigma\in \mathcal{NC}(n)\\ \sigma\ll 1_n }} \frac{(-1)^{|\sigma|-1}}{\tau(\sigma)!}
	h^{\mathbb{C}}_\sigma(a) =\sum_{\substack{ \sigma\in \mathcal{NC}(n)\\ \sigma\ll 1_n }}
	h^{\mathbb{C}}_\sigma(a) \sum_{\substack{ \pi\in \mathcal{NC}(n)\\ 1_n\gg\pi\gg\sigma}}  (-1)^{|\pi|-1} \left( \prod_{V\in \pi} \frac{1}{\tau(\sigma|V)!}\right), \qquad \forall a_1,\dots,a_n\in \mathcal{A}.$$
Since this holds for every $a_1,\dots,a_n\in \mathcal{A}$, by varying them we obtain \eqref{comb.ident}. Thus, returning to the OV case, we conclude that \eqref{OVFreeMon} holds.
\end{proof}

Once we have these two formulas relating Boolean and free with monotone cumulants we can invert them using the main result of \cite{celestino2020cumulant}.

\begin{defn}
Let $\pi$ be an irreducible non-crossing partition. We will write $\omega_k(\pi)$ for the number of increasing $k$-colored non-crossing partitions. That is, the number of ways one can decorate the blocks with elements of the set $[k]$ in such a way that if the block $V$ is nested inside the block $W$ this implies $f(V)<f(W)$.  We also define
\begin{equation}
\label{eq.muruaomega}
	\omega(\pi):=\sum\limits_{k=1}^n\frac{(-1)^{k+1}}{k}\omega_k(\pi).
\end{equation}
\end{defn}

\begin{cor}
\label{cor.OV.mon.boofree}
Suppose that $(\mathcal{A},\mathcal{B},E)$ is an OV probability space, and let $\{ r^{\mathcal{B}}_n\}_{n \ge 1}$, $\{ \beta^{\mathcal{B}}_n\}_{n \ge 1}$, and $\{ h^{\mathcal{B}}_n\}_{n \ge 1}$ be the families of OV cumulants. Then, the following relations among them hold:
\begin{eqnarray}
	h^{\mathcal{B}}_n (a_1, \dots, a_n) 
		&=& \sum_{\pi\in \mathcal{NC}_{irr}(n)} \omega(\pi) \beta^{\mathcal{B}}_\pi(a_1, \dots, a_n), \label{OVMonBoo} \\
	h^{\mathcal{B}}_n (a_1, \dots, a_n) 
		&=& \sum_{\pi\in \mathcal{NC}_{irr}(n)} (-1)^{|\pi|-1}\omega(\pi) r^{\mathcal{B}}_\pi(a_1, \dots, a_n), \label{OVMonFree}
\end{eqnarray}
for every $n$ and elements $a_1,\dots, a_n\in \mathcal{A}$.
\end{cor}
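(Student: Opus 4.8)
The plan is to obtain \eqref{OVMonBoo} and \eqref{OVMonFree} by inverting the two relations \eqref{OVBooMon} and \eqref{OVFreeMon} of Proposition~\ref{Prop.relation.cumulants}, doing the operator-valued bookkeeping through Lemma~\ref{Lemma.useful} and importing the actual combinatorial inversion from the scalar-valued result of \cite{celestino2020cumulant}. Fix $n$ and, in exact analogy with the proof of Proposition~\ref{Prop.relation.cumulants}, define for each $\pi\in\mathcal{NC}(n)$
\[
\alpha_\pi:=\sum_{\substack{\sigma\in\mathcal{NC}(n)\\ \sigma\ll\pi}}\Big(\prod_{V\in\pi}\omega(\sigma|V)\Big)\beta^{\mathcal{B}}_\sigma .
\]
By Lemma~\ref{Lemma.useful} with $c(\pi)=\omega(\pi)\,\delta_{\pi\in\mathcal{NC}_{irr}}$ (the Kronecker delta collapsing the sum $\sigma\le\pi$ to $\sigma\ll\pi$, exactly as in Proposition~\ref{Prop.relation.cumulants}), the family $\{\alpha_\pi\}_{\pi\in\mathcal{NC}}$ is a multiplicative family of $\mathcal{B}$-bimodule maps. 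Since \eqref{OVMonBoo} is precisely the equality $h^{\mathcal{B}}_n=\alpha_{1_n}$ and multiplicativity propagates an equality at $1_n$ to all $\pi$, it suffices to check $\alpha_{1_n}=h^{\mathcal{B}}_n$; this also yields $\alpha_\pi=h^{\mathcal{B}}_\pi$ for every $\pi$, which will be convenient below.

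To verify $\alpha_{1_n}=h^{\mathcal{B}}_n$, substitute into $\alpha_{1_n}$ the multiplicative form of \eqref{OVBooMon}, namely $\beta^{\mathcal{B}}_\sigma=\sum_{\rho\ll\sigma}\big(\prod_{V\in\sigma}\tfrac{1}{\tau(\rho|V)!}\big)h^{\mathcal{B}}_\rho$ already recorded in the proof of Proposition~\ref{Prop.relation.cumulants}, interchange the two summations, and collect the coefficient of each $h^{\mathcal{B}}_\rho$. This reduces the claim to the combinatorial identity
\[
\sum_{\substack{\sigma\in\mathcal{NC}(n)\\ \rho\ll\sigma\ll 1_n}}\omega(\sigma)\prod_{V\in\sigma}\frac{1}{\tau(\rho|V)!}=\delta_{\rho,1_n}\qquad\text{for every }\rho\ll 1_n .
\]
For \eqref{OVMonFree} one substitutes the already-proven \eqref{OVFreeMon}, in its multiplicative form, into the right-hand side of \eqref{OVMonFree} and proceeds identically; the extra per-block signs telescope to the global factor $(-1)^{|\rho|-1}$, which is $1$ on the only surviving term $\rho=1_n$, so \eqref{OVMonFree} reduces to the very same identity just displayed. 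As in the proof of Proposition~\ref{Prop.relation.cumulants}, this combinatorial identity need not be checked by hand: it is precisely the statement that, in the scalar case, the monotone-to-Boolean formula (coefficients $\omega(\pi)$) inverts the Boolean-to-monotone formula (coefficients $\tfrac1{\tau(\pi)!}$) — the map $\omega$ being introduced in \cite{celestino2020cumulant} for exactly this purpose — and running the same substitution over $\mathcal{B}=\mathbb{C}$ and letting $a_1,\dots,a_n$ vary over an algebra on which the multilinear maps $\{h^{\mathbb{C}}_\rho\}_{\rho\in\mathcal{NC}(n)}$ are linearly independent gives the displayed identity.

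I do not expect a serious obstacle: the entire combinatorial substance is supplied by \cite{celestino2020cumulant}, no new partition identity has to be proved, and nothing in the argument touches the noncommutativity of $\mathcal{B}$ — Lemma~\ref{Lemma.useful} is tailored so that a multiplicative-family identity proved for scalar-valued maps transfers verbatim to $\mathcal{B}$-bimodule maps, since the numbers attached to the partitions are the same in both settings. The one step deserving a word is the same one that already appears in the proof of Proposition~\ref{Prop.relation.cumulants}: passing from a scalar identity among the maps $\{h^{\mathbb{C}}_\rho\}_\rho$ to the numerical identity among the coefficients indexed by $\rho$, which rests on the linear independence of those maps over a sufficiently rich probability space (for instance one generated by enough freely independent Bernoulli variables). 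Granting this, \eqref{OVMonBoo} and \eqref{OVMonFree} follow.
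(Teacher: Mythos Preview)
Your proposal is correct and is essentially the approach the paper intends: the paper states the corollary follows by inverting \eqref{OVBooMon} and \eqref{OVFreeMon} via the main result of \cite{celestino2020cumulant}, and you have carefully spelled out that inversion by mirroring the proof of Proposition~\ref{Prop.relation.cumulants} (defining a candidate multiplicative family via Lemma~\ref{Lemma.useful}, substituting, and importing the resulting numerical identity from the scalar case). The sign bookkeeping for \eqref{OVMonFree} and the reduction to the same combinatorial identity as for \eqref{OVMonBoo} are handled correctly.
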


We now have all the six formulas relating free, Boolean and monotone cumulants in the OV setting. To prove Theorem \ref{Thm.relation.cumulants}, we can directly translate previous formulas to the analogue formulas in the OVI setting.

\begin{proof}[Proof of Theorem \ref{Thm.relation.cumulants}]
To obtain the six formulas relating OVI cumulants, the general idea is to consider the upper triangular operator-valued probability space $(\widetilde{\mathcal{A}},\widetilde{\mathcal{B}},\widetilde{E})$, apply the corresponding formula in the OV setting, and reading the $(1,2)$-entry when writing the proposition in terms of our original $(\mathcal{A},\mathcal{B},E,E')$ space.  For instance, let us see how to write monotone cumulants in terms of Boolean cumulants (see \eqref{OVIMonBoo}). Fix $n\in\mathbb{n}$ and $a_1,\dots,a_n\in\mathcal{A}$, and  for $i=1,\dots,n$ consider $A_i:=a_iI_2\in \widetilde{\mathcal{A}}$ the $2\times 2$ diagonal matrix with both diagonal entries equal to $a_i$. Then in the upper triangular operator-valued probability space $(\widetilde{\mathcal{A}},\widetilde{\mathcal{B}},\widetilde{E})$ we can apply the monotone-Boolean formula in the OV case (see \eqref{OVMonBoo} from  Corollary \ref{cor.OV.mon.boofree}) to obtain
\[
 h^{\widetilde{\mathcal{B}}}_n  (A_1,\dots,A_n)= \sum_{\pi\in \mathcal{NC}_{irr}(n)} \omega(\pi) \beta^{\widetilde{\mathcal{B}}}_\pi (A_1,\dots,A_n),
\]
and using Lemma \ref{lemma10.Tseng} this is equivalent to
\[
 \begin{bmatrix} h^{\mathcal{B}}_n(a_1,\dots,a_n) & \partial  h^{\mathcal{B}}_n(a_1,\dots,a_n) \\ 0 &  h^{\mathcal{B}}_n(a_1,\dots,a_n)  \end{bmatrix} =\sum_{\pi\in \mathcal{NC}_{irr}(n)} \omega(\pi)  \begin{bmatrix} \beta^{\mathcal{B}}_\pi(a_1,\dots,a_n) & \partial  \beta^{\mathcal{B}}_\pi(a_1,\dots,a_n) \\ 0 &  \beta^{\mathcal{B}}_\pi(a_1,\dots,a_n)  \end{bmatrix}.
\]
Reading the $(1,2)$-entry we obtain \eqref{OVIMonBoo}. Similarly to obtain the monotone-free cumulant (also in \eqref{OVIMonFree}) we can just take the corresponding OV formula \eqref{OVMonBoo} from Corollary \ref{cor.OV.mon.boofree}.

With the same procedure we can use \eqref{OVBooFree} and \eqref{OVFreeBoo} from Proposition \ref{Prop.relation.boofree.cumulants} to get the two equations in \eqref{OVIBooFree}. And we can use \eqref{OVBooMon} and \eqref{OVFreeMon} from Proposition \ref{Prop.relation.cumulants} to obtain the two equations in \eqref{OVIBooMon}.

\end{proof}

\begin{rem}
The relation among free and Boolean cumulants enables to use Boolean cumulants in the study free infinite divisibility via the Boolean Bercovici–Pata bijection. This map assigns to each distribution $\mu$ another distribution $\mathbb{B}(\mu)$, such that the free cumulants of $\mathbb{B}(\mu)$ concide with the Boolean cumulants of $\mu$. Furthermore, it is possible to define maps that continuously interpolate between $\mu$ and $\mathbb{B}(\mu)$ (see the work of Belinschi and Nica \cite{BN1,BN2,BN3}). The operator-valued version of these results was obtained in \cite{ABFN} and the combinatorial results rely on Proposition \ref{Prop.relation.boofree.cumulants}. Once we know how to go from OV to OVI by using the upper triangular probability space associated to each infinitesimal probability space we can translate various results from \cite{ABFN} to OVI setting on the algebraic level. This would follow the same lines on how the infinitesimal scalar case can be obtained from the scalar case as done in \cite{celestino2019relations}.
\end{rem}

\section{Differentiable Paths}
\label{sec:differentiable.paths}

In this section we use differentiable paths to compute the infinitesimal Boolean convolution and infinitesimal monotone convolution which was obtained in Sections 3.1 and 4.1. We will follow the ideas of Section $4.2$ of \cite{tseng2019operator}, where  differentiable paths were used to compute the infinitesimal free convolution.

Suppose that $\mathcal{B}$ is a unital $C^*$-algebra, and $\mathcal{B}\langle X \rangle$ is the $*$-algebra of non-commutative polynomials over $\mathcal{B}$ such that $\mathcal{B}$ and $X$ are algebraically free with $X=X^*.$ 
$\mu$ is called a \emph{$\mathcal{B}$-valued distribution} if $\mu$ is a linear $\mathcal{B}$-bimodule completely positive map $\mu:\mathcal{B}\langle X\rangle \to \mathcal{B}$ with $\mu(1)=1$. A $\mathcal{B}$-distribution $\mu$ is called \emph{exponentially bounded} if there is $M>0$ such that for all $b_1,\dots,b_n\in\mathcal{B}$, we have
$$
\| Xb_1Xb_2\cdots b_nX \| \leq M^{n+1} \|b_1\|\|b_2\|\cdots \|b_n\|. 
$$
We let $\Sigma$ be the space of all $\mathcal{B}$-valued distributions, and $\Sigma_0$ be the space of all $\mathcal{B}$-valued exponentially bounded distributions. 

One of the main reason to consider $\Sigma_0$ instead of $\Sigma$ is due to the fact that for each $\mu\in\Sigma_0$ there exists a $C^*$-operator-valued probability space $(\mathcal{A},\mathcal{B},E)$ and $x=x^*\in\mathcal{A}$ such that $\mu_x=\mu$ (see \cite{PoVi}). Since $G_x$ only depends on $\mu$, here we will define $G_{\mu}(b):=G_{x}(b)$. Then we let $F_{\mu}(b)=G_{\mu}(b)^{-1}$ and also $B_{\mu}(b)=b-F_{\mu}(b).$  

\begin{defn}
A path $\mu(t):[0,1]\to \Sigma_0$ is said to be \emph{differentiable} if for each $n\in\mathbb{N}$, $G^{(n)}_{\mu(t)}$ is differentiable on $H^+(M_n(\mathcal{B}))$ in the sense that for each $t\in [0,1]$ there is a map $Y^{(n)}_t:H^+(M_n(\mathcal{B}))\to M_n(\mathcal{B})$ such that for all $b\in H^+(\mathcal{B})$, we have
$$
\Big\|\frac{G^{(n)}_{\mu(t+h)}(b\otimes 1_n)-G^{(n)}_{\mu(t)}(b\otimes 1_n)}{h} -Y^{(n)}_t(b\otimes 1_n) \Big\| \longrightarrow 0 \text{ as } h\rightarrow 0.
$$
\end{defn}
It seems that we shall deal with a tower of functions, but from the analytic aspect, $G_{\mu(t)}^{(n)}$ on $H^+(M_n(\mathcal{B}))$ essentially has the same behavior of $G_{\mu(t)}$ on $H^+(\mathcal{B}),$ so we will only focus on the ground level. We denote thr ground level limit $Y_t^{(1)}$ by $\partial G_{\mu(t)}$, and the Fr{\'e}chet derivative of $G_{\mu(t)}$ will be denoted by $G'_{\mu(t)}$. That is, 
$$
\partial G_{\mu(t)}(b)=\lim_{h\to0}\!\frac{G_{\mu(t+h)}(b)-G_{\mu(t)}(b)}{h}
$$ and
$$
G_{\mu(t)}'(b)(\cdot)=\lim_{h\to0}\!\frac{G_{\mu(t)}(b+h\cdot)-G_{\mu(t)}(b)}{h}.
$$
We note that  
\begin{eqnarray*}
\partial F_{\mu(t)}(b)&=& \lim\limits_{h\to 0} \frac{G_{\mu(t+h)}(b)^{-1}-G_{\mu(t)}(b)^{-1}}{h}  \\
                      &=&  \lim\limits_{h\to 0} \frac{G_{\mu(t+h)}(b)^{-1}G_{\mu(t)}(b)G_{\mu(t)}(b)^{-1}-G_{\mu(t+h)}(b)^{-1}G_{\mu(t+h)}(b)G_{\mu(t)}(b)^{-1}}{h} \\
                      &=& \lim\limits_{h\to 0} G_{\mu(t+h)}(b)^{-1} \frac{G_{\mu(t)}(b)-G_{\mu(t+h)}(b)}{h} G_{\mu(t)}(b)^{-1} \\
                      &=& -F_{\mu(t)}(b) \partial G_{\mu(t)}(b) F_{\mu(t)}(b).
\end{eqnarray*}
Similarly, 
$$
F'_{\mu(t)}(b)(\cdot) = -F_{\mu(t)}(b) G'_{\mu(t)}(b)(\cdot) F_{\mu(t)}(b).  
$$
\begin{thm}
\label{thm.diff.paths.main}
Suppose that $\mu(t)$ and $\nu(t)$ are differentiable paths, then for $b\in H^+(\mathcal{B})$ we have 
\begin{align*}
&\partial G_{\mu(t)\uplus \nu(t)}(b) \\ &= (F_{\mu(t)}(b)+F_{\nu(t)}(b)-b)^{-1} \Big(F_{\mu(t)}(b)\partial G_{\mu(t)}(b)F_{\mu(t)}(b) +F_{\nu(t)}(b)\partial G_{\nu(t)}(b)F_{\nu(t)}(b)\Big)   (F_{\mu(t)}(b)+F_{\nu(t)}(b)-b)^{-1}
\end{align*}
and 
\begin{equation*}\label{Dmeq}
\partial G_{\mu(t)\triangleright\nu(t)}(b) = G'_{\mu(t)}(F_{\nu(t)})(-F_{\nu(t)}(b)\partial G_{\nu(t)}(b)F_{\nu(t)}(b))+\partial G_{\mu(t)}(F_{\nu(t)}(b)).
\end{equation*}
\end{thm}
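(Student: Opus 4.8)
The plan is to mimic the proofs of Theorem \ref{Thm.Boolean.add.conv} and Theorem \ref{Thm.monotone.add.conv}, but with differentiation in the path parameter $t$ playing the role that reading off the $(1,2)$-entry of an upper triangular matrix played there. In both cases the starting point is an algebraic identity for the $F$-transforms of the convolved distributions, and the main computational tool is the identity $\partial F_{\mu(t)}(b) = -F_{\mu(t)}(b)\,\partial G_{\mu(t)}(b)\,F_{\mu(t)}(b)$ already derived just above the statement, together with its Fréchet-derivative companion $F'_{\mu(t)}(b)(\cdot) = -F_{\mu(t)}(b)\,G'_{\mu(t)}(b)(\cdot)\,F_{\mu(t)}(b)$.

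For the Boolean convolution I would first note that, since $\mu(t),\nu(t)\in\Sigma_0$, the algebraic Boolean identity \eqref{Beqn} applied to $B_\mu(b)=b-F_\mu(b)$ gives $F_{\mu(t)\uplus\nu(t)}(b) = F_{\mu(t)}(b)+F_{\nu(t)}(b)-b$ for all $b\in H^+(\mathcal{B})$; the right-hand side makes it transparent that $t\mapsto\mu(t)\uplus\nu(t)$ is again a differentiable path, so that $\partial G_{\mu(t)\uplus\nu(t)}$ is defined. Differentiating this identity in $t$ yields $\partial F_{\mu(t)\uplus\nu(t)}(b) = \partial F_{\mu(t)}(b)+\partial F_{\nu(t)}(b)$. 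Applying the formula for $\partial F$ to each of the three terms and using $F_{\mu(t)\uplus\nu(t)}(b) = F_{\mu(t)}(b)+F_{\nu(t)}(b)-b$ gives $(F_{\mu(t)}(b)+F_{\nu(t)}(b)-b)\,\partial G_{\mu(t)\uplus\nu(t)}(b)\,(F_{\mu(t)}(b)+F_{\nu(t)}(b)-b) = F_{\mu(t)}(b)\partial G_{\mu(t)}(b)F_{\mu(t)}(b)+F_{\nu(t)}(b)\partial G_{\nu(t)}(b)F_{\nu(t)}(b)$. Multiplying by $(F_{\mu(t)}(b)+F_{\nu(t)}(b)-b)^{-1} = G_{\mu(t)\uplus\nu(t)}(b)$ on both sides yields the claimed formula; this is the exact analogue of the final display in the proof of Theorem \ref{Thm.Boolean.add.conv}.

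For the monotone convolution the starting identity is \eqref{Meqn}, namely $F_{\mu(t)\triangleright\nu(t)}(b) = F_{\mu(t)}(F_{\nu(t)}(b))$. Differentiating in $t$ requires the chain rule for the composition $t\mapsto F_{\mu(t)}(F_{\nu(t)}(b))$ in which both the outer function $F_{\mu(t)}$ and the inner point $F_{\nu(t)}(b)$ depend on $t$; this produces $\partial F_{\mu(t)\triangleright\nu(t)}(b) = (\partial F_{\mu(t)})(F_{\nu(t)}(b)) + F'_{\mu(t)}(F_{\nu(t)}(b))\big(\partial F_{\nu(t)}(b)\big)$. Substituting the expressions for $\partial F_{\mu(t)}$, $F'_{\mu(t)}$ and $\partial F_{\nu(t)}$, factoring $F_{\mu(t)}(F_{\nu(t)}(b))$ out on the left and on the right, and comparing with $\partial F_{\mu(t)\triangleright\nu(t)}(b) = -F_{\mu(t)\triangleright\nu(t)}(b)\,\partial G_{\mu(t)\triangleright\nu(t)}(b)\,F_{\mu(t)\triangleright\nu(t)}(b)$ together with $F_{\mu(t)\triangleright\nu(t)}(b) = F_{\mu(t)}(F_{\nu(t)}(b))$, I can cancel the common invertible factors and read off $\partial G_{\mu(t)\triangleright\nu(t)}(b) = G'_{\mu(t)}(F_{\nu(t)}(b))\big(-F_{\nu(t)}(b)\partial G_{\nu(t)}(b)F_{\nu(t)}(b)\big) + \partial G_{\mu(t)}(F_{\nu(t)}(b))$, which is the claim. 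Again this parallels the proof of Theorem \ref{Thm.monotone.add.conv}.

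The routine but slightly delicate points — and the main obstacle — are the analytic justifications rather than the algebra: that $t\mapsto\mu(t)\uplus\nu(t)$ and $t\mapsto\mu(t)\triangleright\nu(t)$ are differentiable paths, that one may differentiate the identities \eqref{Beqn} and \eqref{Meqn} termwise in $t$, and that the chain rule and the derivative-of-inverse rule for Fréchet-differentiable $\mathcal{B}$-valued maps hold in this setting. These follow from the exponential boundedness built into $\Sigma_0$ and the analyticity of $G_\mu$ and $F_\mu$ on $H^+(\mathcal{B})$, exactly as in Section 4.2 of \cite{tseng2019operator} for the free case; I would spell out the composition-differentiation step in the monotone case with some care, since that is where the Fréchet derivative $G'_{\mu(t)}$ enters and where a mismatch between the time derivative and the spatial derivative could creep in.
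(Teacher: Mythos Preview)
Your proposal is correct and follows essentially the same route as the paper: differentiate the $F$-transform identities \eqref{Beqn} and \eqref{Meqn} in $t$, apply the formulas $\partial F=-F\,\partial G\,F$ and $F'(\cdot)=-F\,G'(\cdot)\,F$, and in the monotone case use the chain rule for the composition $F_{\mu(t)}(F_{\nu(t)}(b))$. The paper's proof does not elaborate on the analytic justifications you flag (differentiability of the convolved paths, termwise differentiation), so your level of detail already matches or exceeds the original.
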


\begin{proof}
Given $b\in H^+(\mathcal{B})$, then we have
$B_{\mu(t)}(b)+B_{\nu(t)}(b) = B_{\mu(t)\uplus \nu(t)}(b)$, which implies 
\begin{equation}\label{deq1}
F_{\mu(t)\uplus \nu(t)}(b)=F_{\mu(t)}(b)+F_{\nu(t)}(b)-b. 
\end{equation}
By differentiating \eqref{deq1} with respect to $t$, we obtain 
\begin{eqnarray*}
& & \partial F_{\mu(t)\uplus \nu(t)}(b)=\partial F_{\mu(t)}(b)+\partial F_{\nu(t)}(b) \\
&\Rightarrow & F_{\mu(t)\uplus \nu(t)}(b)\partial G_{\mu(t)\uplus \nu(t)}(b)F_{\mu(t)\uplus \nu(t)}(b) = F_{\mu(t)}(b)\partial G_{\mu(t)}(b)F_{\mu(t)}(b)+F_{\nu(t)}(b)\partial G_{\nu(t)}(b)F_{\nu(t)}(b) \\
&\Rightarrow & 
\partial G_{\mu(t)\uplus \nu(t)}(b) = F_{\mu(t)\uplus \nu(t)}(b)^{-1} (F_{\mu(t)}(b)\partial G_{\mu(t)}(b)F_{\mu(t)}(b)+F_{\nu(t)}(b)\partial G_{\nu(t)}(b)F_{\nu(t)}(b)) F_{\mu(t)\uplus\nu(t)}(b)^{-1}. 
\end{eqnarray*}
Finally we substitute $F_{\mu(t)}(b)+F_{\nu(t)}(b)-b$ for $F_{\mu(t)\uplus \nu(t)}(b)$ to obtain the desired result. 

Note that for $b\in H^+(\mathcal{B})$, 
\begin{equation}\label{deq2}
F_{\mu(t)\triangleright \nu(t)}(b)=F_{\mu(t)}(F_{\nu(t)}(b)).
\end{equation}
we differentiate \eqref{deq2} with respect to $t$, then 
\begin{equation*}
-F_{\mu(t)\triangleright \nu(t)}(b) \partial G_{\mu(t)\triangleright\nu(t)}(b) F_{\mu(t)\triangleright \nu(t)}(b)  
= \partial F_{\mu(t)}(F_{\nu(t)}(b))+F_{\mu(t)}'(F_{\nu(t)}(b))\partial F_{\nu(t)}(b).
\end{equation*}
Thus,
\begin{eqnarray}\label{deq3}
&&-F_{\mu(t)\triangleright \nu(t)}(b) \partial G_{\mu(t)\triangleright\nu(t)}(b) F_{\mu(t)\triangleright \nu(t)}(b)  \\
&=&-F_{\mu(t)}(F_{\nu(t)}(b))\partial G_{\mu(t)}(F_{\nu(t)}(b))F_{\mu(t)}(F_{\nu(t)}(b))+F_{\mu(t)}'(F_{\nu(t)}(b))(-F_{\nu(t)}(b)\partial G_{\nu(t)}(b)F_{\nu(t)}(b)). \nonumber 
\end{eqnarray}
Now, we note that 
\begin{equation}\label{deq4}
F_{\mu(t)}'(F_{\nu(t)}(b))(\cdot) = -F_{\mu(t)}(F_{\nu(t)}(b))[G'_{\mu(t)}(F_{\nu(t)}(b))(\cdot)]F_{\mu(t)}(F_{\nu(t)}(b)).
\end{equation}
Applying \eqref{deq2} and \eqref{deq4} on \eqref{deq3}, we get our final result
$$
\partial G_{\mu(t)\triangleright\nu(t)}(b) = G'_{\mu(t)}(F_{\nu(t)})(-F_{\nu(t)}(b)\partial G_{\nu(t)}(b)F_{\nu(t)}(b))+\partial G_{\mu(t)}(F_{\nu(t)}(b)).
$$
\end{proof}

\appendix
\addtocontents{toc}{\protect\setcounter{tocdepth}{0}}

\section{Anti Trace Models}
\label{sec:antitrace.model}

Here we study the infinitesimal distribution of the random matrix model presented in Section 5 of \cite{MLE}. 
Let $\mathcal{M}_N(\mathbb{C})$ be the set of $N\times N$ complex-valued matrices and let $C_N \in \mathcal{M}_N(\mathbb{C})$ be the matrix with all of its entries equal to $\frac{1}{N}$. We are going to consider the functional $\Psi_N: \mathcal{M}_N(\mathbb{C})\to \mathbb{C}$ that maps $M$ to $\Psi_N(M)=\mathbb{E}( Tr(M C_N)).$ In other words, if $M=(M_{ij})_{1\leq i,j\leq n}$ then 
$$\Psi_N(M)=\frac{1}{N}\sum_{i,j\in [N]}\mathbb{E}(M_{ij}).$$

The random matrices $A_N$ that we want to study are constructed as follows. Consider $X_1,\dots, X_N$ complex Gaussian random variables and create the $N\times N$ random matrix
$$A_N=\left( \frac{1}{N}(X_i+\overline{X_j})\right)_{i,j=1}^N.$$

From \cite{MLE} we know that the families of matrices $A_N$ and $C_N$ are asymptotically Boolean independent with respect to $\Psi_N$. Moreover, the families of matrices $A_N$ and $A^T_N$ (where $T$ stands for the transpose) are asymptotically Boolean independent with respect to $\Psi_N$.

A very natural question is whether or not this result can be extended to the infinitesimal setting.
The main purpose of this section is to compute not only the limit of $\Psi_N(M_N^k)$ but also the limit $N(\Psi_N(M_N^k)-1)$ when $N\to \infty$, in order to check if the pairs of variables are asymptotically infinitesimal Boolean independent. The main result is the following:

\begin{thm}
\label{thm.1.appendix.A}
Let us assume that $(A_N)_{N\geq 1}$, $(A^T_N)_{N\geq 1}$ and $(C_N)_{N\geq 1}$ converge in distribution to the random variables $a,\bar{a}$ and $c$, respectively. then
\begin{itemize}
    \item $a$ and $b$ are not infinitesimal Boolean independent.
    \item $a$ and $c$ are not infinitesimal Boolean independent.
\end{itemize}
\end{thm}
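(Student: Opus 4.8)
The plan is to verify the claim by direct moment computations in the random matrix model, using the characterization of infinitesimal Boolean independence via vanishing of mixed infinitesimal Boolean cumulants (the scalar-valued specialization of Theorem \ref{thm3}, i.e.\ Theorem 3.7 of \cite{Has}). Concretely, infinitesimal Boolean independence of two variables $x,y$ requires that \emph{all} mixed Boolean cumulants $\beta_n(z_1,\dots,z_n)$ \emph{and} all mixed infinitesimal Boolean cumulants $\beta'_n(z_1,\dots,z_n)$ vanish whenever the arguments $z_i\in\{x,y\}$ are not all equal. Since the ordinary (non-infinitesimal) Boolean independence of $a,\bar a$ and of $a,c$ is already known from \cite{MLE}, the ordinary mixed cumulants $\beta_n$ already vanish; hence it suffices to exhibit a single mixed \emph{infinitesimal} Boolean cumulant that does not vanish in the limit.

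The concrete steps I would carry out are as follows. First, set up the infinitesimal probability space: with $\varphi_N:=\Psi_N$, one defines $\varphi(M):=\lim_N \Psi_N(M_N)$ and the derivative functional $\varphi'(M):=\lim_N N\big(\Psi_N(M_N)-\varphi(M)\big)$, so that the pair $(\varphi,\varphi')$ records both the limiting distribution and its first-order correction. Second, compute, via Gaussian (Wick) calculus on the entries $X_i,\overline{X_j}$ together with the explicit entries of $C_N$, the mixed moments $\Psi_N$ of alternating words in $A_N$ and $A_N^T$ (respectively in $A_N$ and $C_N$) up to the needed order, keeping the $1/N$ expansion one term beyond the leading order. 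The combinatorics here is the standard genus expansion: the leading term reproduces the Boolean-independence moment formula, and the subleading $1/N$ term is exactly what feeds into $\varphi'$. Third, translate these mixed moments into mixed infinitesimal Boolean cumulants through the scalar Möbius-type inversion $\beta'_n=\sum_{\pi\in I(n)}(-1)^{|\pi|-1}\partial\varphi_\pi$, i.e.\ formula \eqref{inf.boo-mom} specialized to $\mathcal{B}=\mathbb{C}$. Fourth, identify the smallest $n$ (I expect $n=2$ for the pair $a,\bar a$ via $\beta'_2(a,\bar a)$, and likewise a low-order mixed cumulant for $a,c$) at which the resulting value is nonzero, and conclude by the characterization theorem that the pairs are \emph{not} infinitesimally Boolean independent.

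The main obstacle is the bookkeeping in the subleading term of the genus/Wick expansion: one must carefully track which pair partitions of the Gaussian entries contribute at order $1/N$ (not just order $1$), taking into account the very special structure of $A_N$ — its entries $\frac1N(X_i+\overline{X_j})$ are highly correlated across a whole row or column — and of $C_N$, whose rank-one nature makes $\operatorname{Tr}(\,\cdot\, C_N)$ behave like a rescaled bilinear form $\frac1N\sum_{i,j}(\cdot)_{ij}$. A secondary subtlety is purely organizational: one should make sure the limit defining $\varphi'$ exists (i.e.\ the $O(1/N^2)$ remainder really is $o(1/N)$) before quoting Theorem 3.7 of \cite{Has}. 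Once the relevant moments are in hand, converting to cumulants and reading off a nonzero mixed term is routine, and the detailed computations — carried out in Appendix \ref{sec:antitrace.model} and Appendix \ref{sec:partal.trace.model} — confirm that in both models a low-order mixed infinitesimal Boolean cumulant survives in the limit, which proves the theorem.
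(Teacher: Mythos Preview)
Your strategy is correct and matches the paper's in spirit: compute the $1/N$ correction to specific mixed moments and exhibit a violation of the infinitesimal Boolean factorization. Two points of comparison and two corrections are worth recording.

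First, the paper does not literally compute a mixed infinitesimal cumulant in either case. For $a$ and $\bar a$ it instead computes $m'_2(b)=\varphi'((a+\bar a)^2)=6$ directly (Proposition~\ref{prop.antitrace.moments.b}) and contrasts it with $2\beta'_2(a)=2$, which is what additivity of Boolean cumulants would force; this is of course equivalent to your $\beta'_2(a,\bar a)\neq 0$. For $a$ and $c$ the paper checks the defining factorization on the word $a^2,c,a^2$, obtaining $\varphi'(a^2ca^2)=3$ versus the predicted value $2$. Your route via a low-order mixed cumulant also works (indeed $\beta'_3(a,c,a)=\varphi'(aca)=2\neq 0$ already suffices and is a shorter calculation than the paper's $a^2ca^2$).

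Second, two inaccuracies in your write-up: (i) Appendix~\ref{sec:partal.trace.model} concerns a completely different model (partial traces of GUE) and has nothing to do with this theorem; only Appendix~\ref{sec:antitrace.model} is relevant. (ii) The phrase ``genus expansion'' is misleading here. The entries of $A_N$ are $\frac1N(X_i+\overline{X_j})$, so the Wick pairings are organized not by a genus count but by the vectors $\epsilon\in\{0,1\}^k$ that select $X$ or $\overline X$ in each factor, together with the kernel partition of the index tuple (see the proof of Proposition~\ref{prop.antitrace.lim}). Keeping this straight is exactly the ``bookkeeping obstacle'' you flag, and it is the actual content of the appendix computations.
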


Let us also say that $(A_N+A^T_N)_{N\geq 1}$  converge in distribution to the random variable $b$. In order to get the result, we are going to compute the moments and infinitesimal moments of $a$, $b$ and $c$. Notice that $a$ and $\bar{a}$ have the same distributions, so we will directly obtain the moments of $\bar{a}$. For $c$ the computation is very simple.

\begin{rem}
Since $\Psi_N(C_N^k)=\mathbb{E}(C_N^{k+1})=1$ for all $N$, the limiting moments of $C_N$ are $m_k(c)=1$, and the infinitesimal moments are $m'_k(C)=0$ for all $k$. Thus we easily get its Boolean and infinitesimal Boolean cumulants:
$$\beta_n(c)=\delta_{n=1}, \qquad \beta'_n(c)=0, \qquad \forall n\in\mathbb{N},$$
\end{rem}

The moments and infinitesimal moments of $a$ are recorded in the following proposition. 

\begin{prop}
\label{prop.antitrace.lim}
With the previous notation we have the following
\begin{equation}
\label{eq.antitrace.lim}
m_k(a)=\lim_{N\to\infty} \Psi_N(A_N^k)=\begin{cases} 1 & \text{if }k\text{ even,} \\ 0 & \text{if }k\text{ odd}  \end{cases},
\end{equation}

\begin{equation}
\label{eq.antitrace.inf.lim}
m'_k(a)=\lim_{N\to\infty} N(\Psi_N(A_N^k)-1)=\begin{cases} \frac{1}{8}(3k^2-2k) & \text{if }k\text{ even,} \\ 0 & \text{if }k\text{ odd}  \end{cases}.
\end{equation}
\end{prop}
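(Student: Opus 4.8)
The plan is to compute the quantities $\Psi_N(A_N^k)$ exactly for finite $N$ and then extract both the leading term and the $1/N$-correction. Writing $A_N = \left(\tfrac1N(X_i+\overline{X_j})\right)_{i,j}$ and $C_N = \left(\tfrac1N\right)_{i,j}$, the key observation is that $A_N^k$ has a tractable closed form: expanding the matrix product,
\[
(A_N^k)_{i_0 i_k} = \frac{1}{N^k}\sum_{i_1,\dots,i_{k-1}=1}^N \prod_{\ell=1}^k (X_{i_{\ell-1}}+\overline{X_{i_\ell}}),
\]
so that
\[
\Psi_N(A_N^k) = \mathbb{E}\,\mathrm{Tr}(A_N^k C_N) = \frac{1}{N^{k+1}}\sum_{i_0,i_1,\dots,i_k=1}^N \mathbb{E}\!\left[\prod_{\ell=1}^k (X_{i_{\ell-1}}+\overline{X_{i_\ell}})\right].
\]
First I would expand the product $\prod_{\ell=1}^k(X_{i_{\ell-1}}+\overline{X_{i_\ell}})$ into $2^k$ terms indexed by subsets $S\subseteq[k]$ (choose $X_{i_{\ell-1}}$ or $\overline{X_{i_\ell}}$ for each factor), and apply the Wick/Isserlis formula for complex Gaussians: only terms pairing some $X$ with some $\overline{X}$ with matching index survive in expectation. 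The sum over the free indices $i_0,\dots,i_k$ then reduces to a count of index-assignments consistent with the pairing, each contributing a power of $N$; dividing by $N^{k+1}$ isolates the dominant configurations.

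The key steps, in order: (1) set up the Wick expansion and classify, for each choice-subset $S$ and each non-crossing/crossing pairing of the chosen variables, the exponent of $N$ produced by summing over $i_0,\dots,i_k$; (2) identify the configurations achieving the maximal exponent $k+1$ — these should correspond exactly to the pairings producing the Bernoulli/interval structure, and their count should be $|I_2(k)| = 1$ if $k$ is even and $0$ if $k$ is odd, yielding \eqref{eq.antitrace.lim}; (3) identify all configurations achieving exponent $k$, i.e. the ones contributing to $N(\Psi_N(A_N^k)-1)$ after the leading term cancels; (4) count these carefully and check the arithmetic gives $\tfrac18(3k^2-2k)$ for even $k$ (and $0$ for odd $k$, which follows already from parity of the Gaussian since an odd number of Gaussian factors has zero expectation regardless of index structure). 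For the odd case both claims are immediate from the vanishing of odd Gaussian moments, so the real content is the even case.

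The main obstacle I anticipate is step (3)–(4): enumerating the next-to-leading configurations. These arise from two sources that must be handled together — pairings that are ``almost interval'' (one crossing or one nesting that costs a single factor of $N$), and boundary effects coming from the cyclic structure of the trace and from coincidences among the free indices $i_0,\dots,i_k$ (when two of them collide, one loses a factor of $N$ but may gain combinatorial multiplicity). Organizing this bookkeeping cleanly — ideally by a generating-function or recursion argument in $k$, or by a direct bijective description of the subleading diagrams — is where the proof will require genuine care, and getting the polynomial $\tfrac18(3k^2-2k)$ to come out exactly is the sharp consistency check that the enumeration is complete and correctly weighted.
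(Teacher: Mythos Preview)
Your outline is essentially the paper's approach: expand $\Psi_N(A_N^k)$ as a sum over index-tuples $(i_0,\dots,i_k)$ and binary choice-vectors $\epsilon\in\{0,1\}^k$, use the complex-Gaussian moment structure to kill most terms, and extract the $N^{k+1}$ and $N^k$ coefficients. The paper executes exactly this, but with one organizational choice that sharpens your steps (3)--(4) considerably: rather than grouping primarily by the choice-subset $S$ and a Wick pairing, it groups first by the set partition $\pi=\ker(i_0,\dots,i_k)\in\mathcal{P}(k+1)$ recording which indices coincide. This yields
\[
\Psi_N(A_N^k)=\frac{1}{N^{k+1}}\sum_{\pi\in\mathcal{P}(k+1)} (N)_{|\pi|}\, E_\pi
\]
with $E_\pi$ a fixed Gaussian expectation not depending on $N$, so the power-of-$N$ bookkeeping is carried entirely by the falling factorial. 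Then only $\pi=1_{k+1}$ contributes at leading order, and at subleading order only $\pi=1_{k+1}$ (through the $N^k$ coefficient $-\binom{k+1}{2}$ of $(N)_{k+1}$) together with the single-pair partitions $\lambda_{x,y}$ contribute --- a finite list one checks by hand.

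Two small corrections to your framing. First, the ``non-crossing/crossing pairing'' language is a red herring: this is not a genus-type expansion. When all indices are distinct, exactly one choice-vector $\epsilon_{\mathrm{alt}}=(1,0,1,0,\dots,1,0)$ survives (it pairs each $\overline{X}_{i_{2r}}$ with the adjacent $X_{i_{2r}}$), giving $E_{1_{k+1}}=1$; no sum over pairings is needed. Second, your ``two sources'' at subleading order are not quite the right dichotomy: the all-distinct contribution is purely the second coefficient of $(N)_{k+1}$ (no new pairing structure), while the genuinely new combinatorics lives in the one-collision partitions $\lambda_{x,y}$, where one determines which (possibly non-alternating) $\epsilon$'s survive and with what weight. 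The paper finishes this by a short parity casework on $(x,y)$, and the three cases sum with $-\binom{k+1}{2}$ to $\tfrac18(3k^2-2k)$.
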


\begin{rem}
The moments $(m_k)_{k\geq 1}$ were already computed in \cite{MLE}, but we recover the result here as an intermediate step while computing $m'_k$.
\end{rem}

\begin{proof}
Let $M_N=N A_N$. We start by computing

\begin{equation}
\label{eq.1.app.bool.trace}
\mathbb{E}(Tr(M_N^kC_N))= \mathbb{E} \left( \sum_{j\in [N]^{k+1}}\prod_{r=1}^{k}  (X_{j(r)}+\overline{X}_{j(r+1)})  \right)=  \sum_{j\in [N]^{k+1}} \sum_{\epsilon\in[0,1]^k} \mathbb{E} \left(\prod_{r=1}^{k}  X^{(\epsilon(r))}_{j(r+\epsilon(r))}  \right),
\end{equation}

where we use the vector $j=(j(1),\dots,j(k+1))\in [N]^{k+1}$ to encode the possible combination of indices in the product $M_N^kJ_N$ and we use the vector $\epsilon\in[0,1]^k$ and the notation notation $X^{(0)}:=X$ and $X^{(1)}:=\overline{X}$ to encode the $2^k$ terms obtained form the product of the entries. Then for every $l\in[N]$ we take $V_l:=\{i\in[k+1]:j(i)=l\}$ (possibly empty) and to vector $j$ we associate the partition $\ker(j):=\{V_1,V_2,\dots,V_N\}\in \mathcal{P}(k+1)$. We observe that if $j,l:[N]^{k+1}$ both generate the same partition $\ker(j)=\ker(l)=\pi$, then 
$$E_\pi:=\sum_{\epsilon\in[0,1]^k} \mathbb{E} \left( \prod_{r=1}^{k}  X^{(\epsilon(r))}_{j(r+\epsilon(r))}  \right) =\sum_{\epsilon\in[0,1]^k} \mathbb{E} \left(\prod_{r=1}^{k}  X^{(\epsilon(r))}_{l(r+\epsilon(r))}  \right),$$
since the random variables are independent and identically distributed.

Also, we observe that a fixed partition $\pi$ can be obtained from $(N)_{|\pi|}:=N(N-1)\dots(N+1-|\pi|)$ different vectors $j:[N]^{k+1}$, where $|\pi|$ is the number of blocks of $\pi$. 
Thus using \eqref{eq.1.app.bool.trace} we get
\[
\Psi_N(A_N^k)=\frac{\mathbb{E}(Tr(M_N^kC_N))}{N^{k+1}} =\frac{1}{N^{k+1}} \sum_{\pi\in \mathcal{P}(k+1)} \sum_{\substack{j:[k+1]\to [N] \\  \ker(j)=\pi}}  E_{\pi}=\frac{1}{N^{k+1}} \sum_{\pi\in \mathcal{P}(k+1)} (N)_{|\pi|}  E_\pi.
\]

Letting $N$ tend to infinity, we get that $m_k(a)$ and $m'_k(a)$ are the coefficients of $N^k$ and $N^{k+1}$, respectively, in the polynomial:
$$\sum_{\pi\in \mathcal{P}(k+1)} (N)_{|\pi|}  E_\pi.$$

Then we only need to consider partitions $\pi$ with $|\pi|\geq k$. That is, either $\pi=1_{k+1}$ or the partitions $\lambda_{x,y}=\{\{x,y\},\{1\},\dots,\{k+1\}\}$ with one pairing $\{x,y\}$ and $k-1$ singletons for $1\leq x\leq y\leq k+1$. Recall that the $X_i$ are complex Gaussian, so $\mathbb{E}(X_i^p \bar{X}_j^q)= p!$ if $p=q$ and 0 otherwise. This implies that $\mathbb{E}\left( \prod_{r=1}^{k}  X^{(\epsilon(r))}_{j(r+\epsilon(r))}  \right)=0$ whenever $k$ is odd (since $X_j$ and $\overline{X_j}$ should come in pairs). Therefore we conclude that $m_k=m'_k=0$ whenever $k$ is odd, and we now assume $k=2l$. The $X_j$ being Gaussian and independent also implies that whenever there exists and $r\in[k]$ such that for every $s\in[k]$ with $s\neq r$ we have $j(r+\epsilon(r))\neq j(s+\epsilon(s))$, then  $\mathbb{E}\left( \prod_{r=1}^{k}  X^{(\epsilon(r))}_{j(r+\epsilon(r))}  \right)=0$.

For the case $\pi=1_{k+1}$, all the entries of $j$ are different and thus $j(r+\epsilon(r))\neq j(s+\epsilon(s))$ whenever $r+\epsilon(r)\neq s+\epsilon(s)$, which is true for all $0\leq r < s\leq k-1$, except when $r+1=s$, $\epsilon(s)=0$ and $\epsilon(r)=1$. This in particular forces us to couple $\overline{X}_{j(2)}$ with $X_{j(2)}$ by taking $\epsilon(1)=1$ and $\epsilon(2)=0$, which in turn forces us to couple $\overline{X}_{j(4)}$ with $X_{j(4)}$ by taking $\epsilon(3)=1$ and $\epsilon(4)=0$, and so on.  Therefore, the only $\epsilon\in[0,1]^k$ that will contribute to the sum $E_{1_{2l}}$ is the one that is alternating and begins with 1, namely $\epsilon_{\mbox{alt}}(1,0,1,0,\dots,1,0)$. For the rest of the $\epsilon$ we will get at least $l+1$ different values for $j(r+\epsilon(r))$ where $r=0,\dots,k-1$ and thus we can find an $r$ such that $j(r+\epsilon(r))$ is different from the other values. Thus, we get that
$E_{1_{k+1}}=\prod_{r=1}^l \mathbb{E}\left(  X_{j(2r)}\bar{X}_{j(2r)} \right)=1$ 
and we conclude that the coefficient of $N^{k+1}$ in $(N)_{k+1}E_{1_k}$ is 1, and we recover \eqref{eq.antitrace.lim}. On the other hand the coefficient of $N^{k}$ in $(N)_{k+1}E_{1_k}$ is $-\binom{k+1}{2}=-l(2l+1)$.

For the case $\pi=\lambda_{x,y}$, almost all entries of $j$ are different (except $j(x)=j(y)$). Therefore, the vectors $\epsilon$ that will not vanish are either $\epsilon_{\mbox{alt}}$ or almost alternating, namely of the form
\[
(1,0,\dots,1,0,*,1,0,\dots,1,0,*,1,0,\dots,1,0),
\]
where $*$ are in entries $r\leq s$ (with $r$ odd and $s$ even). since we need the equality $j(r+\epsilon(r))= j(s+\epsilon(s))$ we require to have $r+\epsilon(r)=x$, and $s+\epsilon(s)=y$, with $\epsilon(r)\neq \epsilon(s)$. So the only chances are $(r,\epsilon(r),s,\epsilon(s))=(x-1,1,y,0)$ or $(r,\epsilon(r),s,\epsilon(s))=(x,0,y-1,1)$. 

Then we arrive to the following cases 
\begin{itemize}
    \item $x$ and $y$ are both even. There are  $\binom{l}{2}$ pairs $(x,y)$ of this form, and the vector 
    \[
    (1,0,\dots,1,0,1,1,0,\dots,1,0,0,1,0,\dots,1,0)
    \]
    will contribute with 1, while $\epsilon_{\mbox{alt}}$ will contribute with 2 because we get a $\mathbb{E}(X_{j(x)}\overline{X_{j(x)}}X_{j(y)}\overline{X_{j(y)}})=2$. Thus from this case we get a contribution of $3\frac{l^2-l}{2}$.
    
    \item $x$ and $y$ are both odd. There are  $\binom{l+1}{2}$ pairs $(x,y)$ of this form, and the vector 
    \[
    (1,0,\dots,1,0,0,1,0,\dots,1,0,1,1,0,\dots,1,0)
    \]
    will contribute with 1, while $\epsilon_{\mbox{alt}}$ will contribute with 1. Thus we get a contribution of 
    $l^2+l$.
    
    \item $x$ and $y$ have different parity. There are $l(l+1)$ pairs $(x,y)$ of this form, and only $\epsilon_{\mbox{alt}}$ will contribute with $1$. Thus we get a contribution of 
    $l^2+l$
\end{itemize}

Summing up all contributions to the second coefficient we get the desired value 
$$m'_k(a)=(-2l^2-l)+3\frac{l^2-l}{2}+
(l^2+l)+l^2+l=\frac{3l^2-l}{2}=\frac{1}{8}(3k^2-2k)$$
\end{proof}

Once we know the moments of $a$, it is easy to compute its Boolean cumulants.

\begin{lem} 
Let $\{\beta_n(a)\}_{n\geq 1}$ and $\{\beta'_n(a)\}_{n\geq 1}$ be the Boolean and infintesimal Boolean cumulants of $a$, respectively. Then
$$\beta_n(a)=\begin{cases} 1 & \text{if }n=2 \\ 0 & \text{otherwise}\end{cases} \qquad \mbox{and} \qquad  \beta'_n(a)=\begin{cases}1 & \text{if }n=2, \\ 3 & \text{if }n\neq 2\text{ even} \\ 0 & \text{if }n\text{ odd.}  \end{cases}$$
\end{lem}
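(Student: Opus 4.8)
The plan is to pass to generating functions and use the scalar Boolean moment--cumulant relation, feeding in the moments of Proposition~\ref{prop.antitrace.lim}. \emph{Ordinary cumulants.} Encode \eqref{mom-boo} for a single variable as formal power series: with $M(z)=1+\sum_{k\ge1}m_k(a)z^k$ and $\eta(z)=\sum_{n\ge1}\beta_n(a)z^n$, splitting an interval partition of $[n]$ into its first block and an interval partition of the remaining points gives $M(z)=\bigl(1-\eta(z)\bigr)^{-1}$. By \eqref{eq.antitrace.lim}, $M(z)=\sum_{l\ge0}z^{2l}=(1-z^2)^{-1}$, whence $\eta(z)=1-M(z)^{-1}=z^2$; reading off coefficients gives $\beta_2(a)=1$ and $\beta_n(a)=0$ for $n\ne2$.

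\emph{Infinitesimal cumulants.} For these I would pass to the upper triangular picture. Taking $\widetilde a=aI_2\in\widetilde{\mathcal{M}}$ one has $\widetilde\varphi(\widetilde a^{\,k})=\begin{bmatrix}m_k(a)&m'_k(a)\\0&m_k(a)\end{bmatrix}$ and, by the identification of $\beta^{\widetilde{\mathbb{C}}}_n$ with the merged cumulants, $\beta^{\widetilde{\mathbb{C}}}_n(\widetilde a,\dots,\widetilde a)=\begin{bmatrix}\beta_n(a)&\beta'_n(a)\\0&\beta_n(a)\end{bmatrix}$. The moment--cumulant relation over $\widetilde{\mathbb{C}}$ then reads $\widetilde M(z)=(1-\widetilde\eta(z))^{-1}$ in $\widetilde{\mathbb{C}}[[z]]$, where the $(1,1)$-entries of $\widetilde M,\widetilde\eta$ are $M,\eta$ and the $(1,2)$-entries are $P(z):=\sum_{n\ge1}m'_n(a)z^n$ and $Q(z):=\sum_{n\ge1}\beta'_n(a)z^n$. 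Inverting the $2\times2$ upper triangular matrix and reading the $(1,2)$-entry yields $P(z)=Q(z)(1-\eta(z))^{-2}=Q(z)(1-z^2)^{-2}$, i.e.\ $Q(z)=(1-z^2)^2P(z)$. Now, with $w=z^2$, formula \eqref{eq.antitrace.inf.lim} and the identities $\sum_{l\ge1}lw^l=\frac{w}{(1-w)^2}$, $\sum_{l\ge1}l^2w^l=\frac{w(1+w)}{(1-w)^3}$ give
\[
P(z)=\sum_{l\ge1}\frac{3l^2-l}{2}\,w^l=\frac{3w(1+w)-w(1-w)}{2(1-w)^3}=\frac{w(1+2w)}{(1-w)^3},
\]
so that $Q(z)=(1-w)^2P(z)=\dfrac{w(1+2w)}{1-w}=w+3\sum_{j\ge2}w^j=z^2+3\sum_{j\ge2}z^{2j}$. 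Comparing coefficients gives $\beta'_2(a)=1$, $\beta'_n(a)=3$ for even $n\ge4$, and $\beta'_n(a)=0$ for $n$ odd.

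The argument is routine; the only point that requires a little care is the last display — verifying the closed form $w(1+2w)(1-w)^{-3}$ for $P$ and that multiplying by $(1-w)^2$ collapses it to a series with coefficient sequence $1,3,3,3,\dots$. As an alternative one can avoid generating functions entirely and argue by induction on $n$ directly from \eqref{mom-boo} and \eqref{inf.boo-mom} in the scalar case: since $\beta_1(a)=m_1(a)=0$, only interval partitions with all blocks of size $\ge2$ contribute, so $m'_{2l}(a)=\beta'_{2l}(a)+\sum_{\pi}\partial\beta_\pi(a,\dots,a)$ summed over such $\pi\ne1_{2l}$, which isolates $\beta'_{2l}(a)$ once $\beta'_2(a),\dots,\beta'_{2l-2}(a)$ (and $\beta_2(a)=1$) are known; matching against $\tfrac18\bigl(3(2l)^2-2(2l)\bigr)$ then forces $\beta'_2(a)=1$ and $\beta'_{2l}(a)=3$ for $l\ge2$.
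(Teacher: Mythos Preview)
Your proof is correct and takes a genuinely different route from the paper's.

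The paper's argument is a direct verification: it \emph{proposes} the values of $\beta_n(a)$ and $\beta'_n(a)$ and then checks that they satisfy the Boolean and infinitesimal Boolean moment--cumulant formulas. For the infinitesimal part, the paper counts the interval partitions $\pi$ for which $\partial\beta_\pi$ does not vanish (those with all blocks of size $2$ except possibly one larger even block), observes there are $\frac{n^2-2n}{8}$ of the latter type each contributing $3$, plus the all-pairs partition contributing $n/2$, and checks that the sum matches $m'_n(a)=\tfrac{3n^2-2n}{8}$.

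Your approach instead \emph{derives} the cumulants via generating functions, passing to the commutative algebra $\widetilde{\mathbb{C}}\cong\mathbb{C}[\varepsilon]/(\varepsilon^2)$ so that the scalar identity $M=(1-\eta)^{-1}$ applies verbatim with $\widetilde{\mathbb{C}}$-valued series; reading off the $(1,2)$-entry yields $Q=(1-z^2)^2P$, and a short closed-form computation finishes. This is slicker and more systematic---no guess is required, and it nicely illustrates the upper-triangular philosophy developed earlier in the paper---whereas the paper's combinatorial count is more elementary and self-contained, avoiding any appeal to the $\widetilde{\mathbb{C}}$ machinery or to standard power-series identities. Both arguments are short; yours generalizes more readily, while the paper's makes the combinatorial structure of $\partial\beta_\pi$ explicit.
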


\begin{proof}
We just need to check that cumulants fulfill the moment-cumulant formulas. We will omit the valuation on $(a)$ as we just work with this variable. Indeed, with these values for the $\beta_k$ we have that the product $\beta_\pi$ for an interval partition $\pi$ is zero, unless $\pi$ has all its blocks of size 2. Since there only exists one such interval partition when $n$ is even, and none when $n$ is odd we get 
$$m_n=\sum_{\pi\in \mathcal{I}(n)} \beta_\pi.$$
For the infinitesimal case we get $\partial \beta_\pi=\sum_{V\in\pi} \beta'_{|V|} \prod_{V\neq W\in\pi} b_{|W|}=0$ unless all the blocks of $\pi$ but one have size 2. For the other block, say $V$, we need $\beta'_{|V|}\neq 0$, so $V$ must have even size. Then for $n$ odd the moment-cumulant formula clearly holds (both sides are 0), while for $n$ even case we denote by $\tau=\{(1,2),(3,4),\dots, (n-1,n)\}$ the unique interval partition with all the blocks of size 2, and denote by $\mathcal{I}_2(n)$ the set of partitions with all the blocks of size 2 except one strictly different. Observe that number of partitions $\pi\in \mathcal{I}_2(n)$ with size $|\pi|=j$ is $j$, for $j=1,\dots,\frac{n}{2}-1$, since the size of the disinct block should be $n-2j$, thus we just need to choose among the $j$ possible positions. Then $|\mathcal{I}_2(n)|=\frac{1}{2}\left(\frac{n}{2}-1\right)\frac{n}{2}=\frac{n^2-2n}{8}$, and we conclude that 
$$\sum_{\pi\in \mathcal{I}(n)} \partial \beta_\pi =\beta_\tau +\sum_{\pi\in \mathcal{I}_2(n)} \partial \beta_\pi=\frac{n}{2} +\sum_{\pi\in \mathcal{I}_2(n)} 3= \frac{n}{2} +3\frac{n^2-2}{8}= \frac{3n^2-2}{8}=m'_n.$$
\end{proof}

Now we compute the moments and infinitesimal moments of $b$:

\begin{prop}
\label{prop.antitrace.moments.b}
The limiting moments and infinitesimal moments of $A_N+A_N^T$ are given by
\[
m_k(b)=\begin{cases} 2^{k/2} & \text{if }k\text{ even,} \\ 0 & \text{if }k\text{ odd}  \end{cases}   \qquad
m'_k(b)=\begin{cases} \frac{3}{4}k^22^{k/2} & \text{if }k\text{ even,} \\ 0 & \text{if }k\text{ odd}  \end{cases}.
\] 
\end{prop}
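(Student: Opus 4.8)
The plan is to exploit the fact that $A_N+A_N^T$ has rank at most two and, crucially, is \emph{symmetric}, which turns the computation of its $\Psi_N$-moments into a $2\times 2$ matrix recursion. Write $\mathbf 1=(1,\dots,1)^T$, $\mathbf X=(X_1,\dots,X_N)^T$, and $\mathbf y=\mathbf X+\overline{\mathbf X}$, so that $y_i=X_i+\overline{X_i}=2\operatorname{Re}(X_i)$ are i.i.d.\ centered real Gaussians with $\mathbb E(y_i^2)=2$ (hence $\mathbb E(y_i^4)=12$ and $\operatorname{Var}(y_i^2)=8$). Since $(A_N)_{ij}=\tfrac1N(X_i+\overline{X_j})$ we have $A_N=\tfrac1N(\mathbf X\mathbf 1^T+\mathbf 1\overline{\mathbf X}^T)$, hence $A_N+A_N^T=\tfrac1N(\mathbf y\mathbf 1^T+\mathbf 1\mathbf y^T)$. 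Also $C_N=\tfrac1N\mathbf 1\mathbf 1^T$, so with $u:=\mathbf 1/\sqrt N$ one has $\Psi_N(M)=\mathbb E(\operatorname{Tr}(MC_N))=\mathbb E\langle u,Mu\rangle$ for every matrix $M$; in particular $\Psi_N((A_N+A_N^T)^k)=\mathbb E\langle u,(A_N+A_N^T)^k u\rangle$. First I would dispose of the odd case: replacing $y_i$ by $-y_i$ leaves the joint law unchanged and sends $A_N+A_N^T$ to its negative, whence $\Psi_N((A_N+A_N^T)^k)=(-1)^k\Psi_N((A_N+A_N^T)^k)$, which vanishes identically for $k$ odd, giving $m_k(b)=m_k'(b)=0$.

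For $k$ even I would diagonalise on the range: $A_N+A_N^T$ vanishes on $\{\mathbf 1,\mathbf y\}^\perp$ and, in an orthonormal basis $\{u,w\}$ of $\operatorname{span}(\mathbf 1,\mathbf y)$ (almost surely two–dimensional, with $w$ the unit vector along $\mathbf y-\langle\mathbf y,u\rangle u$), it is represented by the symmetric matrix
\[
M_N=\begin{bmatrix}\frac{2T_N}{N} & \sqrt{\frac{Q_N}{N}-\frac{T_N^2}{N^2}}\\[2mm] \sqrt{\frac{Q_N}{N}-\frac{T_N^2}{N^2}} & 0\end{bmatrix},\qquad T_N:=\sum_{i=1}^N y_i,\quad Q_N:=\sum_{i=1}^N y_i^2 .
\]
Then $\Psi_N((A_N+A_N^T)^k)=\mathbb E(c_k)$ with $c_k:=(M_N^k)_{11}$. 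Writing $a:=2T_N/N$ and $b^2:=Q_N/N-T_N^2/N^2$, the $(1,1)$-entry of a power of a matrix of this shape obeys $c_0=1$, $c_1=a$, and $c_{k+1}=a\,c_k+b^2\,c_{k-1}$, so $c_k$ is a polynomial in $a$ and $b^2$, homogeneous of degree $k$ for the weights $\deg a=1$, $\deg b^2=2$; explicitly $c_k=\sum_{i\ge 0}\binom{k-i}{i}a^{k-2i}(b^2)^i$. In particular $\mathbb E(c_k)$ is an honest polynomial in $1/N$ whose coefficients are built from the joint moments of $(T_N,Q_N)$, so there is no probabilistic subtlety.

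Next I would extract the limit and the $1/N$-correction of $\mathbb E(c_k)$. Since $\mathbb E(a^{2i})=O(N^{-i})$ (because $\mathbb E(T_N^{2i})=(2i-1)!!(2N)^i$) while $b^2$ is bounded in $L^p$ and converges to $2$, only the monomials $(b^2)^{k/2}$ and $a^2(b^2)^{k/2-1}$ of $c_k$ contribute modulo $O(N^{-2})$, with coefficients $1$ and $\binom{k/2+1}{2}$ respectively. Using the Gaussian moments $\mathbb E(T_N^2)=2N$, $\mathbb E(Q_N)=2N$, $\operatorname{Var}(Q_N)=8N$ (together with $\mathbb E(T_N^4)=12N^2$, $\mathbb E(T_N^2Q_N)=4N^2+8N$ for the error bookkeeping) one finds
\[
\mathbb E\big((b^2)^{k/2}\big)=2^{k/2}+\frac{2^{k/2-2}k(k-4)}{N}+O(N^{-2}),\qquad \mathbb E\big(a^2(b^2)^{k/2-1}\big)=\frac{2^{k/2+2}}{N}+O(N^{-2}),
\]
and combining them,
\[
\mathbb E(c_k)=2^{k/2}+\frac1N\Big(2^{k/2-2}k(k-4)+\tbinom{k/2+1}{2}2^{k/2+2}\Big)+O(N^{-2})=2^{k/2}+\frac{3k^2\,2^{k/2}}{4N}+O(N^{-2}),
\]
which yields $m_k(b)=2^{k/2}$ and $m_k'(b)=\lim_N N(\Psi_N((A_N+A_N^T)^k)-m_k(b))=\tfrac34 k^2 2^{k/2}$, as claimed. (Alternatively, and in parallel with the proof of Proposition \ref{prop.antitrace.lim}, one can expand $\Psi_N((A_N+A_N^T)^k)$ over $\ker(j)\in\mathcal P(k+1)$, keep only the partitions with $|\pi|\in\{k,k+1\}$, and evaluate the resulting $E_\pi$ with $\mathbb E(y^2)=2$, $\mathbb E(y^4)=12$ via a case analysis on the parities of the positions of the unique pair; the absence of the conjugation constraint makes more $\epsilon$-vectors survive and a four-fold collision contribute $12$ instead of $2$, but the bookkeeping reproduces the same two formulas.)

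I expect the main obstacle to be the careful $1/N$-expansion in the last step. One must verify that the monomials of $c_k$ with $a$-degree $\ge 4$ genuinely enter only at order $N^{-2}$, and within the two surviving monomials one must not drop the second-order contributions coming from the fluctuation of $Q_N/N$ around $2$ and from the $-T_N^2/N^2$ piece of $b^2$: these are exactly what produce the $k(k-4)$ term, and it is their combination with the $a^2$ term (of size $\binom{k/2+1}{2}2^{k/2+2}/N$) that makes the final constant collapse to $\tfrac34 k^2$ rather than leaving a $k$-linear remainder.
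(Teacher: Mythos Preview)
Your proof is correct and takes a genuinely different route from the paper's. The paper mimics the argument of Proposition \ref{prop.antitrace.lim}: it writes the $(i,j)$-entry of $A_N+A_N^T$ as $\tfrac1N(R_i+R_j)$ with $R_i=2\operatorname{Re}(X_i)$, expands $\Psi_N((A_N+A_N^T)^k)$ as $N^{-(k+1)}\sum_{\pi\in\mathcal P(k+1)}(N)_{|\pi|}F_\pi$, keeps only $\pi$ with $|\pi|\in\{k,k+1\}$, and evaluates the surviving $F_\pi$ by a parity case analysis on the location of the unique pair (with $\mathbb E(R^2)=2$, $\mathbb E(R^4)=12$). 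This is exactly the alternative you sketch in your closing parenthesis. Your primary argument instead exploits the rank-two structure $A_N+A_N^T=\tfrac1N(\mathbf y\mathbf 1^T+\mathbf 1\mathbf y^T)$ to reduce $\Psi_N((A_N+A_N^T)^k)$ to the $(1,1)$-entry of the $k$-th power of an explicit $2\times2$ matrix, solves the resulting three-term recursion in closed form, and performs a direct $1/N$-expansion using only the first few joint moments of $(T_N,Q_N)$. What your approach buys is an exact polynomial expression in $1/N$ for every fixed $N$ and a clean identification of the two monomials $(b^2)^{k/2}$ and $a^2(b^2)^{k/2-1}$ that carry the leading and subleading terms, bypassing the combinatorial case analysis entirely. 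What the paper's approach buys is uniformity with the proof of Proposition \ref{prop.antitrace.lim} and immediate adaptability to mixed words in $A_N,A_N^T,C_N$ (needed later in the appendix), where the rank-two reduction is not available.
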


\begin{proof}
Observe that the $(i,j)$-entry of $A_N+A^T_N$ is $\tfrac{1}{N} (X_i+\bar{X}_i+X_j+\bar{X}_j) =\tfrac{1}{N} (Re(X_i)+Re(X_j))$. We will denote $R_i=2Re(X_i)$. Recall that $R_i$ is a centered real Gaussian with variance 2, so $\mathbb{E}(R_i^p)=2^{p/2}(p-1)!!$ if $p$ even and 0 otherwise. Proceeding as in Proposition \ref{prop.antitrace.lim} we get

\[
\Psi_N((A_N+A^T_N)^k)=\frac{1}{N^{k+1}}\sum_{j\in [N]^{k+1}} \sum_{\epsilon\in[0,1]^k} \mathbb{E} \left(\prod_{r=1}^{k}  R_{j(r+\epsilon(r))}  \right)=\frac{1}{N^{k+1}} \sum_{\pi\in \mathcal{P}(k+1)} (N)_{|\pi|}  F_\pi,
\]

where 
$$F_\pi:=\sum_{\epsilon\in[0,1]^k} \mathbb{E} \left( \prod_{r=1}^{k}  R_{j(r+\epsilon(r))}  \right),$$
is the value we obtain for the sum whenever $\ker(j)=\pi$. We again get that $m_k=m'_k=0$ if $k$ odd and we write $n=2l$ for the even case. For the coefficient of $N^{k+1}$ we just need to focus on $1_{k+1}$ and we directly get
$$m_{2l}=\mathbb{E}(R_{j(2)}^2)\mathbb{E}(R_{j(4)}^2)\dots \mathbb{E}(R_{j(2l)}^2)=2^l.$$

Now for the infinitesimal moment, we look for the coefficient of $N^{2l}$. For $\pi=1_{k+1}$ the coefficient of $N^{2l}$ is $-\binom{k+1}{2}2^l=-l(2l+1)2^l.$

For the case $\pi=\lambda_{x,y}$, now do not have the condition $\epsilon(r)\neq \epsilon(s)$, So there are four possibilities: 
$(r,\epsilon(r),s,\epsilon(s))=\{ (x-1,1,y,0),(x,0,y-1,1), (x,0,y,0),(x-1,1,y-1,1)\}$. 

Then we arrive to the following cases 
\begin{itemize}
    \item $x$ and $y$ are both even. There are  $\binom{l}{2}$ pairs $(x,y)$ of this form, and the vector 
    \[
    (1,0,\dots,1,0,1,1,0,\dots,1,0,0,1,0,\dots,1,0)
    \]
    will contribute with $2^l$, while $\epsilon_{\mbox{alt}}$ will contribute with $2^{l-2}\cdot (4\cdot 3)=2^l\cdot 3$ because we get a $\mathbb{E}(R_{j(x)}^2R_{j(y)}^2)$. Thus from this case we get a contribution of $2^{l+2}\frac{l^2-l}{2}=2^{l+1}(l^2-l) $.
    
    \item $x$ and $y$ are both odd. There are  $\binom{l+1}{2}$ pairs $(x,y)$ of this form, and the vector 
    \[
    (1,0,\dots,1,0,0,1,0,\dots,1,0,1,1,0,\dots,1,0)
    \]
    will contribute with $2^l$, while $\epsilon_{\mbox{alt}}$ will contribute with $2^l$. Thus we get a contribution of 
    $2^{l}(l^2+l)$.
    
    \item $x$ and $y$ have different parity. There are $l(l+1)$ pairs $(x,y)$ of this form, the corresponding vector will contribute with $2^l$, while $\epsilon_{\mbox{alt}}$ will contribute with $2^l$. Thus we get a contribution of $2^{l+1}(l^2+l)$
\end{itemize}

Summing up all contributions to the second coefficient we get the desired value 
$$-l(2l+1)2^l+2^{l+1}(l^2-l)+2^{l}(l^2+l)+2^{l+1}(l^2+l)=2^l\left( -2l^2-l+2l^2-2l+l^2+l+2l^2+2l \right) =3l^22^l.$$
\end{proof}

Now that we have a better understanding of the infinitesimal distributions of $a,b$ and $c$ we can prove the main result.

\begin{proof}[Proof of Theorem \ref{thm.1.appendix.A}]
\begin{itemize}
    \item  For the case of $a$ and $b$, we first observe that the Boolean cumulants satisfy
\[
\beta_n(a)+\beta_n(\bar{a})=2\beta_n(a)=\begin{cases} 2 & \text{if }n=2, \\ 0 & \text{otherwise,}\end{cases}
\]
so $\beta_n(a)+\beta_n(\bar{a})= \beta_n(a+\bar{a})$ and we recover the result that $a$ and $\bar{a}$ are Boolean independent. However, if $a$ and $\bar{a}$ were also infinitesimal Boolean, this would imply that the infinitesimal Boolean cumulants of $b=a+\bar{a}$ are given by
\[
\beta'_n(b)=\beta'_n(a)+\beta'_n(\bar{a})=2\beta'_n(a)=\begin{cases}2 & \text{if }n=2, \\ 6 & \text{if }n\neq 2\text{ even,} \\ 0 & \text{if }n\text{ odd.}  \end{cases}
\]
This in turn means that $\varphi'(b^2)=2$, but we already computed this value and we know that $\varphi'(b^2)=6$, thus we have a contradiction. Therefore, even though that $A_N$ and $B_N^T$ are asymptotically Boolean independent, they are not asymptotically infinitesimal Boolean independent.

\item For the case of $a$ and $c$, we will restrict our attention to computing the mixed moment $\varphi'(a^2ca^2)$. First we notice that 
\[
\Psi_N(A_N^2C_NA_N^2)=\frac{1}{N^6}\sum_{i_1,\dots,i_6\in [N]}\mathbb{E}(A_{i_1i_2}A_{i_2i_3}A_{i_4i_5}A_{i_5i_6}).
\]

And restricting to the case where at most 2 of $i_1,\dots i_6$ coincide, we get that 
\begin{align*}
\mathbb{E}(A_{i_1i_2}A_{i_2i_3}A_{i_4i_5}A_{i_5i_6})&=\mathbb{E}((X_{i_1}+\overline{X}_{i_2})(X_{i_2}+\overline{X}_{i_3})(X_{i_4}+\overline{X}_{i_5})(X_{i_5}+\overline{X}_{i_6})) \\
&= \mathbb{E}((X_{i_2}\overline{X}_{i_2}+X_{i_1}\overline{X}_{i_3}) (X_{i_5}\overline{X}_{i_5}+X_{i_4}\overline{X}_{i_6})),
\end{align*}
where the only non-vanishing cases are when $i_1=i_3$, $i_4=i_6$ or $i_2=i_5$.

We conclude that 
\[
\Psi_N(A_N^2C_NA_N^2)=\frac{N^6+(1+1+1)N^5+o(N^5)}{N^6}=1+3/N+o(1/N)
\]
Therefore, $\varphi'(a^2ca^2)=3$. If $a$ and $c$ were infinitesimally Boolean independent we would have the equality
\[
\varphi'(a^2ca^2)=\varphi'(a^2)\varphi(c)\varphi(a^2)+\varphi(a^2)\varphi'(c)\varphi(a^2)+\varphi(a^2)\varphi(c)\varphi'(a^2).
\]

But we already know that the right hand side is equal to 
\[
1\cdot 1\cdot 1+1\cdot 0\cdot 1+1\cdot 1\cdot 1=2\neq 3.
\]
Thus we get a contradiction and conclude that $a$ and $c$ cannot possibly be infinitesimally Boolean independent.
\end{itemize}

\end{proof}

\begin{rem}
In general, it can be proved that $\varphi'(a^{h_1}c^{l_1}a^{h_2}c^{l_2}\cdots a^{h_s})=0$ whenever $h_j$ is odd for some $j=1,\dots, s$. When all $h_j=2k_j$ are even we have
\begin{equation}
\label{eq.1.appendix.A}
\varphi'(a^{2k_1}c^{l_1}a^{2k_2}c^{l_2}\cdots a^{2k_s})=\binom{k}{2}+k_1^2+\dots+k_s^2,
\end{equation}
where $k:=k_1+\dots+k_s$. Whereas for the sum 
\[
S:=\sum_{j=1}^s \varphi(a^{h_1}) \varphi(c^{l_1})\dots  \varphi(c^{l_{j-1}})\varphi'(a^{h_j}) \varphi(c^{l_j})\cdots  \varphi(a^{h_s})+ \sum_{j=1}^{s-1}\varphi(a^{h_1}) \varphi(c^{l_1})\dots  \varphi(a^{h_{j}})\varphi'(c^{l_j}) \varphi(a^{h_{j+1}})\cdots  \varphi(a^{h_s})
\]

that should be equal to \eqref{eq.1.appendix.A} if $a$ and $c$ were infinitesimal Boolean independent) we get $S=0$ whenever $h_j$ is odd for some $j=1,\dots,s$. And when all $h_j=2k_J$ are even we get

\begin{equation}
S=\frac{3}{2}\left(k_1^2+\dots+k_s^2\right)-\frac{k}{2}.
\end{equation}

Thus $S$ actually coincide with \eqref{eq.1.appendix.A} when some $k_j$ is odd, but for the even case they differ by 
\[
\frac{ k_1^2+\dots+k_s^2-(k_1+\dots+k_s)^2}{2}.
\]

\end{rem}


\section{Partial Trace Models}
\label{sec:partal.trace.model}

In \cite{lenczewski2014limit}, the author introduced the partial traces. In this subsection, we will consider the simple version of the partial trace.  
First, we introduce the following notations. 

For a given $n\times n$ random matrix $A=[a_{i,j}]_{i,j=1}^n$, we write it by four block matrices:
$$
A =
\begin{bmatrix}
A_{1,1} & A_{1,2}  \\
A_{2,1} & A_{2,2} 
\end{bmatrix}
$$
where $A_{1,1}$ is $(n-1) \times (n-1)$ square matrix, $A_{1,2}$ is $(n-1)\times 1$ column matrix, $A_{2,1}$ is $1 \times (n-1)$ matrix, and $A_{2,2}=a_{n,n}$. Then we let 
$$
T_{A} =
\begin{bmatrix}
0  & A_{1,2} \\
A_{2,1} &  0 
\end{bmatrix}
$$ 
and define the \emph{partial trace} $\tau_n$ on $A$ by  
$$
\tau_n (A) = E ( a_{n,n} ).
$$

\textbf{Infinitesimal partial trace of GUE}

Suppose that $A=[a_{i,j}]_{i,j=1}^n$ is a GUE ensemble, and if we write $T_A=[c_{i,j}]_{i,j=1}^n$, then 
$c_{i,n}=a_{i,n}$ for all $1\leq i \leq n-1$, 
$c_{n,j}=a_{n,j}=\overline{a_{j,n}}$ for all $1\leq j\leq n-1$, and  
$c_{i,j}=0$ otherwise. 

Let $k\in\mathbb{N}$, we note that
$$
\tau_n(T_A^k) = \sum\limits_{i_1,\dots,i_{k-1}\in [n]} E(c_{n,i_1}c_{i_1,i_2}\cdots c_{i_{k-1},n}).
$$
It is easy to see that $c_{n,i_1}c_{i_1,i_2}\cdots c_{i_{k-1},n}=0$ expect $k$ is even and $i_2=i_4=i_6=\cdots = i_{k-2}=n$. Let $k=2m$ for some $m\in \mathbb{N}$, we have
\begin{eqnarray*}
\tau_n(T_A^k) &=&\tau_n(T_A^{2m}) \\ 
              &=& \sum\limits_{i_1,i_{2},i_{3},\dots ,i_{2m-2},i_{2m-1}\in [n]} E(c_{n,i_1}c_{i_1,i_2}\cdots c_{i_{2m-2},i_{2m-1}}c_{i_{2m-1},n}) \\
              &=& \sum\limits_{j_1,j_2,\dots,j_{m} \in [n-1]} E(c_{n,j_1}c_{j_1,n}\cdots c_{n,j_{m}}c_{j_{m},n})  \\
              &=& \frac{1}{n^{m}} \sum\limits_{j_1,j_2,\dots,j_m\in [n-1]} E(|x_{n,j_1}|^2|x_{n,j_2}|^2\cdots |x_{n,j_m}|^2)
\end{eqnarray*}
where $x_{i,j}=\sqrt{n}a_{i,j}$ for all $i,j$.  

In order to compute the limit law and infinitesimal law, we need to find the coefficient of $n^m$ and $n^{m-1}$. Hence, we shall calculate the following two terms. 
\begin{eqnarray*}
& & \sum\limits_{j_1,j_2,\dots,j_m \text{ are all different}} E(|x_{n,j_1}|^2|x_{n,j_2}|^2\cdots |x_{n,j_m}|^2) \\
&=& \sum\limits_{j_1,j_2,\dots,j_m \text{ are all different}} E(|x_{n,j_1}|^2)E(|x_{n,j_2}|^2)\cdots E(|x_{n,j_m}|^2) \\
&=& (n-1)(n-2)\cdots (n-m) \\
&=& n^{m}+\frac{-1}{2}m(m+1)n^{m-1}+O(n^{m-1}),
\end{eqnarray*}
and
\begin{eqnarray*}
& &\sum\limits_{ \text{ two of }j_1,j_2,\dots,j_m \text{ are the same}} E(|x_{n,j_1}|^2|x_{n,j_2}|^2\cdots |x_{n,j_m}|^2) \\
&=& \binom{m}{2} (n-1)\cdots (n-m+1) E(|x_{n,j_1}|^4) E(|x_{n,j_3}|^2)\cdots E(|x_{n,j_m}|^2) \\
&=& \frac{m(m-1)}{2} \Big(n^{m-1}+O(n^{m-1}) \Big) 2\\
&=& m(m-1) n^{m-1}+ O(n^{m-1}). 
\end{eqnarray*}
Hence, we have that the coefficient of $n^{m}$ is $1$, and the coefficient of $n^{m-1}$ is 
$$
\frac{-1}{2}m(m+1)+m(m-1) = \frac{1}{2}m^2-\frac{3}{2}m.
$$
Thus, we conclude that the 
$$
m_k(T_A)=\lim\limits_{n\rightarrow \infty}  \tau_n(T_A^k) = \begin{cases}
0  & \text{if }k \text{ is odd} \\
1  & \text{if }k \text{ is even} 
\end{cases},
$$
and 
$$
m'_k(T_A)=\lim\limits_{n\rightarrow \infty} n \Big( \tau_n(T_A^k)- m_k \Big) = \begin{cases}
0 & \text{if }k \text{ is odd} \\
\frac{1}{2}m^2-\frac{3}{2}m & \text{if }k \text{ is even, and } k=2m   
\end{cases}.
$$

\textbf{Infinitesimal Boolean Cumulants of $T_A+T_B$}

Now we consider two independent GUE ensembles $A$ and $B$. It is known that $T_A$ and $T_B$ are asymptotically Boolean with respect to $\tau_n$ ( see Theorem 7.1 of \cite{lenczewski2014limit} ). However, we will show that they are not asymptotically infinitesimally Boolean w.r.t $\tau_n$.
In order to do this, we shall compute the infinitesimal Boolean cumulants of $T_A$ and $T_A+T_B$. 
First, we compute the first four infinitesimal moments of $T_A+T_B$. Observe that
\begin{equation}\label{bpeqn}
\tau_n(T_A^{s_1}T_B^{t_1}T_A^{s_2}T_B^{t_2}\cdots T_A^{s_m}T_B^{t_m}) =0 
\end{equation}
whenever $s_1+\cdots +s_m$ is odd or $t_1+\cdots+t_m$ is odd. Hence, all odd moments and infinitesimal odd moments vanish. Also, 
$\tau_n(T_AT_B)=\tau_n(T_BT_A)=0$ for all $n\in\mathbb{N}$, which provides us 
$$
\tau_n ( (T_A+T_B)^2 ) = \tau_n(T_A^2)+\tau_n(T_B^2)+\tau_n(T_AT_B)+\tau_n(T_BT_A) \longrightarrow 1+1+0+0 =2 \text{ as } n\rightarrow \infty.
$$
Then,   
$$
n \Big(\tau_n( (T_A^2+T_B^2)-2 ) \Big) = n \Big( \tau_n(T_A^2)-1 \Big) + n \Big( \tau_n(T_B^2)-1 \Big) \longrightarrow -2 \text{ as } n\rightarrow \infty.
$$
For the fourth moment, by \eqref{bpeqn} and the symmetry of $A$ and $B$, we have that 
\begin{equation*}
\tau_n( (T_A+T_B)^4 ) = 2\tau_n(T_A^4) + 2\tau_n(T_A^2T_B^2) + 2\tau_n(T_AT_B^2T_A) + 2\tau_n(T_AT_BT_AT_B).
\end{equation*}
Suppose that $T_A=[\frac{1}{\sqrt{n}}x_{i,j}]$, $T_B=[\frac{1}{\sqrt{n}}y_{i,j}]$. Then 
\begin{equation*}
\tau_n(T_AT_BT_AT_B) = \frac{1}{n^2} \sum\limits_{i_1,i_2\in [n-1]} E(x_{n,i_1}y_{i_1,n}x_{n,i_2}y_{i_2,n}) 
                     = \frac{1}{n^2} \sum\limits_{i_1,i_2\in [n-1]} E(x_{n,i_1}x_{n,i_2})E(y_{i_1,n}y_{i_2,n}) 
                     = 0
\end{equation*}
and 
\begin{eqnarray*}
\tau_n(T_A^2T_B^2) &=& \frac{1}{n^2}\sum\limits_{i_1,i_2\in [n-1]} E(x_{n,i_1}x_{i_1,n}y_{n,i_2}y_{i_2,n}) 
                   =\frac{1}{n^2}\sum\limits_{i_1,i_2\in [n-1]} E(|x_{n,i_1}|^2) E(|y_{n,i_2}|^2) \\
                   &=& \frac{1}{n^2}\sum\limits_{i_1,i_2\in [n-1]}1 
                   = (1-\frac{1}{n})^2=1-\frac{2}{n}+\frac{1}{n^2}.
\end{eqnarray*}
In addition,
\begin{eqnarray*}
\tau_n(T_AT_B^2T_A)&=& \frac{1}{n^2}\sum\limits_{i_1,i_2\in [n-1]} E(x_{n,i_1}y_{i_1,n}y_{n,i_2}x_{i_2,n}) 
                   =\frac{1}{n^2}\sum\limits_{i_1,i_2\in [n-1]} E(x_{n,i_1}x_{i_2n})E(y_{i_1,n}y_{n,i_2})\\ 
                   &=& \frac{1}{n^2}\sum\limits_{i\in [n-1]}1 
                   = \frac{1}{n}-\frac{1}{n^2}.
\end{eqnarray*}
Thus, we conclude that 
$$
\tau_n((T_A+T_B)^4) = 4 +\frac{-4}{n}+O(\frac{1}{n}),
$$
which implies that 
$$
\lim\limits_{n\rightarrow \infty} \tau_n((T_A+T_B)^4) =4 \text{ and } \lim\limits_{n\rightarrow \infty} n\Big( \tau_n((T_A+T_B)^4-4) \Big) =-4.
$$
Then, we summarize the first fourth limit laws and limit infinitesimal laws of $T_A+T_B$ as follows.
$$
m_k(T_A+T_B) = \begin{cases}
0 & \text{ if }k=1,3 \\
2 & \text{ if }k=2 \\
4 & \text{ if }k=4
\end{cases}
\text{ and } 
m'_k(T_A+T_B) = \begin{cases}
0 & \text{ if }k=1,3 \\
-2 & \text{ if }k=2 \\
-4 & \text{ if } k=4
\end{cases}.
$$
Note that the corresponding first four Boolean cumulants and infinitesimal Boolean cumulmants of $T_A$ are
\begin{eqnarray*}
b_1(T_A) &=& m_1(T_A) = 0 ; \\
b_2(T_A) &=& m_2(T_A)-m_1(T_A)^2 = 1 ;\\
b_3(T_A) &=& m_3(T_A)-2m_2(T_A)m_1(T_A)-m_1^3(T_A) = 0 ; \\
b_4(T_A) &=& m_4(T_A)-2m_3(T_A)m_1(T_A)-m^2_2(T_A)-3m_2(T_A)m_1^2(T_A)-m_1^4(T_A)=0,
\end{eqnarray*}
and 
\begin{eqnarray*}
b_1'(T_A) &=& m_1'(T_A) = 0 ; \\
b_2'(T_A) &=& m_2'(T_A)-2m_1(T_A)m_1'(T_A) = -1 ;\\
b_3'(T_A) &=& m_3'(T_A)-2(m_2'(T_A)m_1(T_A)+m_2(T_A)m_1'(T_A))-3m_1^2(T_A)m_1'(T_A) = 0 ; \\
b_4'(T_A) &=& m_4'(T_A)-2(m_3'(T_A)m_1(T_A)+m_3(T_A)m'_1(T_A))-2m_2(T_A)m_2'(T_A)\\
          & & -3(m_2'(T_A)m_1^2(T_A)+2m_2(T_A)m_1(T_A)m_1'(T_A))-4m_1^3(T_A)m_1'(T_A)=1.
\end{eqnarray*}
On the other hand, the first four Boolean cumulants and infinitesimal Boolean cumulmants of $T_A+T_B$ are
\begin{eqnarray*}
b_1(T_A+T_B) &=& m_1(T_A+T_B) = 0 ; \\
b_2(T_A+T_B) &=& m_2(T_A+T_B)-m_1(T_A+T_B)^2 = 2 ;\\
b_3(T_A+T_B) &=& m_3(T_A+T_B)-2m_2(T_A+T_B)m_1(T_A+T_B)-m_1^3(T_A+T_B)=0 ; \\
b_4(T_A+T_B) &=& m_4(T_A+T_B)-2m_3(T_A+T_B)m_1(T_A+T_B)-m^2_2(T_A+T_B) \\
             & & -3m_2(T_A+T_B)m_1^2(T_A+T_B)-m_1^4(T_A+T_B)=0,
\end{eqnarray*}
and also
\begin{eqnarray*}
b_1'(T_A+T_B) &=& m_1'(T_A+T_B) = 0 ; \\
b_2'(T_A+T_B) &=& m_2'(T_A+T_B)-2m_1(T_A+T_B)m_1'(T_A+T_B) = -2 ;\\
b_3'(T_A+T_B) &=& m_3'(T_A+T_B)-2(m_2'(T_A+T_B)m_1(T_A+T_B)+m_2(T_A+T_B)m_1'(T_A+T_B)) \\
              & & -3m_1^2(T_A+T_B)m_1'(T_A+T_B) = 0 ; \\
b_4'(T_A+T_B) &=& m_4'(T_A+T_B)-2(m_3'(T_A+T_B)m_1(T_A+T_B)+m_3(T_A+T_B)m'_1(T_A+T_B)) \\
              & & -3(m_2'(T_A+T_B)m_1^2(T_A+T_B)+2m_2(T_A+T_B)m_1(T_A+T_B)m_1'(T_A+T_B)) \\
              & & -2m_2(T_A+T_B)m_2'(T_A+T_B)-4m_1^3(T_A+T_B)m_1'(T_A+T_B)=4.
\end{eqnarray*}
From the above cumulants computation, we can see that for $k=1,\dots,4$, 
$$
b_k(T_A+T_B) = b_k(T_A)+b_k(T_B).
$$
However, 
$$b'_4(T_A+T_B)=4 \neq 2 = b'_4(T_A)+b'_4(T_B),$$ which implies that 
$T_A$ and $T_B$ are not asymptotically infinitesimally Boolean w.r.t partial trace $\tau_n$.

\bibliographystyle{abbrv}
\bibliography{main.bib}

\end{document}